\newcommand{\thick}{\mbox{\rm thk}}
\renewcommand{\th}[2]{\theta^{(#1)}_{#2}}
\newcommand{\ho}[1]{^{(#1)}}
\newcommand{\Aa}{{\mathcal A}}
\newcommand{\pmax}{\pi}
\newcommand{\pmi}{\pmax^{-1}}
\newcommand{\im}{\mbox{\rm im}}
\numberwithin{equation}{section}
\numberwithin{figure}{section}
\newtheorem{thm}{Theorem}[section]
\newtheorem{cor}[thm]{Corollary}
\newtheorem{lem}[thm]{Lemma}
\newtheorem{prop}[thm]{Proposition}
\theoremstyle{definition}
\newtheorem{definition}[thm]{Definition}
\newtheorem{remark}[thm]{Remark}
\newtheorem{myexp}{Example}
\newcounter{paranum}[section]
\newcommand{\mc}{\mathcal}
\newcommand{\w}{\omega}
\renewcommand{\:}{\colon}
\newcommand{\om}{\omega}
\newcommand{\db}[1]{\bar{#1}}
\renewcommand{\subset}{\ssq}
\newcommand{\ssq}{\ensuremath{\subseteq}}
\newcommand{\eps}{\ensuremath{\varepsilon}}
\newcommand{\inte}{\ensuremath{\mathrm{int}}}
\newcommand{\alphlist}{\begin{list}{(\alph{enumi})}{\usecounter{enumi}\setlength{\parsep}{2pt}
      \setlength{\itemsep}{1pt} \setlength{\topsep}{5pt}
      \setlength{\partopsep}{3pt}}}
\newcommand{\arablist}{\begin{list}{(\arabic{enumi})}{\usecounter{enumi}\setlength{\parsep}{2pt}
          \setlength{\itemsep}{1pt} \setlength{\topsep}{5pt}
          \setlength{\partopsep}{3pt}}}
\newcommand{\romanlist}{\begin{list}{(\roman{enumi})}{\usecounter{enumi}\setlength{\parsep}{2pt}
              \setlength{\itemsep}{1pt} \setlength{\topsep}{5pt}
              \setlength{\partopsep}{3pt}}}
\newcommand{\Romanlist}{\begin{list}{(\Roman{enumi})}{\usecounter{enumi}\setlength{\parsep}{2pt}
              \setlength{\itemsep}{1pt} \setlength{\topsep}{5pt}
              \setlength{\partopsep}{3pt}}}
\newcommand{\bulletlist}{\begin{list}{$\bullet$}{\setlength{\parsep}{2pt}
                \setlength{\itemsep}{1pt} \setlength{\topsep}{5pt}
                \setlength{\partopsep}{3pt}\setlength{\leftmargin}{15pt}}} 
\newcommand{\Alphlist}{\begin{list}{(\Alph{enumi})}{\usecounter{enumi}\setlength{\parsep}{2pt}
      \setlength{\itemsep}{1pt} \setlength{\topsep}{5pt}
      \setlength{\partopsep}{3pt}}}
 \newcommand{\listend}{\end{list}}
\newcommand{\N}{\ensuremath{\mathbb{N}}} 
\newcommand{\R}{\ensuremath{\mathbb{R}}}
\newcommand{\Z}{\ensuremath{\mathbb{Z}}}
\begin{document}

\title{Tame or wild Toeplitz shifts} \author{}

\pagestyle{plain}

\author{Gabriel Fuhrmann}
\address{Department of Mathematics, Imperial College London,  United Kingdom.
	}
\email{gabriel.fuhrmann@imperial.ac.uk}

\author{Johannes Kellendonk}
\address{
	 Institut Camille Jordan, Universit\'{e} Lyon~1, France
	}
\email{kellendonk@math.univ-lyon1.fr}

\author{Reem Yassawi}
\address{ School of Mathematics and Statistics, The Open University,  United Kingdom.
}
\email{reem.yassawi@open.ac.uk}

\subjclass[2010]{ 37B10, 54H15, 20M20}

\date{\today}

\begin{abstract} 
We investigate tameness of Toeplitz shifts. 
By introducing the notion of extended Bratteli-Vershik diagrams, 
we show that such shifts with finite Toeplitz rank 
are tame if and only if there are at most countably many orbits of singular fibres over the maximal equicontinuous factor.
The ideas are illustrated using the class of substitution shifts.
A body of elaborate examples shows that the assumptions of our results cannot be relaxed.
\end{abstract}


 \maketitle

\section{Introduction}

Tameness is known for its many facets, 
related  to deep theorems in topology, Banach space theory and model theory, such as Rosenthal's $\ell^1$-embedding theorem or the BFT-dichotomy.
It was introduced to dynamics by K\"ohler, under the name \emph{regularity} \cite{Kohler1995}, and the joint efforts
of the community helped to shed light on general structural properties of tame dynamical systems
\cite{Glasner2006,GlasnerMegrelishvili2006,KerrLi2007,Huang2006,Glasner2018,FuhrmannGlasnerJagerOertel2018}, see
also \cite{GlasnerMegrelishvili2018} for an exposition and numerous further references. The opposite of tame is wild, but with mathematical modesty and to not overuse the word wild, the community simply calls wild dynamical systems  non-tame. 
One fascinating phenomenon is that non-tame systems are not just not tame,
but qualitatively very far from tame systems.
This is most visibly reflected in the following characterisation: 
A $\Z$-action defined by a homeomorphism $T$ on a compact metrisable space $X$ is {\em tame} if 
the cardinality of its Ellis semigroup $E(X,T)$ is at most that of the continuum $c$, {whereas if it is non-tame, 
$E(X,T)$ must contain a copy of $\beta \N$.}

To put our results in context, it is important to say a few words about the relation between 
tameness and almost automorphy---a classical notion extensively studied by Veech \cite{Veech1965}.
Huang realised that tame minimal $\Z$-actions are
necessarily uniquely ergodic and \emph{almost automorphic}
\cite{Huang2006}, that is, the maximal equicontinuous factor $\mathcal Z$ contains a point which has 
a unique pre-image under the factor map $\pi:X\to \mc Z$. 
Such a point is called {\em regular} and correspondingly, points $z\in \mc Z$ for  which $\pi^{-1}(z)$ consists of  more than one element are called {\em singular}.
Recently, Glasner extended Huang's result to minimal actions of general groups possessing
an invariant measure \cite{Glasner2018}.
Based on these results, it was proved in \cite{FuhrmannGlasnerJagerOertel2018} 
that tame minimal systems are actually regularly almost automorphic, which means that the set of regular points has full (Haar) measure.

The vanishing, in measure, of the singular points  is thus a necessary condition for tameness, but it is far from being sufficient. For shift dynamical systems, the set of singular points is the union of the $\Z$-orbits of its {\em discontinuity points}: these are  the points in the maximal equicontinuous factor whose fibre contains two points which disagree on their $0$-coordinate. 
A binary almost automorphic shift whose maximal equicontinuous factor is an irrational rotation on the circle is non-tame if its set of discontinuity points
is a Cantor set \cite[Proposition~3.3]{FuhrmannGlasnerJagerOertel2018}.
Note that among such systems, one easily finds examples for which the set of singular points is a  zero measure set.
Furthermore, it is possible to construct non-tame almost automorphic systems 
for which the set of singular points consists of a
single orbit \cite{FuhrmannKwietniak2020}. 
However, in this case, the pre-image of a singular point under the factor map has to be uncountable.
These results suggest that non-tameness is related to 
the smallness of the difference between $X$ and its maximal equicontinuous factor, where smallness is computed either via a measure or via cardinality, either in $X$ or its maximal equicontinuous factor.
Our results largely affirm this suggestion but emphasize that this relation is generally speaking more subtle.

We investigate the notion of tameness for the class of Toeplitz shifts,  which are almost automorphic extensions of
 odometers (procyclic group rotations). 
In this class, Oxtoby found a first example of a minimal system which is not uniquely ergodic \cite{Oxtoby:1952}.
 Jacobs and Keane defined and studied Toeplitz shifts systematically in \cite{Jacobs-Keane}, recognising their close relation to Toeplitz's constructions in \cite{Toeplitz:1928}. Toeplitz shifts  have  since enjoyed ample attention due to their dynamical diversity
and their relevance in measurable and topological dynamics, see e.g.\ \cite{MarkleyPaul1979,Williams,Downarowicz1991,BulatekKwiatowski1992,Iwanik1996,DownarowiczLacroix1998,GjerdeJohansen,DownarowiczSerafin2003,DownarowiczKasjan2015,BaakeJaegerLenz2016} as well as \cite{Downarowicz2005} for a survey and further references.   
Gjerde and Johansen \cite{GjerdeJohansen}  represent Toeplitz shifts as   Bratteli-Vershik systems where the Bartlett diagrams are appropriately constrained, and it is this representation that is particularly useful to us.
The Toeplitz shift has {\em  finite Toeplitz rank} if it has such a representation for which  the number of vertices at each level  of the Bartlett diagram is uniformly bounded. 
 We show

\begin{thm}\label{thm: A}
Let $(X, T)$ be a Toeplitz shift of finite Toeplitz rank.
Then $(X,T)$ is tame if and only if its maximal
equicontinuous factor has only countably many singular points.
\end{thm}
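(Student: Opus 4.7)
The plan is to analyze the Ellis semigroup $E(X,T)$ via the factor map $\pi\colon X\to \mathcal{Z}$ onto the odometer, using the extended Bratteli-Vershik representation introduced in the paper as the combinatorial backbone. Because $\mathcal{Z}$ is equicontinuous, the induced semigroup homomorphism $E(X,T)\to E(\mathcal{Z})\cong \mathcal{Z}$ exhibits each $p\in E(X,T)$ as a pair $(z,\phi)$, where $z\in\mathcal{Z}$ records the translation and $\phi$ encodes how $p$ acts within each fibre. Over regular $w\in\mathcal{Z}$ the fibre is a singleton and $\phi$ carries no information, so all non-trivial data of $\phi$ is supported on the singular set; moreover the intertwining $p\circ T=T\circ p$ forces the restriction of $\phi$ to any $\mathcal{Z}$-orbit to be determined by a single representative fibre.

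For the \emph{if} direction, finite Toeplitz rank uniformly bounds fibre cardinality by some constant $r$ (a standard consequence of the Gjerde-Johansen representation). If there are only countably many singular orbits, then $\phi$ is prescribed by a choice of map between two $r$-point sets on each orbit, yielding at most $(r^{r})^{\aleph_0}=c$ possibilities for $\phi$. Combined with $|\mathcal{Z}|=c$ possibilities for $z$, we obtain $|E(X,T)|\le c$, hence $(X,T)$ is tame.

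For the \emph{only if} direction I would argue by contraposition and invoke the Kerr-Li combinatorial characterisation of non-tameness: if uncountably many singular orbits exist, one must construct an independent sequence of clopen subsets of $X$, equivalently an embedding of $\beta\mathbb{N}$ into $E(X,T)$. The extended Bratteli-Vershik diagrams encode each singular point of $\mathcal{Z}$ through the finite levels at which non-trivial fibre splittings become visible. Finite Toeplitz rank then forces any uncountable collection of singular orbits to realise infinitely many independent splittings, rather than clustering within a single finite-rank configuration. A diagonal selection through the levels of the diagram then produces cylinders $A_n$ whose intersections realise every subset of $\mathbb{N}$, yielding the required $\ell^1$-type independence.

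The main obstacle lies in this only-if direction, specifically the passage from \emph{uncountably many singular orbits} to \emph{infinitely many independent splittings}. A priori the uncountable collection of singular orbits could be packed along a single coherent branch of the diagram, so that all the choice data is encoded by one continuum-sized set with no combinatorial independence. Excluding this scenario is where finite Toeplitz rank and the fine structure of the extended Bratteli-Vershik diagrams must work together: finite rank sharply limits how singular behaviour can propagate across levels, ensuring that uncountability cannot be absorbed into a single branch and hence must produce genuine independence suitable for the Kerr-Li criterion. Carrying this step out rigorously is, in my view, the hardest part of the argument and is precisely what the extended Bratteli-Vershik formalism of the paper is designed to support.
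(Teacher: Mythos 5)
Your ``if'' direction is essentially correct and takes a genuinely different route from the paper. The paper proves tameness by counting the choice functions that can be realised along a putative independence sequence (Proposition~\ref{prop: continuity points don't matter} together with Corollary~\ref{cor: sufficient criterion for tameness}), whereas you bound the cardinality of the Ellis semigroup directly: $\pi$ induces a homomorphism $E(X,T)\to E(\mc Z)\cong\mc Z$, and any $p\in E(X,T)$ with image $z$ is determined by $z$ together with its restrictions to the fibres $\pi^{-1}(w)$ for which $w+z$ is singular (note it is the \emph{target} fibre, not the source, that must be singular for $p|_{\pi^{-1}(w)}$ to carry information --- a harmless correction to your phrasing). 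Since there are countably many such $w$ by hypothesis and, by Lemma~\ref{lem-x}, every fibre has at most $r$ points where $r$ is the Toeplitz rank, one gets at most $c\cdot(r^r)^{\aleph_0}=c$ elements, and tameness follows from the cardinality dichotomy quoted in the introduction. This is a clean alternative argument for that implication, and both routes use the same finite-rank bound on fibres.

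The ``only if'' direction, however, is a statement of intent rather than a proof, and it is exactly where the substance of the theorem lies. To apply the Kerr--Li criterion you must exhibit an infinite independence set for one \emph{fixed} pair of disjoint clopen sets; producing varying cylinders $A_n$ whose intersections realise all subsets of $\mathbb{N}$ is not the right statement. The paper's argument first disposes of the case where the singular set has positive Haar measure via \cite[Theorem~1.2]{FuhrmannGlasnerJagerOertel2018}; in the measure-zero case it extracts, via Cantor--Bendixson and a pigeonhole argument over the boundedly many vertices per level, a \emph{double path} of maximal essential thickness (Lemma~\ref{lem:thick-double-path}, Corollary~\ref{thick-double-path}); and then, through a delicate telescoping procedure using Lemma~\ref{lem-density} and Corollary~\ref{cor-sand}, arranges an arithmetic progression of order labels $j_0^{(n)},j_1^{(n)},j_2^{(n)}$ such that $\theta^{(n)}_{j_1^{(n)}}$ and $\theta^{(n)}_{j_2^{(n)}}$ restrict to the \emph{same} bijection of $A_{n+1}$ onto $A_n$ while $\theta^{(n)}_{j_0^{(n)}}$ has proper image, together with auxiliary labels $i_n$ landing outside $B_{n-1}$ (Proposition~\ref{arithmetic-progression-prop}); only with all of this in hand can the explicit times $t_n$ and the fixed cylinder pair $[B]$ and its complement in $[A_1]$ be written down (Theorem~\ref{Toeplitz non-tame}). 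None of this is supplied, or even sketched at the level of a workable mechanism, in your proposal; moreover your description of the obstacle is somewhat inverted: an uncountable family of singular points concentrated ``along a single coherent branch'' is precisely the favourable configuration (a double path) that one wants to extract, and the genuine difficulties are the equality of the two induced bijections, the arithmetic-progression structure keeping the independence times coherent, and the Haar-measure dichotomy. As you acknowledge, this is the hardest step, and since it is missing the proposal does not establish the theorem.
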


As a corollary to Theorem~\ref{thm: A}, we get the following necessary and sufficient criterion for tameness of substitution shifts of constant length. 
Note that in this case, the property of having only finitely many orbits of singular points can be easily read off from an associated graph
introduced in \cite{CovenQuasYassawi2016}.
We elaborate on this in the main body of this work.
\begin{thm}\label{thm: B}
Let $\theta$ be a primitive aperiodic substitution of constant length. 
The associated shift $(X_\theta, T)$ is tame if and only if 
$\theta$ has a coincidence and the maximal equicontinuous factor contains only finitely many orbits of singular points.
\end{thm}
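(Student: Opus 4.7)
The plan is to deduce Theorem~\ref{thm: B} from Theorem~\ref{thm: A} by exploiting the additional combinatorial rigidity of primitive aperiodic constant length substitutions. The first step is to identify which such substitutions produce Toeplitz shifts of finite Toeplitz rank. Classical work of Dekking on constant length substitutions shows that a primitive aperiodic $\theta$ has a coincidence if and only if $(X_\theta,T)$ is almost automorphic over its maximal equicontinuous factor, and precisely in this case the Gjerde-Johansen construction produces a Bratteli-Vershik representation whose vertex set at each level has cardinality bounded by $|A|$. Thus ``$\theta$ has a coincidence'' is equivalent to ``$(X_\theta,T)$ is a Toeplitz shift of finite Toeplitz rank''.

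For the only-if direction of Theorem~\ref{thm: B}, suppose $(X_\theta,T)$ is tame. Huang's theorem recalled in the introduction forces almost automorphy, which by the previous step yields a coincidence and brings Theorem~\ref{thm: A} to bear, so there are at most countably many singular points. To upgrade this to finitely many \emph{orbits}, I would use the graph of~\cite{CovenQuasYassawi2016}: the singular points in the $q$-adic odometer are in bijection with those infinite one-sided walks in this finite graph that avoid all ``coincidence edges,'' and two singular points lie in a common $T$-orbit precisely when the associated walks eventually agree. A self-similarity argument, exploiting the canonical $q$-to-$1$ self-map on the set of singular points that lifts to the walk encoding, shows that the set of such walks is either finite or uncountable; hence countability forces finitely many walks, and so finitely many $T$-orbits of singular points.

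The if-direction is immediate: a coincidence delivers the Toeplitz shift structure of finite Toeplitz rank, and finitely many orbits of singular points trivially yields countably many singular points, so Theorem~\ref{thm: A} gives tameness.

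The main obstacle, and the only substantial piece not already encapsulated by Theorem~\ref{thm: A}, is the implication ``countably many singular points $\Rightarrow$ finitely many orbits''. Making this rigorous requires carefully translating between the walk combinatorics on the CQY graph and the $T$-orbit structure on the odometer, and then invoking the self-similarity induced by the substitution to obtain the finite-or-uncountable dichotomy for the bad-walk system. Once this dichotomy is in place, the remainder of the argument is a routine assembly of the results cited above.
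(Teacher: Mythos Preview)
Your proposal is correct and follows essentially the same route as the paper. The paper assembles exactly the ingredients you list---Dekking's coincidence criterion, Huang's almost-automorphy result, finite Toeplitz rank for constant length substitutions via the Gjerde--Johansen representation, Theorem~\ref{thm: A}, and the finite-or-uncountable dichotomy for singular orbits (Lemma~\ref{lem:finite-or-infinite}); the only cosmetic difference is that the paper proves the dichotomy by a direct cycle argument on the finite graph $\mathcal{G}_{\theta'}$ rather than phrasing it as a self-similarity argument.
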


Considering the statement of Theorem \ref{thm: A}, it would be tempting to guess that 
the presence of uncountably many singular fibres implies non-tameness. But
it turns out that in spite of Theorem~\ref{thm: A}, neither the cardinality of the singular points nor that of the fibres decide whether $(X,T)$ is tame or otherwise. We show
\begin{thm}\label{thm: C}
 There is a tame binary Toeplitz shift whose set of discontinuity points 
  is a Cantor set and so its maximal equicontinuous factor has uncountably many singular points. Moreover, there exist tame as well as non-tame binary Toeplitz shifts 
 with a unique singular orbit whose fibres are uncountable.
\end{thm}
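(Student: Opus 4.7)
The plan is to construct three binary Toeplitz shifts realising the claimed behaviours. Since Theorem~\ref{thm: A} ensures that any tame counterexample with uncountably many singular points must have infinite Toeplitz rank, and since finite Toeplitz rank additionally bounds the cardinality of every fibre of $\pi$, all three examples must be of infinite Toeplitz rank. I therefore build them through extended Bratteli-Vershik diagrams whose width grows without bound, which provides enough flexibility to tailor the fibre structure at will.

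For the tame binary Toeplitz shift with a Cantor set of discontinuity points, I would prescribe a Toeplitz sequence $x\in\{0,1\}^{\ZM}$ via a nested sequence of periods $(p_n)$, filling in positions modulo $p_n$ so that the set of positions remaining undecided at each stage organises itself along a dyadic Cantor pattern in the odometer $\mc Z$. The resulting set of discontinuity points is then readily seen to be a Cantor set. Tameness is to be established by arranging each singular fibre to contain exactly two elements and then exploiting the uniformity of the construction to show that every element of $E(X,T)$ is determined by a single coherent section over the Cantor discontinuity set, giving $|E(X,T)|\le c$ and hence tameness.

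For the examples with a single singular orbit whose fibres are uncountable, I would concentrate all non-injectivity of $\pi$ on one distinguished $\ZM$-orbit of $\mc Z$ by choosing the extended Bratteli-Vershik diagram so that ambiguity in filling positions emanates from a unique base point; the fibre over each point of this orbit is then parametrised by a Cantor-type tree of asymptotic binary choices. In the tame variant, the branching is to be coded along a tower-like, uniformly structured decomposition so that every pointwise accumulation of $\{T^{n_k}\}$ is a function of Baire class one in the sense of the Bourgain--Fremlin--Talagrand theorem, which yields tameness. In the non-tame variant one instead encodes, in the sense of Kerr--Li, an independent sequence of clopen partitions into the tree of choices, thereby producing a copy of $\beta\NM$ inside $E(X,T)$.

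The technical heart of the argument lies in verifying tameness, and the failure thereof, in the uncountable-fibre cases. Once the combinatorial skeleton is fixed, one must track the pointwise limits of $\{T^{n_k}\}$ across the entire Cantor-sized fibre simultaneously: tameness requires these limits to remain of continuum cardinality despite the branching, while non-tameness requires producing a genuine independent sequence that persists through the Toeplitz rigidity forcing periodic return to every window. Designing a Bratteli-Vershik labelling that simultaneously realises the Toeplitz property, localises the singular points as prescribed, and either admits or forbids such an independent sequence is the main combinatorial task of each construction.
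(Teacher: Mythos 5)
Your proposal correctly observes that all three examples must have infinite Toeplitz rank, but beyond that it is a plan rather than a proof: at every point where the real work lies you restate what tameness means (a cardinality bound on $E(X,T)$, Baire-class-one limits via BFT, a Kerr--Li independence sequence) instead of supplying an argument. The most serious gap is in the first construction. Organising the undecided positions of a Toeplitz sequence ``along a dyadic Cantor pattern'' and making every singular fibre two-to-one does not yield tameness, and your proposed justification --- that uniformity forces each element of $E(X,T)$ to be determined by a coherent section over the discontinuity set, whence $|E(X,T)|\le c$ --- is not a valid principle: the substitution of Example~\ref{ex:examplezero} gives a Toeplitz shift all of whose singular fibres have exactly two points and which has uncountably many singular points, yet it is non-tame, so its Ellis semigroup contains a copy of $\beta\N$. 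Tameness in the presence of a Cantor set of discontinuities hinges on very special combinatorial properties of the discontinuity set itself; in the paper's construction (Section~\ref{sec: a tame toeplitz shift with uncountably many singular fibres}) these are that every $\Z$-orbit meets $D$ at most once, that $(D+z+t)\cap D$ is essentially a single point for each $z$ (Lemma~\ref{lem: D+z is disjoint from D}), and that each length admits a unique left-special head (Proposition~\ref{prop: essential properties of D}). The semicocycle is then defined through the parity of the maximal head-overlap with $D$, and tameness is proved by first reducing, via Theorem~\ref{thm-reduction}, to independence sequences converging to $0$ whose choice functions are realised inside $D$, and then showing (Proposition~\ref{lem-head1}) that for $t_1<t_2$ with $M(t_1)<m(t_2)$ one of the four choice patterns cannot be realised. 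None of this is visible in, or replaceable by, an appeal to ``uniformity of the construction.''

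For the unique-singular-orbit examples, the non-tame variant you sketch is in the right spirit (it is essentially the construction of Fuhrmann--Kwietniak), but the tame variant again carries no argument: asserting that all pointwise limits of $\{T^{n_k}\}$ are of Baire class one is the statement to be proved, not a method for proving it. The paper's route is to embed an arbitrary right-extendable binary language $\mc L$ into the fibre over $0$ along a fixed sequence of times $(t_n)$ (Proposition~\ref{prop-G2}), to show that the resulting shift is always backward tame, and that it is forward tame if and only if $(X_{\mc L},\sigma)$ is tame (Theorem~\ref{thm: tame in forward but non-tame in general}, whose ``only if'' direction again uses the Section~\ref{sec: criteria for tameness of almost automorphic shifts} reduction results together with Corollary~\ref{cor: sufficient criterion for tameness}); choosing $\mc L$ Sturmian versus the full binary language then produces the tame and the non-tame example, respectively. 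Your proposal needs an analogous transfer mechanism, or some other concrete and verifiable criterion, to decide tameness of a shift with a single uncountable singular fibre; as written, that step is missing.
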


The first example shows that the aforementioned sufficient criterion for non-tameness of almost automorphic shifts over irrational rotations of the circle, namely that the set of {\em discontinuity points} 
is a Cantor set, does not generalise when we replace the circle by a totally disconnected set.
The proof of the second part of Theorem \ref{thm: C} is based on a slightly refined version
of the constructions carried out in \cite{FuhrmannKwietniak2020}, where the possible non-tameness of systems with a unique singular
orbit was already observed.
Somewhat surprisingly, our constructions allow us to
deduce the fact that it is possible for a 
minimal system $(X,T)$ to be non-tame even it is forward tame, that is, 
the corresponding forward motion is tame (see Section~\ref{sec: basic notation} for definitions).

In light of Theorem \ref{thm: C}, it is not straightforward to identify a property that implies  non-tameness for Toeplitz shifts. 
Whilst  stipulating that a Cantor set of singular fibres exists is not sufficient, additionally controlling the points in such a set of fibres does the trick. 
This control is granted by a property we refer to as {\em thickness}.
Its definition necessitates extending the Bratteli Vershik diagrams associated to Toeplitz shifts, see Definitions \ref{extended-diagram-definition}  and \ref{def:thick}.
Our main result, from which
Theorems \ref{thm: A} and \ref{thm: B} follow, is

\begin{thm}\label{thm:0}
Every thick Toeplitz shift
 is non-tame.
\end{thm}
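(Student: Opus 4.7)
The plan is to prove non-tameness by exhibiting a clopen set $C \subseteq X$ together with an infinite sequence $(n_k)_k \subseteq \mathbb{Z}$ such that the family $(T^{-n_k} C)_k$ is \emph{independent}: writing $C_0 = C$ and $C_1 = X \setminus C$, we require that for every finite $S \subseteq \mathbb{N}$ and every $\varepsilon \colon S \to \{0,1\}$,
\[ \bigcap_{k \in S} T^{-n_k} C_{\varepsilon(k)} \neq \emptyset. \]
By the Rosenthal/Bourgain-Fremlin-Talagrand dichotomy applied via K\"ohler's characterisation of tameness, such an independent family forces the sequence $(1_C \circ T^{n_k})_k$ to span a copy of $\ell^1$ in $C(X)$, which in turn implies that $E(X,T)$ contains a copy of $\beta\mathbb{N}$ and hence that $(X,T)$ is non-tame.

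To produce the independent family, I would work with a thick extended Bratteli--Vershik representation of $(X,T)$, as provided by Definitions~\ref{extended-diagram-definition} and~\ref{def:thick}. Thickness, heuristically, asserts that at infinitely many levels a robust branching of singular fibres persists. The first step is to extract from this an increasing sequence of levels $(\ell_k)_k$ together with, at each $\ell_k$, a pair of incompatible local configurations sharing a common thick vertex whose extensions to infinite paths give rise to points of $X$ that differ at a prescribed symbolic coordinate $n_k$. Taking $C$ to be a fixed letter cylinder separating these two extensions, each translate $T^{-n_k} C$ then records the binary choice made in the window around level $\ell_k$.

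Independence is realised by a path-concatenation argument inside the extended diagram. Given a finite $S$ and an assignment $\varepsilon \colon S \to \{0,1\}$, I would build a single infinite path whose segment in the window surrounding level $\ell_k$ selects the configuration labelled $\varepsilon(k)$ for every $k \in S$. The combinatorial content of thickness is precisely that such local selections can be exercised freely and simultaneously without obstruction from neighbouring windows, so the concatenated path is admissible and corresponds to a point $x \in X$ satisfying $T^{n_k} x \in C_{\varepsilon(k)}$ for every $k \in S$. This yields the desired independence.

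The main obstacle is the conversion from diagram-level combinatorics to shift-level coordinates. A local choice made at level $\ell_k$ of the path must, via the Vershik map, be converted into the value of the specific coordinate $n_k$ in the orbit of $x$ under $T$; one must ensure that the $n_k$ can be taken distinct, that a single cylinder $C$ separates the two configurations uniformly in $k$, and, most delicately, that the diagram-level independence genuinely lifts to independence of the clopen sets $T^{-n_k} C$ in $X$ (rather than merely to inequivalence of a few chosen paths). This alignment step is where the precise definition of thickness will do the real work; once it is executed, the standard $\ell^1$-independence argument delivers non-tameness.
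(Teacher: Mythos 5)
Your general framing is fine --- the paper also establishes non-tameness by producing an infinite independence sequence for a pair of disjoint cylinder sets (its Proposition~\ref{prop: independence implies non-tame}) rather than by touching the Ellis semigroup directly --- but everything of substance lies in the step you defer with ``this alignment step is where the precise definition of thickness will do the real work'', and as sketched that step would not go through. The path-concatenation picture fails for a concrete reason: the two members of a parallel edge carry \emph{different} order labels, so switching the local configuration in the window around level $\ell_k$ changes the image of the path under the edge order map, i.e.\ it changes the corresponding point of the odometer. By Lemma~\ref{lem-b}, the offset $z^{(n)}=\sum_{k<n}\omega(\gamma_k)\ell^{(k)}$ which aligns the path with the symbolic coordinates is a sum of the order labels of \emph{all} lower levels; hence a choice exercised in one window shifts the positions at which every higher window is read off in the sequence. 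So the local selections are emphatically not ``free and simultaneous without obstruction from neighbouring windows'' --- the choice-dependent offset is precisely the obstruction. In addition, a double path alone does not hand you a separating letter cylinder: paths differing only in which parallel edge they use traverse the same vertices, and to see two distinguishable symbols one needs order positions where the fibre visibly merges, i.e.\ a map $\theta^{(n)}_{j_0}$ whose image is a proper subset $B_n\subsetneq A_n$; producing such positions is where the Haar measure of the singular set enters (Lemma~\ref{lem-density} requires it to be zero, and the positive-measure case must be treated separately by the fact that tame minimal systems are regularly almost automorphic). Your plan addresses neither the dichotomy nor the source of the separating cylinder.

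The paper overcomes the alignment problem with Proposition~\ref{arithmetic-progression-prop}: after telescoping one finds at each level an \emph{arithmetic progression} of labels $j_0^{(n)},j_1^{(n)},j_2^{(n)}$ such that $\theta^{(n)}_{j_1^{(n)}}$ and $\theta^{(n)}_{j_2^{(n)}}$ restrict to the \emph{same} bijection of $A_{n+1}$ onto $A_n$, while $B_n:=\theta^{(n)}_{j_0^{(n)}}(V_{n+1})$ is a proper subset of $A_n$, together with labels $i_n$ whose maps collapse $V_{n+1}$ to a single suitable letter. The choice function $\varphi$ is then encoded in the \emph{odometer digits} of a point $z^{\varphi}$ (digit $j^{(2n+1)}_{\varphi_n}$ at odd levels, the collapsing labels $i_{2n}$ at even levels), and one computes fixed times $t_n$, independent of $\varphi$, such that adding $t_n$ shifts the relevant digit by the common difference of the progression; because $j_1$ and $j_2$ act identically on $A$, the accumulated effect of the lower-level choices cancels, and $x_{t_n}$ lies in $\tau^{(2n)}(B_{2n+1})$ or in $A_1\setminus\tau^{(2n)}(B_{2n+1})$ according to $\varphi_n$ alone. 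This is the mechanism that converts thickness into genuine independence of cylinder sets at fixed times; your proposal contains no substitute for it, so as it stands it has a real gap at its central step.
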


Let us close the introduction with pointing out an interesting connection
between tameness of substitution shifts and \emph{amorphic complexity}, a topological invariant which was introduced to detect complex behaviour in the zero entropy regime \cite{FuhrmannGroegerJaeger2016}.
In the case of symbolic systems, it coincides with the box dimension
of the maximal equicontinuous factor, the box dimension being determined through an averaging metric which is defined by the dynamics.
For  constant length substitution  shifts on a binary alphabet,
Theorem~\ref{thm: B} and \cite[Theorem~1.1]{FuhrmannGroger2019} yield that 
$(X_\theta,T)$ is tame if and only if its amorphic complexity is $1$.

The work is organised as follows. 
In the remainder of this section, we collect some basic notions which are needed throughout the article.
In Section \ref{odometer-toeplitz-substitution} we give the Bratteli-Vershik and Toeplitz background needed and prove Theorem \ref{thm:0}. 
We take some time to explain our results specified to the family of substitution shifts as this is an important family where the proofs are simpler and motivational for the following notions.
We recall the general construction of semicocycle extensions 
in Section~\ref{sec: semicocycle extensions} and 
we obtain criteria for tameness of almost automorphic shifts by investigating their discontinuity points and their formulation as semicocycle extensions
in Section~\ref{sec: criteria for tameness of almost automorphic shifts}.
While our examples and main results solely deal with symbolic shifts on finite alphabets,
it turns out that the extra effort due to treating general almost automorphic systems in that section is almost negligible.
Finally, the first part of Theorem~\ref{thm: C} is proven in Section~\ref{sec: a tame toeplitz shift with uncountably many singular fibres} and the second part is dealt with in the last section.

\medskip
\noindent{\bf Acknowledgements.} 
This project has received funding from the European Union's Horizon 2020
research and innovation programme under the Marie Sk\l{}odowska-Curie grant
agreement No 750865.
 
\subsection{Basic notions and notation}\label{sec: basic notation}
Most of this section is standard, see e.g.\ \cite{GottschalkHedlund1955,Auslander1988,deVries1993}.
We provide additional references for less standard material in the text.

We denote by $\N$ the positive integers and by
$\N_0$ the nonnegative integers.
A \emph{dynamical  system} is a continuous 
$\Z$-action (or $\N$-action) on a compact metric space $X$.
Such a system is specified by a pair $(X,T)$ where $T$ is a continuous self-map on $X$.
Clearly, $T$ is invertible when dealing with a $\Z$-action. 
Further, every $\Z$-action restricts to two $\N$ actions, its so-called \emph{forward motion} given by positive powers of $T$, and its \emph{backward motion} 
given by negative powers of $T$. 
Notions such as \emph{subsystem, minimality, (topological) factor, extension, conjugacy} etc.\ have their standard meaning.

A $\Z$-action is called \emph{equicontinuous} if the family $\{T^n : n\in\Z\}$ is equicontinuous. 
It is well-known, see e.g. \cite{Downarowicz2005},  that a minimal equicontinuous system $(\mc Z,S)$ is a 
{\em minimal rotation}, that is, there is a continuous abelian group structure on $\mc Z$ and an element $g\in \mc Z$ such that the homeomorphism $S$ is given by adding $g$, $S(z) = z+g$. 
We hence refer to such a system by $(\mc Z,+g)$.
Minimality implies that $g$ is a \emph{topological generator} of $\mc Z$, that is, $\{ng : n\in\Z\}$ 
is dense in $\mc Z$.

An equicontinuous factor of $(X,T)$ is {\em maximal} if any other equicontinuous factor of $(X,T)$ factors through it.  $(X,T)$ is an \emph{almost one-to-one extension} of a system $(Y, S)$
if the associated factor map $\pi\: X\to Y$ is {\em almost one-to-one}, that is, if $\{x\in X: \pi^{-1}(\{\pi(x) \})=\{x\}\}$ is dense in $X$. 
A system $(X,T)$ is {\em almost automorphic} if it is an almost one-to-one extension of 
a minimal equicontinuous factor. 
Almost automorphic systems are necessarily minimal and for minimal systems $\{x\in X: \pi^{-1}(\{\pi(x) \})=\{x\}\}$ is a dense $G_\delta$ if it is non-empty. 
Given an almost automorphic system $\pi:(X,T)\to (\mc Z,+g)$,
we call the points $z\in \mc Z$ with a unique $\pi$-preimage {\em regular}, and those which have multiple preimages {\em singular}. 
Correspondingly, we call a $\pi$-fibre 
$\pi^{-1}(z)$ \emph{regular} if it is a singleton and otherwise \emph{singular}. 
A point $x\in X$ in a regular fibre is also called an {\em injectivity point}. 

In Section~\ref{sec: semicocycle extensions}, we discuss a natural representation of
almost automorphic $\Z$-actions as (bilateral) shifts. 
Here by \emph{shift} we mean a subsystem of the system $(K^\Z,\sigma)$ given by the set of $K$-valued bilateral sequences $(x_n)_{n\in \Z}\in K^\Z$, equipped with the product topology, where $K$ is a compact metric space, and $\sigma$ is the left shift, $\sigma(x)_n = x_{n+1}$. 

The theory of topological independence allows for an alternative characterisation
of \mbox{(non-)}tameness 
which turns out to be particularly convenient in explicit computations and
doesn't explicitly involve the Ellis semigroup \cite{KerrLi2007}.
Given a system $(X,T)$ and subsets $A_0,A_1 \ssq X$,
we say that $J\ssq \Z$ is an \emph{independence set} for $(A_0,A_1)$ if
for each finite subset $I\ssq J$ and every choice function $\varphi \in \{0,1\}^I$ there exists $x\in X$ such that
$T^i(x)\in A_{\varphi(i)}$ for each $i\in I$.
By combining the results from \cite{KerrLi2007} and the aforementioned shift representation of
almost automorphic systems, we obtain the following characterisation of non-tameness 
which we actually understand as its definition in the following.
\begin{prop}[{cf. \cite[Proposition~6.4]{KerrLi2007} \& \cite[Proposition~3.1]{FuhrmannKwietniak2020}}]\label{prop: independence implies non-tame} 
 A shift $(X,\sigma)\subset (K^\Z,\sigma)$
  is non-tame (or wild) if and only if there are 
 disjoint compact sets $V_0,\, V_1\ssq K$ and a sequence
 $(t_n)_{n\in \N}$ such that for each choice function $\varphi \in \{0,1\}^\N$ there is  
 $(x_n)_n\in X$ for which $x_{t_n}\in V_{\varphi(n)}$ for all $n\in \N$.
\end{prop}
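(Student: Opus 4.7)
The plan is to derive this characterisation from the Kerr-Li independence criterion \cite[Proposition~6.4]{KerrLi2007}: $(X,T)$ is non-tame if and only if there exist disjoint compact $A_0, A_1 \subset X$ and an infinite independence set $J \subset \Z$ for them, meaning that for every finite $I \subset J$ and every $\varphi: I \to \{0,1\}$ there is $x \in X$ with $T^i(x) \in A_{\varphi(i)}$ for each $i \in I$. Two tasks then remain: (i) reformulating the abstract disjoint compact pair as single-coordinate cylinders over a pair $V_0, V_1 \subset K$, and (ii) upgrading the finite independence property to an $\N$-indexed choice condition.

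The ($\Leftarrow$) direction is the easy half. Setting $A_i := \{x \in X : x_0 \in V_i\}$ yields disjoint compact subsets of $X$, and the given $\N$-condition restricted to finite subsets shows $\{t_n\}$ is an infinite independence set for $(A_0, A_1)$. Non-tameness then follows from Kerr-Li.

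For the ($\Rightarrow$) direction, I would apply Kerr-Li to obtain disjoint compact $A_0, A_1 \subset X$ and an infinite independence set $J = \{t_n\}$. Since cylinders based on a finite window $F = [-N,N]$ form a basis for the product topology on $K^\Z$, a compactness argument yields disjoint open $\hat W_0, \hat W_1 \subset K^F$ with $A_i$ contained in the window cylinder $\pi_F^{-1}(\hat W_i)$, and then compact $V_i \subset \hat W_i$ containing the projections $\pi_F(A_i)$. The standard block recoding $\Psi: X \to \tilde X \subset (K^F)^\Z$, $\Psi(x)_n := (x_{n-N},\ldots,x_{n+N})$, is a topological conjugacy that carries these to single-coordinate cylinders over disjoint compact $V_0, V_1$ in the recoded alphabet, with $J$ still an independence set. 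For task (ii), fix $\varphi \in \{0,1\}^\N$ and observe that $E_M := \{x \in X : x_{t_n} \in V_{\varphi(n)} \text{ for } n \le M\}$ is compact and nonempty by the finite independence property applied to $I=\{1,\ldots,M\}$; since $(E_M)_M$ is nested, the finite intersection property of compacta delivers an $x \in \bigcap_M E_M$ as required.

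The main subtlety lies in step ($\Rightarrow$): the block recoding enlarges the alphabet from $K$ to $K^F$, so the proposition's phrasing with $V_i \subset K$ is to be read modulo the standard identification of conjugate symbolic systems, which is harmless because tameness is a conjugacy invariant. Making this identification completely explicit, or instead exhibiting single-coordinate cylinders in $K$ itself directly from the Kerr-Li pair (which would require a Ramsey-type argument to extract a coordinate along which the disjointness survives), is the genuinely delicate point in a fully rigorous argument.
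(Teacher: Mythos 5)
Your ``$\Leftarrow$'' direction and the nested-compactness upgrade from finite to infinite choice functions are fine, and they match what the cited results give. The genuine gap is in ``$\Rightarrow$''. After block recoding you obtain disjoint compact sets $V_0,V_1\subseteq K^F$ and an independence structure for single-coordinate cylinders of the \emph{recoded} shift $\tilde X\subseteq (K^F)^\Z$; the proposition, however, asserts the existence of such sets inside the \emph{original} alphabet $K$ for the \emph{given} presentation $X\subseteq K^\Z$. Appealing to conjugacy invariance of tameness does not close this: it only says $\tilde X$ is non-tame too, which is circular, and it does not transfer the $0$-coordinate independence data back through the recoding. This is not a cosmetic issue for this paper: the single-coordinate version in $K$ is exactly what is used later (the remark that for finite $K$ one may take singletons, and the tameness criteria of Section~4, which are phrased via the canonical semicocycle $f^{can}(z)=x_0$ and the discontinuity set $D$ of the given realisation --- a set the paper explicitly notes is \emph{not} conjugacy invariant). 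You yourself flag the extraction of a single coordinate in $K$ as ``the genuinely delicate point'', but the proposal does not carry it out, so as written the forward implication proves a weaker statement than the one claimed.

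The paper does not prove the proposition itself; it refers to \cite{KerrLi2007} and \cite{FuhrmannKwietniak2020}, and the intended argument there avoids your recoding altogether by localising at an IT-pair. Concretely: non-tameness yields, by Kerr--Li, an IT-pair $(x,y)$ with $x\neq y$; since the set of IT-pairs is closed and invariant under $\sigma\times\sigma$, one may replace $(x,y)$ by a shifted pair and assume $x_0\neq y_0$; choose disjoint compact neighbourhoods $V_0\ni x_0$, $V_1\ni y_0$ in $K$; the corresponding cylinder sets are neighbourhoods of $x$ and $y$ in $X$, so by the defining property of IT-pairs they admit an infinite independence set $(t_n)$; your nested-compactness argument then realises every $\varphi\in\{0,1\}^\N$. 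If you want to keep your route instead, you must add the missing step that converts window-separation into separation at a single common coordinate of $K$ (e.g.\ via such a localisation/pigeonhole argument), since disjointness of $A_0$ and $A_1$ alone does not provide a coordinate $j$ with $\pi_j(A_0)\cap\pi_j(A_1)=\emptyset$.
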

We call the infinite sequence $(t_n)$ of the proposition an {\em independence sequence} for the pair $(V_0,V_1)$ and note that its elements must be pairwise distinct. In the terminology of 
\cite{KerrLi2007} the elements of $(t_n)$ form an independence set for the pair of cylinder sets 
\[
[V_i]=\{x\in X\: x_0\in V_i\} \qquad (i=0,1).
\]
In line with the above characterisation of non-tameness, we call a shift $(X,\sigma)$
\emph{forward non-tame} if it allows for an infinite independence sequence (for some disjoint cylinder sets)
of positive integers.
Similar to the characterisation of non-tameness of $\Z$-actions given in the introduction, this is the case if and only if the Ellis semigroup of the $\N$-action given by the 
forward motion of $(X,\sigma)$ contains a copy of $\beta \N$.

\begin{remark}\label{rem: tameness of shifts with finite alphabet}
In the case of symbolic shifts, i.e.\ when $K=\{a_0,\ldots,a_n\}$ is finite, it
is straightforward to see that $V_0$ and $V_1$ can be chosen to be singletons $\{a_k\}$ and $\{a_\ell\}$, respectively,
and we simply write $[a_k],[a_\ell]$ for the corresponding cylinder sets.
In particular, in order to prove tameness (or forward tameness) of binary shifts, i.e. where $K = \{a,b\}$, it suffices to show that $([a],[b])$ doesn't allow for
an infinite independence set (of positive integers).
\end{remark}

\section{Toeplitz shifts}\label{odometer-toeplitz-substitution}
Our main goal in this work is to characterise those Toeplitz shifts which are tame. 
In the first two parts of this section, we define Toeplitz shifts, and in particular a simple and ubiquitous subclass, 
which is defined by (primitive, aperiodic) substitutions of constant length which have a coincidence. 
While our results are far more general, we will later use examples from this class to illustrate our constructions.
These constructions are carried out in Section~\ref{extended-BV}, where we turn to the Bratteli-Vershik representation of Toeplitz shifts to  prove Theorems~\ref{thm: A}, \ref{thm: B} and \ref{thm:0}.

\subsection{Odometers and Toeplitz shifts}
Given a sequence $(\ell_n)$ of natural numbers,
we work with the group \[\Z_{(\ell_n)} :=\prod_{n} \Z/\ell_n\Z,\] where the group operation is given by coordinate-wise addition with carry.
For a detailed exposition of equivalent definitions of $\Z_{(\ell_n)}$, we refer the reader to \cite{Downarowicz2005}. 
Endowed with the product topology over the discrete topology on each $\Z/\ell_n\Z$, the group $\Z_{(\ell_n)}$ is a compact metrizable  topological group, 
where the unit $z=\dots 001$, which we simply write as $z=1$, is a topological generator.
We write elements $(z_n)$ of $\Z_{(\ell_n)}$ as left-infinite sequences $\dots z_2z_1$ where $z_n\in \Z/\ell_n\Z$,
so that addition in $\Z_{(\ell_n)}$ has the carries 
propagating to the left as usual in $\Z$. If $\ell_n=\ell$ is constant, then  $\Z_{(\ell_n)}= \Z_\ell$ is the classical ring of $\ell$-adic integers.

With the above notation, an {\em odometer} is a dynamical system $(\mc Z, +1)$ where 
 $\mc Z=\Z_{(\ell_n)}$ for some sequence $(\ell_n)$. 
A {\em  Toeplitz shift} is a symbolic shift $(X,\sigma)$, $X\subset \mathcal A^{\Z}$ with $\mathcal A$ finite, which is an almost automorphic extension of an odometer and hence minimal. 

\subsection{Constant length substitutions}\label{sec: substitutions}
A special class of almost automorphic extensions of odometers is the class of 
{\em primitive aperiodic constant length substitutions which possess a coincidence.} 
Since we illustrate our theory mainly with examples from this class, and because the proof of our main result simplifies for this class,
we give the reader both a brief exposition of substitutions, and   also a flavour of our main result for this important class of almost automorphic extensions.

Let $\Aa$ be a finite set, referred to as \emph{alphabet}.
A  {\em substitution of (constant) length $\ell$} over $\Aa$ is a map $\theta:\mathcal A\rightarrow \mathcal A^{\ell}.$
We can write such a substitution as follows: there are $\ell$ maps $\theta_i:\mathcal A \rightarrow \mathcal A$, $0\leq i \leq \ell-1$ such that $\theta(a) = \theta_0(a)\mid \ldots \mid \theta_{\ell-1}(a)$
for all $a\in\mathcal A$, where $\mid$ is to separate the concatenated letters.

 We use concatenation to 
extend $\theta$ to a map on finite and infinite words in $\mathcal A$. 
We say that $\theta$ is {\em primitive} if there is some 
$k\in \N$ such that for any $a,a'\in \mathcal A$,
the word $\theta^k(a)$ contains at least one  occurrence of $a'$. 
We say that a finite word is {\em allowed} for $\theta$ if it appears as a subword in some $\theta^k(a)$, $a\in \mathcal A, k\in\N$.

Let $X_\theta\subset \Aa^\Z$ be the set of  bi-infinite sequences all of whose finite subwords are allowed for $\theta$. Then  $( X_\theta,  \sigma)$ is the  {\em substitution shift}  defined by $\theta$.
Primitivity  of $\theta$ implies that $(X_\theta,\sigma)$ is minimal.
We say that a primitive substitution is {\em aperiodic} if $X_\theta$ is not comprised of $\sigma$-periodic sequences. 
This is the case if and only if $X_\theta$ is an infinite space. 

The shift $(X_\theta,\sigma)$ of 
a primitive aperiodic substitution of constant length $\ell$ factors onto the odometer $(\Z_\ell,+1)$. Indeed, for any $n\geq 1$ the space $X_\theta$ can be partitioned into $\ell^n$ clopen subsets $\sigma^i(\theta^n(X_\theta))$, $i=0,\cdots,\ell^n$, and the factor map is given by
$X_\theta \ni x\mapsto \dots z_2 z_1\in \Z_\ell$ where $z_n$ is the unique $i$ such that $x\in \sigma^i(\theta^n(X_\theta))$ \cite{Dekking1977}. The maximal equicontinuous factor of $(X_\theta,\sigma)$ is therefore a covering of $(\Z_\ell,+1)$ and the degree of this covering is called the {\em height} of the substitution. The height $h$ is always finite. 
Given $\theta$, there is a primitive aperiodic substitution $\theta'$, referred to
as the \emph{pure base} of $\theta$, which is of
the same length $\ell$, has height $1$ and is such that $(X_\theta,\sigma)$ is a $\Z/h\Z$-suspension over  $(X_{\theta'},\sigma)$.
That is, $X_\theta \cong X_{\theta'}\times \Z/\sim$ with $(x,n+h)\sim(\sigma(x),n)$ and the action is induced by $\mathrm{id}\times (+1)$.
There is an explicit construction of $\theta'$ which in fact, equals $\theta$ if $h=1$.  Clearly, the maximal equicontinuous factor of the pure base system is $(\Z_\ell,+1)$.

Let   $\theta$ have as pure base  the substitution  $\theta'$ defined on the alphabet $\mathcal A'$. We say that $\theta$ has a {\em coincidence} if for some $k\in \N$ and some $i_1, \ldots ,i_k\in\{0,\ell-1\}$ we have $|\theta'_{i_1} \ldots \theta'_{i_k}(\mathcal A')|=1$. The importance of this notion lies in the theorem of Dekking stating that 
the substitution shift $( X_\theta,  \sigma)$ is almost automorphic if and only if $\theta$ has a  coincidence \cite{Dekking1977}.

 We shall see below that the question of whether $(X_\theta,\sigma)$ is tame or not is governed by the cardinality of the set of orbits of singular points in the maximal equicontinuous factor of $(X_\theta,\sigma)$. 
 Since $X_\theta \cong X_{\theta'}\times \Z/\sim$, the orbits of singular points in the maximal equicontinuous factor of $(X_\theta,\sigma)$ are in one-to-one correspondence with the orbits of singular points in the maximal equicontinuous factor of the pure base system $(X_{\theta'},\sigma)$.
 We may therefore just determine the cardinality of the latter.
 There is an effective procedure which achieves this \cite{CovenQuasYassawi2016}. We recapitulate a slightly modified version here in the only case which concerns us, 
which is when  $\theta$ has a coincidence and, by going over to the pure base of the substitution if needed,  its height is $1$. 

Consider the graph $\mathcal G_\theta$ whose
vertices are the sets  \[\{\mathcal A\}\cup 
\{A:=\theta_{w_1}\cdots \theta_{w_k}(\mathcal A)\colon k\ge 1,\;
w_1,\ldots,w_k\in \{0,1,\ldots,\ell-1\}, \mbox{ and } |A|>1\}\] 
and whose edges are defined as follows: If $A$ and  $B$ are vertices in $\mathcal G_\theta$, then there is 
an oriented edge from $B$ to $A$, labelled  $i$,  if and only if  $\theta_i(A)=B$. An infinite path in $\mathcal G_\theta$ defines a point $\dots z_2z_1\in\Z_\ell$, where $z_i$ is the label of the $i$-th edge in the path.

We define
$\hat \Sigma_\theta $ to be the set of all sequences $(z_i)\in\Z_\ell$ obtained as above from infinite paths in $\mathcal G_\theta$.
The $\Z$-orbit of $\hat\Sigma_\theta$ under  +1  equals $ \{ z\in \Z_\ell: |\pi^{-1}(z)|>1\}$, which is the set of singular points in $\Z_\ell$. 
This set is a proper subset of $\Z_\ell$, as $(X_\theta,\sigma)$ is almost automorphic.

Recall that a \emph{cycle} on an oriented graph is a finite path which is closed and minimal in the sense that it is not the concatenation of smaller closed paths.  

\begin{lem}\label{lem:finite-or-infinite} 
Let $\theta$ be a primitive aperiodic substitution of constant length with pure base $\theta'$. Its maximal equicontinuous factor contains  either finitely many, or uncountably many orbits of singular points. 
The latter is the case if and only if $\mathcal G_{\theta'}$ contains two distinct cycles which share a common vertex.
\end{lem}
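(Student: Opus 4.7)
The plan is to first reduce to the case $\theta=\theta'$ of height one, since the suspension $X_\theta\cong X_{\theta'}\times \Z/{\sim}$ puts the orbits of singular points of the two shifts in bijection. Under this reduction, the singular set is the $\Z$-orbit of $\hat\Sigma_\theta$, and two $\ell$-adic integers lie in the same $\Z$-orbit iff their digit expansions eventually coincide, so orbits of singular points correspond bijectively to tail-equivalence classes of digit sequences read off from infinite paths in the finite graph $\mathcal G_\theta$.

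For the forward implication I would use the standard graph-theoretic observation that the hypothesis ``no two distinct cycles share a vertex'' forces each strongly connected component of $\mathcal G_\theta$ containing a cycle to be a single simple cycle. Since the graph is finite, every infinite path eventually enters one such cycle $C_j$ of length $p_j$ and from then on just traverses it repeatedly. Its tail is therefore eventually periodic and determined by the finite datum (cycle, entry phase); there are at most $\sum_j p_j$ such tails up to eventual equality, yielding finitely many singular orbits.

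For the reverse implication, let $C_1,C_2$ be distinct cycles through a common vertex $v$, with label words $u^{(1)},u^{(2)}$. For each $\alpha\in\{1,2\}^{\N}$ I construct the infinite path $P_\alpha$ starting at $v$ that traverses $C_{\alpha_n}$ on its $n$-th return to $v$; the resulting map $D\colon\{1,2\}^{\N}\to\Z_\ell$ sends $\alpha$ to the $\ell$-adic integer with digit sequence $u^{(\alpha_1)}u^{(\alpha_2)}\cdots$. Since every $\Z$-orbit in $\Z_\ell$ is countable, it suffices to prove that $D$ is injective, for then its image has cardinality $2^{\aleph_0}$ and meets uncountably many orbits.

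The decisive step, which I expect to be the main obstacle, is showing that $\{u^{(1)},u^{(2)}\}$ forms a code, equivalently that $u^{(1)},u^{(2)}$ are not both powers of a common word. Suppose for contradiction $u^{(i)}=w^{m_i}$ with $|w|=d$, and set $\phi:=\theta_{w_1}\cdots\theta_{w_d}$. Reading each cycle backwards from its endpoint via the defining edge relations yields the intermediate vertices $v^{(i)}_{kd}=\phi^{m_i-k}(v)$ along $C_i$; closure of $C_i$ at $v$ forces $\phi^{m_i}(v)=v$. If $r$ denotes the order of $\phi$ on $v$, simplicity of each $C_i$ (which rules out $v^{(i)}_{kd}=v$ for $0<k<m_i$) yields $r=m_i$ for both $i=1,2$; hence $m_1=m_2$ and $u^{(1)}=u^{(2)}$, whence $C_1=C_2$ by the same backward-determination, contradicting $C_1\neq C_2$. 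Thus $D$ is injective and the singular orbits are uncountable.
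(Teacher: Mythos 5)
Your overall skeleton is the same as the paper's: reduce to the pure base, identify orbits of singular points with tail classes of the digit sequences read off infinite paths in the finite graph $\mathcal G_{\theta'}$, get finiteness when no vertex lies on two cycles (eventually the path just repeats one cycle), and get uncountability from two cycles through a common vertex by traversing them in arbitrary order. Where you genuinely go beyond the paper is in the uncountable direction: the paper only observes that there are uncountably many \emph{paths}, and does not address whether distinct orderings of the two cycles yield distinct points of $\Z_\ell$ (which would fail precisely if the two label words $u^{(1)},u^{(2)}$ were powers of a common word, since then every ordering produces the single periodic sequence). Your backward-determination argument --- using that the source of an edge is determined by its range and label, so that $u^{(i)}=w^{m_i}$ forces $\phi^{m_i}(v)=v$, and minimality of the cycles then forces $m_1=m_2$ and hence $C_1=C_2$ --- is correct and closes exactly this point; that is a worthwhile refinement of the published argument. (Two minor caveats: with the paper's definition of cycle, a cycle re-rooted at the shared vertex $v$ could revisit $v$, so you should first replace $C_1,C_2$ by two distinct first-return loops at $v$, which always exist, before invoking ``no intermediate return to $v$''; and ``same $\Z$-orbit iff eventually equal expansions'' needs the standard exception of the orbit of $0$, though this does not affect the counting.)

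The one step you assert without justification is the passage from ``$\{u^{(1)},u^{(2)}\}$ is a code'' to injectivity of $D$. Unique factorisation of \emph{finite} words (the two-word defect theorem, which is what ``code $\Leftrightarrow$ not powers of a common word'' gives you) does not formally yield unique factorisation of one-sided \emph{infinite} concatenations: what you need is that a two-element code is an $\omega$-code, and this is a genuinely stronger statement --- it fails already for three-element codes, e.g.\ $\{a,ab,bb\}$ is a code but $a(bb)^{\omega}=ab(bb)^{\omega}$. For two elements the statement is true and classical, and can be proved by a short descent: if $z=u s=v t$ with $s,t$ infinite products over $\{u,v\}$ and $u\neq v$, then (up to exchanging $u,v$) $v=uv'$ with $v'$ nonempty, and $s=v't$ gives two factorisations of $s$ over $\{u,v'\}$ differing in the first block, so induction on $|u|+|v|$ yields that $u$ and $v'$, hence $u$ and $v$, are powers of a common word. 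You should either include such an argument or cite the fact; as written, ``thus $D$ is injective'' skips it. With that supplied, your proof is complete.
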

\begin{proof} 
Observe that besides the paths corresponding to the orbit through $0$, two infinite paths in $\mathcal G_{\theta'}$ belong to the same $\Z$-orbit of $\Z_\ell$ if they differ only on a finite initial segment, that is,
if they are tail equivalent. 
Therefore, the statement comes down to showing that 
there are finitely many, or uncountably many distinct infinite paths up to tail equivalence in 
$\mathcal G_{\theta'}$. 

Clearly, $\mathcal G_{\theta'}$ must contain cycles as it contains infinite paths. If we have a vertex in two different cycles then, starting from that vertex, we can follow through the two cycles in any order we wish and therefore the number of paths in  $\mathcal G_{\theta'}$ is uncountable.

Now assume that there is no vertex in two different cycles.
An infinite path must visit some vertex infinitely often. 
As this vertex is not part of more than one cycle, the path must eventually follow the same cycle. 
As there are only finitely many vertices and edges, there can only be finitely many cycles. 
Hence, up to tail equivalence, there are only finitely many infinite paths in  $\mathcal G_{\theta'}$.  
\end{proof}

Let us anticipate the following important consequence of Lemma \ref{lem:finite-or-infinite}
combined with Theorem~\ref{Toeplitz finite rank non-tame} and the discussion  in Section~\ref{sec:toeplitz-constant}.

\begin{thm}\label{thm:restricted}
\label{thm: finite discontinuities iff tame}
Let $\theta$ be a primitive aperiodic substitution of constant length with pure base $\theta'$.
Then  $(X_\theta,\sigma)$ is tame if and only if it has a coincidence
and  $\mathcal G_{\theta'}$ does not contain two distinct cycles which share a common a vertex.
 \end{thm}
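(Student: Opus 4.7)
The plan is to chain together three results already available in the excerpt. The ``only if'' direction of the coincidence condition is immediate: by Huang's theorem (cited in the introduction), tameness of a minimal $\Z$-action forces almost automorphicity, and by Dekking's theorem a primitive aperiodic constant length substitution produces an almost automorphic shift precisely when it has a coincidence. So throughout the remaining argument I may assume $\theta$ has a coincidence, and in particular that $(X_\theta,\sigma)$ is an almost automorphic extension of an odometer of the form $\Z_\ell \times \Z/h\Z$.

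Next I would reduce from $\theta$ to its pure base $\theta'$. The suspension identification $X_\theta \cong X_{\theta'}\times \Z/h\Z/\!\sim$ with action induced by $\mathrm{id}\times (+1)$ exhibits $(X_\theta,\sigma)$ as a finite topological extension of $(X_{\theta'},\sigma)$, and conversely $(X_{\theta'},\sigma)$ is conjugate to $(X_\theta,\sigma^h)$ restricted to any of the $h$ clopen slices. Using the characterisation of non-tameness in Proposition~\ref{prop: independence implies non-tame}, an infinite independence sequence in one system immediately yields an infinite independence sequence in the other (for the suspension direction, pull back cylinders; for the reverse direction, restrict to a single residue class modulo $h$ and take a subsequence). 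Hence $(X_\theta,\sigma)$ is tame if and only if $(X_{\theta'},\sigma)$ is tame. The same reduction applies to the set of singular orbits, as already noted in the paragraph preceding Lemma~\ref{lem:finite-or-infinite}.

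Now I would apply Theorem~\ref{thm: A} to $(X_{\theta'},\sigma)$. Because $\theta'$ is a constant length substitution, the standard Bratteli--Vershik representation of $(X_{\theta'},\sigma)$ (with one vertex per letter at each level and edges given by the letter positions in $\theta'$) has at most $|\mathcal A'|$ vertices at each level, so $(X_{\theta'},\sigma)$ is a Toeplitz shift of finite Toeplitz rank. Theorem~\ref{thm: A} then asserts that $(X_{\theta'},\sigma)$ is tame if and only if its maximal equicontinuous factor $(\Z_\ell,+1)$ has only countably many singular points. Since each $\Z$-orbit in $\Z_\ell$ is countable, this is equivalent to having only finitely many orbits of singular points.

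Finally, Lemma~\ref{lem:finite-or-infinite} converts this finiteness into a structural condition on $\mathcal G_{\theta'}$: there are finitely many orbits of singular points exactly when $\mathcal G_{\theta'}$ does not contain two distinct cycles sharing a common vertex. Combining the equivalences gives the theorem. The main obstacle I expect is verifying cleanly that tameness descends along the $\Z/h\Z$-suspension $X_\theta \to X_{\theta'}$; once this is secured, the remainder of the argument is an application of stated results.
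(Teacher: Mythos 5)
Your overall chain is the same as the paper's: coincidence is forced by Huang's theorem plus Dekking's criterion, the finite-Toeplitz-rank theorem (Theorem~\ref{Toeplitz finite rank non-tame}) converts tameness into countability of the singular points, and Lemma~\ref{lem:finite-or-infinite} converts that (via the finite-or-uncountable dichotomy for singular orbits, each orbit being countable) into the cycle condition on $\mathcal G_{\theta'}$. Where you genuinely deviate is in handling the height: you transfer tameness across the $\Z/h\Z$-suspension to the pure base and then apply Theorem~\ref{thm: A} to $(X_{\theta'},\sigma)$, whereas the paper applies the finite-rank theorem directly to $(X_\theta,\sigma)$ (which, having a coincidence, is itself a Toeplitz shift over the odometer $\Z_\ell\times\Z/h\Z$, since $\gcd(h,\ell)=1$) and only uses the pure base through the stated one-to-one correspondence between the singular orbits of the two maximal equicontinuous factors. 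Your reduction can be made rigorous — most cleanly via $E(X,\sigma^h)\subseteq E(X,\sigma)$, $E(X,\sigma)\subseteq\bigcup_{j<h}\sigma^j E(X,\sigma^h)$, and the fact that a finite disjoint union of clopen invariant pieces is tame iff each piece is — but your independence-sequence sketch ("restrict to a residue class") glosses over the fact that the realising points need not stay in a single slice, so as written it needs this extra argument. The paper's route simply avoids this step.

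The one step of yours that is wrong as stated is the justification of finite Toeplitz rank: you invoke "the standard Bratteli--Vershik representation of $(X_{\theta'},\sigma)$ with one vertex per letter at each level and edges given by the letter positions in $\theta'$". Section~\ref{sec:toeplitz-constant} of the paper warns precisely against this: that naive stationary diagram carries a proper order (and hence defines a Bratteli--Vershik system at all) only when all substitution words begin with the same letter and end with the same letter, which passing to the pure base does not guarantee. The conclusion you need — that $(X_{\theta'},\sigma)$ (or $(X_\theta,\sigma)$ itself) is a Toeplitz shift of \emph{finite Toeplitz rank} — is true, but it must be obtained from the Gjerde--Johansen construction referenced there (a properly ordered Toeplitz Bratteli--Vershik representation whose levels are governed by the words $\theta^n(a)$, hence with a uniformly bounded number of vertices per level), not from the naive stationary diagram. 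With that substitution your argument goes through and coincides in substance with the paper's intended proof.
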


\begin{myexp}\label{ex:examplezero} Let $\theta$ be the substitution 
$$\begin{array}{c c l}
a & \mapsto & aaca\\
b & \mapsto & abba \\
c& \mapsto & aaba,
\end{array}
$$
on the alphabet $\Aa=\{a,b,c\}$.  It is primitive, aperiodic, and  has trivial height. Its graph $\mathcal G_\theta$ is depicted on the left hand side in Figure \ref{wrong_graph}. 
Here,
$\mathcal G_\theta$ has two different cycles at $\{a,b\}$, so by Theorem \ref{thm:restricted},  $(X_\theta,\sigma)$ is non-tame.
\end{myexp} 
\begin{myexp}\label{ex:examplezerob} 
We modify slightly the above example and define $\theta$ as
$$\begin{array}{c c l}
a &\mapsto  & aaca\\
b & \mapsto & abba \\
c& \mapsto & acba,
\end{array}
$$
on the alphabet $\Aa=\{a,b,c\}$. It still is primitive, aperiodic, and has trivial height.
 The graph $\mathcal G_{\theta}$ is shown on the right in Figure \ref{wrong_graph}, and as it has only one cycle about any vertex, 
by Theorem \ref{thm:restricted}, $(X_\theta,\sigma)$ is tame.
\end{myexp}

\begin{center}
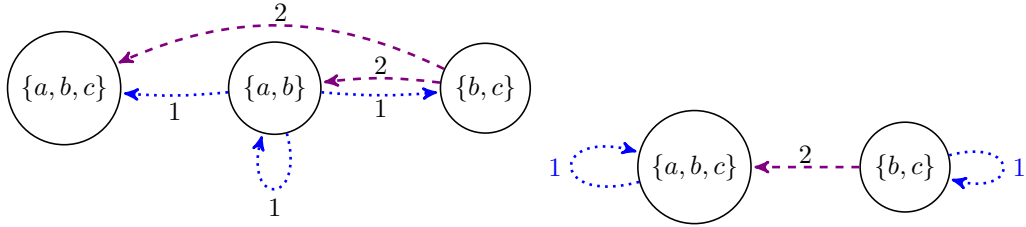
\begin{figure}
\begin{tikzpicture}
[->,>=stealth',shorten >=1pt,auto,node distance=2.8cm,
                    semithick] 
 \tikzstyle{every state}=[fill=white,draw=black,text=black]
 \node[state]         (A)  {$\{ a,b\}$};
   \node[state]         (O) [left of=A] {$\{ a,b,c\}$};

    \node[state]         (B) [right of=A]       {$\{b,c \}$};
  \path (B) edge[yshift=2pt, line width =1, bend right=7, color=violet, dashed] node[yshift=12pt, color=black]  { 2}  (A);
    \path (A) edge          [line width=1, bend right=5, color=blue, dotted]      node[yshift=-12pt, color = black]{1}(B);
   \path     (A) edge [loop below, line width =1, color=blue, dotted] node[color=black]{1} (A);
     \path (B) edge[yshift=2pt, line width =1, bend right=25, color=violet, dashed] node[yshift=12pt, color=black]  { 2}  (O);
        \path (A) edge          [line width=1, bend left=5, color=blue, dotted]      node[yshift=1pt, color = black]{1}(O);

                      \end{tikzpicture}
\begin{tikzpicture}[->,>=stealth',shorten >=1pt,auto,node distance=2.8cm,
                    semithick] 
 \tikzstyle{every state}=[fill=white,draw=black,text=black]
  \node[state]         (A)  {$\{ a,b,c \}$};
      \node[state]         (B) [right of=A]       {$\{b,c \}$};
   \path     (B) edge [loop right, line width =1, color=blue, dotted] node{1} (B);
     \path     (A) edge [loop left, line width =1, color=blue, dotted] node{1} (A);
       \path     (B) edge[yshift=2pt, line width =1, color=violet, dashed] node[yshift=12pt, color=black] {2} (A);
                      \end{tikzpicture}
\caption{\label{wrong_graph}
The graph $\mathcal G_\theta$ for the substitutions from Example~\ref{ex:examplezero} (left) and Example~\ref{ex:examplezerob} (right). We used blue dotted and violet dashed lines for better comparison with Figure \ref{thickness-not-rank-picture-1}.}
\end{figure}
\end{center}

\subsection{The Bratteli-Vershik representation of a  Toeplitz shift}
In this section, we briefly discuss the Bratteli-Vershik representation of Toeplitz shifts. 
As Bratteli-Vershik systems are well documented in the literature, we keep this to a minimum.
Interested readers may consult classical references on Bratteli-Vershik systems, such as \cite{HermanPutnamSkau}. 
In particular, since we will only be concerned with Bratteli-Vershik systems that are conjugate to Toeplitz shifts, we refer the reader to the work by Gjerde and Johansen \cite{GjerdeJohansen}; unless stated otherwise, we adopt the latter's notational conventions. 

\subsubsection{Bratteli-Vershik systems}
A {\em Bratteli diagram} is an infinite graph $B=(V,E)$ where the vertex
set $V=\bigsqcup_{n\geq 0}V_n$ and the edge set $E=\bigsqcup_{n\geq 0}E_n$
are equipped with a \emph{range map} $r\: E\to V$ and a \emph{source map} $s\: E\to V$ such that
\begin{enumerate}
 \item $V_0=\{v_0\}$ is a singleton,
 \item $V_n$ and $E_n$ are finite sets,
 \item $r(E_n)= V_{n+1}$, $s(E_n)= V_{n}$, 
 \item $r^{-1}(v)\neq\emptyset$ for all $v\neq v_0$. 
\end{enumerate}

The pair $(V_n,E_n)$ or just $V_n$ is called the \emph{$n$-th level} of the
diagram $B$.  A finite or infinite sequence of edges $(\gamma_n : \gamma_n\in E_n)$
such that $r(\gamma_{n})=s(\gamma_{n+1})$ is called a {\it finite} or {\it infinite
path}, respectively. 
The source map and the range map extend to paths in the obvious way.
For a Bratteli diagram $B$,
let $X_B$ be the set of infinite paths $(\gamma_n)_{n\geq 0}$ starting at the \emph{top
vertex} $v_0$. 
Given a path $(\gamma_n)_{n\geq 0}$ and $m\geq 0$, we call $(\gamma_n)_{n\geq m}$ a {\em tail of $(\gamma_n)$} and $(\gamma_n)_{n\leq m}$ a {\em head of  $(\gamma_n)$}.
Two paths $(\gamma_n)$ and $ (\gamma'_n)$ are called 
{\it tail equivalent} (or \emph{cofinal}) if they share a common tail.

We shall  constantly use the \textit{telescoping}
procedure.
Let $B$ be a Bratteli diagram, and $n_0 = 0 <n_1<n_2 < \ldots$ be a strictly increasing sequence of integers. The {\em telescoping of $B$ to $(n_k)$} is the Bratteli diagram $B'$, whose $k$-level vertex set is $V_k':= V_{n_k}$, and where the set of edges between $v\in V_k'$ and $w\in V_{k+1}'$ are in one-to-one correspondence with the set of paths in $B$ between $v\in V_{n_k}$ and $w\in V_{n_{k+1}}$.
There is then an obvious bijection 
between $X_B$ and $X_{B'}$.

A Bratteli diagram $B$ has {\em rank} $d$ if there is
a telescoping $B'$ of $B$ such that $B'$ has exactly $d$ vertices at each level. 
We say that $B$ is {\em simple} if for any level
$m$ there is $n>m$ such that for any two vertices  $v\in E_n$ and $w\in V_m$
there is a path with source $w$ and range $v$. This is equivalent to saying that by telescoping we can arrive at a Bratteli diagram $B'$ such that any two vertices in consecutive levels are connected by an edge.

Recall also that the path space $X_B$ comes with a totally disconnected compact metrizable topology, 
and if $B$ is simple and $|E_n|>1$ infinitely often, then $X_B$ is a Cantor set.
\begin{definition}
We say that $B=(V,E)$ has the \emph{equal path number property} if there is a sequence $(\ell_n)_{n\geq 0}$ such that for each $v\in V_{n+1}$ there are $\ell_n$ edges with range $v$. 
We call $(\ell_n)_{n\geq 0}$ the \emph{characteristic sequence} of $B$.
\end{definition}

\newcommand{\ord}{\om}

An {\em ordered} Bratteli diagram is a Bratteli diagram together with a total order on each set of edges which end at the same vertex. In other words, for each $r^{-1}(v)$, $v\in V$, the order naturally defines a bijection $\ord:r^{-1}(v)\to\{0,\ldots,|r^{-1}(v)|-1\}$. We refer to $\ord(e)$ also as the \emph{label} of $e$. Under certain circumstances,  given in detail in \cite{GjerdeJohansen}, this  order induces a {\em proper order} $\om$ 
on $X_B$. 
In a nutshell, the {\em successor} of an infinite  path $(\gamma_n)_{n\geq 0}\in X_B$ w.r.t.\ $\om$, when this exists,  is a tail-equivalent path whose order labelling corresponds to addition of $1$ to that of  $(\gamma_n)_{n\geq 0}$. The notion of a predecessor is defined analogously. The order being {\em proper} means that there is a unique path (the  {\em maximal} path) which has no successor in this order, and 
a unique path (the  {\em minimal} path) which has no predecessor. In this case one can define the {\em Vershik} map $\varphi_\omega : X_B\rightarrow X_B$, which sends a  nonmaximal path to its successor and which sends  the unique maximal path to the unique minimal path. It is a homeomorphism and thus defines a dynamical system $(X_B,\varphi_\omega)$ referred to as a {\em Bratteli-Vershik  system}.
If $(B,\omega)$ is a properly ordered  Bratteli diagram and $B'=(V',E')$ is the telescoping of $B$ to levels $(n_k)$, then  the order $\omega$ defines a natural proper order 
$\om'$ on $B'$, and $(X_B,\varphi_\om)$ is topologically conjugate to $(X_{B'},\varphi_{\om'})$. Properly ordered  simple Bratteli diagrams define minimal Bratteli-Vershik systems. Conversely, any Cantor minimal dynamical system $(X,T)$ is conjugate to a Bratteli-Vershik system where $B$ is simple \cite{HermanPutnamSkau}; the latter is called a Bratteli-Vershik representation of $(X,T)$. 
Not every such dynamical system has a Bratteli-Vershik representation with finite rank, but if this is the case, one says that 
$(X,T)$ has {\em finite topological rank}. More precisely, the topological rank of such a system is the smallest rank among its Bratteli-Vershik representations.

\subsubsection{Toeplitz Bratteli-Vershik systems}\label{sec: toeplitz bratelli-vershik systems}
We now focus on Bratteli diagrams which have the equal path number property. 
Here, if $v\in V_n$, then  $|r^{-1}(v)|=\ell_n$.
 Recall that a Bratteli-Vershik system is \emph{expansive} if and only if there is $k\in\N$ such that for distinct
 $x,y\in X_B$ the head of length $k$ of $\phi_\w^n(x)$ differs from that of $\phi_\w^n(y)$ for some $n\in\Z$.
 Downarowicz and Maass \cite{DownarowiczMaass2008} show that a simple properly ordered Bratteli-Vershik system 
 with finite topological rank is expansive if and only if its topological rank is strictly larger than $1$. A simple properly ordered Bratteli-Vershik system  with topological rank $1$ is an odometer. 
\begin{thm}[\cite{GjerdeJohansen}]\label{Gjerde-Johansen}
The family of expansive, simple, properly ordered Bratteli-Vershik systems 
with the equal path number property coincides with the family of  
Toeplitz shifts up to conjugacy. 
\end{thm}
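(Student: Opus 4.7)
The plan is to prove the two containments separately, with the canonical factor map onto the odometer serving as the central object in both directions.

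For the direction showing that every Toeplitz shift gives such a Bratteli-Vershik system, I would start from $(X,\sigma)\ssq \mathcal A^\Z$ and its factor map $\pi\colon X\to \mc Z=\Z_{(\ell_n)}$ onto the maximal equicontinuous factor. For each $n\ge 0$, pull back the clopen partition of $\mc Z$ into the $\prod_{k<n}\ell_k$ cylinders determined by the first $n$ coordinates; this yields a refining sequence of Kakutani--Rokhlin partitions of $X$. Organise each such partition into towers and declare two towers equivalent when their $\mathcal A$-codings agree; finiteness of these equivalence classes at each level (a consequence of the Toeplitz property, which forces each coordinate to be eventually periodic along an arithmetic progression) supplies the vertex set $V_n$. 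Edges and their orders then record how each level-$(n+1)$ tower decomposes into level-$n$ subtowers in the order inherited from $\Z/\ell_n\Z$. The equal path number property is immediate from this construction, expansiveness follows from the symbolic nature of $X$, and properness of the order follows because the orbit of the odometer generator singles out a unique minimal and a unique maximal path.

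For the converse, given $(X_B,\varphi_\omega)$ with characteristic sequence $(\ell_n)$, I define $\pi\colon X_B\to \Z_{(\ell_n)}$ by sending a path $(\gamma_n)_{n\ge 0}$ to the element whose $n$-th coordinate is the label $\omega(\gamma_n)\in \Z/\ell_n\Z$. Continuity is direct, and $\pi\circ\varphi_\omega = (+1)\circ\pi$ holds because the Vershik map is by definition ``add one with carry'' on these labels. Expansiveness then lets me encode $(X_B,\varphi_\omega)$ as a subshift of some $\mathcal A^\Z$ via the coding supplied by a sufficiently deep initial segment of $B$, after which $\pi$ becomes a factor map of shifts onto the odometer. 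What remains is that $\pi$ is almost one-to-one: two paths in the same $\pi$-fibre are necessarily cofinal, so any cofinality class of size one yields a singleton fibre; such singleton classes exist (the maximal and minimal paths, say), and simplicity of $B$ together with properness of $\omega$ force their orbits to be dense, whence the set of singleton fibres is a dense $G_\delta$.

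The main technical obstacle lies at the interface between the algebraic carrying structure of $\Z_{(\ell_n)}$ and the combinatorial refinement of the diagram. In the forward direction the crux is verifying that the tower equivalence classes remain finite at every level; this rests on uniform recurrence together with the defining periodicity property of Toeplitz shifts. In the converse direction the delicate point is almost one-to-one-ness of $\pi$, which ultimately reduces to showing that there is at least one path whose cofinality class is a singleton and whose orbit is dense---precisely where simplicity and properness of the ordered diagram are used essentially. Modulo these verifications, matching the canonical factor maps onto $\Z_{(\ell_n)}$ on the two sides yields the asserted conjugacy.
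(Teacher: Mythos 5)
The paper does not actually prove this statement; it is quoted from \cite{GjerdeJohansen}, so your proposal has to stand on its own. Your overall strategy (match the canonical factor maps onto $\Z_{(\ell_n)}$ in both directions) is the right one, but two key steps fail. In the converse direction, the almost one-to-one-ness argument rests on the claim that two paths in the same fibre of the label map are cofinal; this is false, indeed it is exactly backwards. Since the order is a total order on each $r^{-1}(v)$, an edge is uniquely determined by its range vertex together with its order label, so if two paths share a common tail and have identical label sequences, downward induction forces them to coincide: distinct paths in the same fibre are \emph{never} cofinal. (Singular fibres consist precisely of distinct paths with the same labels passing through distinct vertices at all large levels; cf.\ Lemma~\ref{lem-x} and the extended diagram.) Moreover, in a simple diagram no tail-equivalence class is a singleton --- in particular the maximal and minimal paths have infinite cofinality classes --- so the singleton classes you invoke do not exist. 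The correct (and shorter) argument is available to you: by the equal path number property, the fibre over $0\in\Z_{(\ell_n)}$ is exactly the set of paths all of whose labels are $0$, i.e.\ the set of minimal paths, and properness of the order says this is a single point; minimality (from simplicity) then makes the set of injectivity points dense, and the Toeplitz (periodicity) structure of the coded sequences follows from $|\theta^{(n)}_0(V_{n+1})|=|\theta^{(n)}_{\ell_n-1}(V_{n+1})|=1$ after telescoping.

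In the forward direction there is a parallel gap. Your Kakutani--Rokhlin bases are the preimages of the cylinders at the identity of the odometer, so their intersection is $\pi^{-1}(0)$, which in general is a \emph{singular} fibre containing several points (this situation is ubiquitous in the paper's examples). Then the singleton-intersection condition needed to identify $(X,\sigma)$ with the path-space dynamics fails, and concretely the induced order can have several minimal paths, so properness does not ``follow because the orbit of the odometer generator singles out a unique minimal and a unique maximal path'' --- that phrase hides precisely the point at issue. The repair is to anchor the nested cylinders at a regular point $z^*$ of the odometer (which exists by almost automorphy; note $z^*-1$ is then regular as well, since the singular set is a union of orbits), taking $B_n=\pi^{-1}$ of the depth-$n$ cylinder of $z^*$; then both $\bigcap_n B_n$ and the intersection of the tower tops are singletons, which simultaneously gives properness of the order and the conjugacy of $(X,\sigma)$ with the resulting Bratteli--Vershik system. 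Two smaller points: finiteness of the vertex sets has nothing to do with the Toeplitz periodicity property --- it is immediate from finiteness of the alphabet --- and the fact that the pulled-back partitions generate the topology (needed for the conjugacy) still requires a verification that your sketch omits.
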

In view of this result, we call an expansive simple properly ordered Bratteli-Vershik system
with the equal path number property a {\em Toeplitz Bratteli-Vershik system}. 
Moreover, we say that a Toeplitz shift has {\em finite Toeplitz rank} if it is conjugate to a Toeplitz Bratteli-Vershik system which has finite rank.
Note that having finite Toeplitz rank is stronger than having a Toeplitz shift with finite topological rank as we cannot rule out that a Toeplitz system with infinite Toeplitz rank has a Bratteli Vershik representation of finite rank but without the equal path number property.
\begin{lem} [\cite{GjerdeJohansen}]\label{lem-ord}
Let $(X_B,\varphi_\omega)$ be a Toeplitz Bratteli-Vershik system with characteristic sequence
 $(\ell_n)$. 
 The level-wise application of the \emph{edge order map} 
 \[\ord :(X_B,\varphi_\om) \to (\Z_{(\ell_n)},+1),\quad \ord((\gamma_n)) = (\ord(\gamma_n))\]
 is a factor map to the maximal equicontinuous factor.
\end{lem}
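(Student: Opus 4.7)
The plan is to show two things in sequence: (I) that $\ord$ is a factor map onto the odometer $(\Z_{(\ell_n)},+1)$, and (II) that this factor is the maximal equicontinuous factor of $(X_B,\varphi_\omega)$.

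For (I), I would verify well-definedness, continuity, surjectivity and equivariance in turn. Well-definedness uses the equal path number property: each $\gamma_n\in E_n$ belongs to some fibre $r^{-1}(v)$ of size $\ell_n$, so the label $\ord(\gamma_n)$ lies in $\{0,\ldots,\ell_n-1\}\cong\Z/\ell_n\Z$ independently of the terminal vertex. Continuity is immediate since both spaces carry product topologies and $\ord$ acts coordinatewise. For surjectivity, given $(z_n)\in\Z_{(\ell_n)}$, construct a preimage inductively: pick any $v_1\in V_1$ and take $\gamma_0$ to be the unique edge with $r(\gamma_0)=v_1$ and $\ord(\gamma_0)=z_0$; then iterate using that $r^{-1}(v)\ne\emptyset$ for $v\ne v_0$. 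Equivariance is built into the Vershik construction: on a non-maximal path, $\varphi_\omega$ increments the level-$0$ label by $1$ with carries propagating upward whenever a label saturates at $\ell_n-1$, which is precisely the $+1$ action on $\Z_{(\ell_n)}$. The boundary case $\varphi_\omega(x^{\max})=x^{\min}$ corresponds to $(\ldots,\ell_1-1,\ell_0-1)+1=0$ in $\Z_{(\ell_n)}$.

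For (II) it suffices to show that $\ord$ is almost one-to-one: then the codomain, being minimal and equicontinuous, is automatically the maximal equicontinuous factor by the standard criterion. The key claim is that $\ord^{-1}(0)=\{x^{\min}\}$. Suppose $y\in\ord^{-1}(0)$, so every edge of $y$ carries label $0$. If $y\ne x^{\min}$, then $y$ admits a predecessor $y'=\varphi_\omega^{-1}(y)$, which by definition is tail-equivalent to $y$ and satisfies $\ord(y')=(\ldots,\ell_1-1,\ell_0-1)$. Tail equivalence forces the labels of $y$ and $y'$ to agree from some level $N$ onward, which forces $\ell_n=1$ for all $n\ge N$; but then the diagram past level $N$ has a unique incoming edge at each vertex, rendering $X_B$ finite and contradicting expansivity combined with minimality. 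Hence $\ord^{-1}(0)$ is a singleton. Since the set of injectivity points in $\Z_{(\ell_n)}$ is $+1$-invariant and nonempty, it is dense by minimality of the odometer, and so $\ord$ is almost one-to-one.

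The main obstacle is the almost one-to-one step. The formal manipulations are short but one has to unwind carefully what \emph{predecessor} and \emph{tail equivalence} mean in order to rule out additional paths with all labels $0$ beyond $x^{\min}$. The equivariance in part (I) is essentially tautological once the Vershik successor construction is accepted as $+1$-with-carries, but the maximal-to-minimal transition requires identifying the label sequence of $x^{\max}$ with $-1\in\Z_{(\ell_n)}$, which is where the procyclic structure of the target really enters.
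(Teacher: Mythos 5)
You should first note that the paper does not prove this lemma at all: it is quoted from Gjerde--Johansen, so the comparison is with the standard argument, whose architecture you have correctly reproduced (coordinatewise order map is a factor map onto the odometer; it has a singleton fibre over the orbit of $0$; an almost one-to-one extension of a minimal equicontinuous system has that system as its maximal equicontinuous factor). Two steps of your execution, however, do not hold as written. First, the surjectivity induction: after level $0$ you need an edge with \emph{prescribed source} $r(\gamma_{n-1})$ \emph{and prescribed label} $z_n$, and nothing guarantees its existence -- the equal path number property only says that each fibre $r^{-1}(w)$, $w\in V_{n+1}$, contains exactly one edge of each label, and all edges carrying the label $z_n$ may well have sources different from the vertex you have reached; the condition $r^{-1}(v)\neq\emptyset$ you invoke concerns ranges, not sources. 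The repair is easy: either build, for each $n$, a finite path \emph{downwards} (pick any $w\in V_{n+1}$, take the unique label-$z_n$ edge into it, then the unique label-$z_{n-1}$ edge into its source, and so on until $v_0$) and pass to an infinite path by compactness, or simply observe that $\ord(X_B)$ is a nonempty closed $+1$-invariant subset of the minimal rotation $(\Z_{(\ell_n)},+1)$ and hence everything.

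Second, the key claim $\ord^{-1}(0)=\{x^{\min}\}$. Your deduction that $y\neq x^{\min}$ would force $\ell_n=1$ for all $n\geq N$ is fine, but the concluding step fails twice: a unique incoming edge at each vertex beyond level $N$ does not bound the number of \emph{outgoing} edges, so $X_B$ need not be finite on that ground alone (one needs simplicity of the diagram to collapse $|V_m|$ to $1$ beyond level $N$); and a finite minimal system \emph{is} expansive, so ``contradicting expansivity combined with minimality'' does not bite -- the actual contradiction is with $X_B$ being infinite (equivalently with the standing normalisation that $\ell_n>1$ infinitely often, since otherwise $\Z_{(\ell_n)}$ is finite and $X_B$ is not a Cantor set). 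More to the point, the whole detour through tail equivalence is unnecessary: by the equivariance you already established, $\ord(y')=\ord(y)-1=-1$, i.e.\ every edge of $y'$ is maximal in its fibre, so properness of the order gives $y'=x^{\max}$ and hence $y=\varphi_\om(x^{\max})=x^{\min}$, a contradiction. Equivalently, $\ord^{-1}(0)$ is precisely the set of paths all of whose edges are minimal, which is a singleton by the definition of a proper order. With these repairs (and noting that a single singleton fibre already suffices for the maximality criterion, minimality of $(X_B,\varphi_\om)$ coming from simplicity and properness), your proof is complete and follows the standard route.
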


For Bratteli diagrams that have the equal path number property,  it is standard to  describe the ordering $\ord$ using a sequence of constant length morphisms.
To describe the ordering of the edge set $E_n$ between  $V_n$ and $V_{n+1}$, we use the morphism
 \begin{equation} \label{morphism-order}\theta^{(n)}:V_{n+1}\rightarrow V_n^{\ell_n}\end{equation}
which, when written as a concatenation of maps $\theta^{(n)}=\theta^{(n)}_0|\dots 
|\theta^{(n)}_{\ell_n -1}$, where $\theta^{(n)}_i:V_{n+1}\rightarrow V_n$
(similarly as in Section~\ref{sec: substitutions}), is given by
\[\theta^{(n)}_i(v) = s(e),\quad \mbox{with }\; e\in \ord^{-1}(i)\cap r^{-1}(v) ,\]
that is, the $i$-th morphism reads the source of the unique edge with range $v$ and label $i$.

If $(B,\omega)$ has the equal path number property, and $B'=(V',E')$ is the telescoping of $B$ to levels $(n_k)$, then $B'$  also has the equal path number property, and the corresponding morphism  
\begin{equation}\label{eq-morph}
\theta^{'(k)}:V_{k+1}' \rightarrow {V_k'}^{ \ell'_k}
\end{equation}   
is given by the composition
\[ \theta^{'(k)}= \theta^{(n_k)} \dots   \theta^{(n_{k+1}-1)}. \] 
We can again write $\theta^{'(k)}$ as a succession of maps {$\theta^{'(k)}_i:V_{k+1}'\rightarrow V_k'$ } which are each
a composition of maps $\theta\ho{n}_{i_n}:V_{n+1}\to V_n$, 
$n_k\leq n<n_{k+1}$ where the labels $i_n$ are those of the edges in $(B,\om)$ which constitute the edge $e\in E'_k$ with label $i$. More precisely, 
\[ \theta^{'(k)}_{i} 
= \theta^{(n_k)}_{i_{n_k}} \dots   \theta^{(n_{k+1}-1)}_{i_{n_{k+1}-1}} \quad \text{where} \quad
i = \sum_{n=n_k}^{n_{k+1}-1} i_n \prod_{m=n_k}^{n-1} \ell_{m}
 \]
(with the understanding that $\prod_{m=n_k}^{n_k-1} \ell_{m}=1$).
For instance, if we telescope level $n$ with level $n+1$, we get ${\theta'}\ho{n} = \theta\ho{n} \theta\ho{n+1}$ and
\[
{\theta'}_{i+j\ell_n}\ho{n} = \theta\ho{n}_i\theta\ho{n+1}_j \quad \text{for }
0\leq i < \ell_{n},\; 0\leq j < \ell_{n+1}.
\]

By telescoping if necessary, we can assume that  $\ell_n>1$ for each $n$. For, otherwise, $\ell_n=1$ for almost all $n$ and this implies that $X_B$ is 
not a Cantor space.

\subsubsection{Shift interpretation}\label{sec: shift interpretation}
There is a strong connection between expansive Bratteli-Vershik systems and shifts  \cite{DownarowiczMaass2008}.
Suppose $(X_B,\varphi_\om)$ is expansive and $k\in\N$ is such that the heads of length $k$ suffice to separate
orbits of distinct elements in $X_B$. 
Given a path $x\in X_B$, associate the bi-infinite sequence whose $n$-th entry consists of the head of length $k$ of $\varphi_{\om}^n(x)$. 
Let $(X_k,\sigma)$ be the shift whose space consists of all such sequences of length-$k$ heads. Then $(X_k,\sigma)$ and $(X_B,\phi_\w)$ are conjugate. 
By telescoping the diagram to the $k$-th level, we may assume that $k = 1$.
Given a Toeplitz Brattelli-Vershik system, we may (and will throughout this article) therefore assume that
\begin{equation}\label{def-projection-map} p_1:(X_B,\varphi_\om) \rightarrow (X_1,\sigma), \qquad (\gamma_\ell)_{\ell\geq 0} \mapsto {((\phi^n_\w(\gamma))_0)}_{n\in \Z}
\end{equation}
is a conjugacy.

If there is only one edge between the top vertex $v_0$ and each vertex in $V_1$, then the range map $r$ is a bijection between $E_0$ and $V_1$ so that $(X_B,\phi_\w)$ is conjugate to a shift over the alphabet $V_1$.
We denote the respective conjugacy also by $r$.
Observe that this situation can always be enforced by insertion of an extra level.  
Namely, we introduce an intermediate vertex set $\mathcal V$ between $V_0$ and $V_1$, which is in one-to-one correspondence with $E_0$. We introduce one edge from $v_0$ to each vertex of $\mathcal V$ and then for each $e\in E_0=\mathcal V$ an edge with source $e$ and range $r(e)\in V_1$ with the same order label as $e$. 
Clearly, the resulting Bratteli-Vershik system is topologically conjugate to the old one.
For Toeplitz Bratteli-Vershik systems this means that $\ell_0=1$. 

The following lemma is elementary to verify; its proof follows from the built-in recognizability of
Bratteli-Vershik systems, by which we mean that for each $n$, the towers defined by the first $n$ levels of $B$ form a partition of $p_1(X_B)$ where $p_1$ is defined in \eqref{def-projection-map} and which we assume, without loss of generality, to be a conjugacy. Given a sequence $(x_n)$ and $n<m$, we denote by $x_{[n,m[}$ the finite word $x_n x_{n+1}\ldots x_{m-1}$. 
\begin{lem} \label{lem-b} 
Let  $(X_B,\varphi_\om)$  be a  Toeplitz Bratteli-Vershik
system with characteristic sequence $(\ell_n)$ where $\ell_0=1$.
Set $\ell\ho{n} = \prod_{k=0}^{n-1}\ell_k$. 
For all $\gamma\in X_B$ and $n\in \N$, we have
$$ r\circ p_1(\gamma)_{[-z^{(n)},\ell^{(n)}-z^{(n)}[} = \theta^{(1)}\cdots \theta^{(n)}\circ r(\gamma_{n}),$$
where $z\ho{n} = \sum_{k=0}^{n-1} \ord(\gamma_k) \ell\ho{k}$.
In particular, 
\begin{equation}\label{eq-x}
 \{x_0  :  x\in  r\circ p_1\left(\w^{-1}(z)\right)\} \subset \bigcap_n   \theta^{(1)}_{z_1} \cdots \theta^{(n)}_{z_n} (V_{n+1}). 
 \end{equation}
\end{lem}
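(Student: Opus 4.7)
The proof rests on the standard Kakutani--Rokhlin tower structure carried by any Bratteli--Vershik system. Fix $n \in \N$. For each $v \in V_{n+1}$, the clopen set $T_v := \{\gamma \in X_B : r(\gamma_n) = v\}$ is a $\phi_\om$-tower whose base consists of those $\gamma \in T_v$ with $\ord(\gamma_k) = 0$ for all $0 \le k \le n$, and whose height equals the number of paths from $v_0$ to $v$. The equal path number property together with $\ell_0 = 1$ forces this height to be $\ell^{(n+1)}$, by the recursion that each vertex at level $k+1$ receives $\ell_k$ edges. The position of a generic $\gamma \in T_v$ in this tower is precisely the ``mixed-radix'' expression $\sum_{k=0}^{n} \ord(\gamma_k)\,\ell^{(k)}$, which is dictated by the Vershik order (carries propagate exactly as in the odometer $\Z_{(\ell_n)}$).

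The first key observation is that for every integer $j$ with the indicated range, $\phi_\om^j(\gamma)$ remains in $T_v$, sitting at position (tower-index plus $j$). Through the conjugacy $p_1$ of (\ref{def-projection-map}), this means $r\circ p_1(\gamma)_j = r\bigl((\phi_\om^j(\gamma))_0\bigr)$ is entirely determined by $v$ and this shifted tower-index; consequently, the whole window $r\circ p_1(\gamma)_{[-z^{(n)},\ell^{(n)}-z^{(n)}[}$ is exactly the word of $V_1$-labels encountered as one ascends the tower above $v$ from bottom to top.

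The second ingredient is an induction on $n$ identifying this tower-word with $\theta^{(1)}\cdots\theta^{(n)}(v)$. The base case is immediate from the very definition $\theta^{(n)}_i(v) = s(e)$ for $e\in r^{-1}(v)$ with $\ord(e)=i$: when $v\in V_2$, ordering $\gamma_1$ through $r^{-1}(v)$ while $\gamma_0$ is forced by $\ell_0=1$ produces exactly the sequence $\theta^{(1)}_0(v),\dots,\theta^{(1)}_{\ell_1-1}(v)$ at the top level. The induction step telescopes the towers: the tower over $v\in V_{n+1}$ decomposes, in the Vershik order, as the concatenation over $i=0,\ldots,\ell_n-1$ of the towers over $\theta^{(n)}_i(v)\in V_n$, and the inductive hypothesis expresses each sub-block as $\theta^{(1)}\cdots\theta^{(n-1)}(\theta^{(n)}_i(v))$; assembling them gives $\theta^{(1)}\cdots\theta^{(n)}(v)$, yielding the first identity.

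For the displayed inclusion (\ref{eq-x}), I would specialise to $\gamma\in\om^{-1}(z)$, so that by Lemma~\ref{lem-ord} we have $\ord(\gamma_k)=z_k$ for all $k$. Evaluating the just-proved identity at position $j=0$ gives
\[
x_0 \;=\; \bigl[\theta^{(1)}\cdots\theta^{(n)}(r(\gamma_n))\bigr]_{z^{(n)}} \;=\; \theta^{(1)}_{z_1}\cdots\theta^{(n)}_{z_n}\bigl(r(\gamma_n)\bigr) \;\in\; \theta^{(1)}_{z_1}\cdots\theta^{(n)}_{z_n}(V_{n+1}),
\]
where the middle equality is the analogue of \eqref{eq-morph} expanding the positional index $z^{(n)}=\sum_{k} z_k\,\ell^{(k)}$ into its ``digits''. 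Intersecting over $n$ delivers the claim. The only genuine obstacle is the bookkeeping---keeping the tower position $z^{(n)}$, its digit expansion, and the indexed morphism composition $\theta^{(1)}_{z_1}\cdots\theta^{(n)}_{z_n}$ in lockstep, with the convention $\ell_0=1$ suppressing the zeroth digit throughout.
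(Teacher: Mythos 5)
Your argument is correct and is essentially the proof the paper has in mind: the authors dispatch the lemma in one line by appealing to the Kakutani--Rokhlin tower partition determined by the first $n$ levels (built-in recognizability), and your proposal simply writes out that tower/odometer bookkeeping together with the telescoping induction identifying the tower word with $\theta^{(1)}\cdots\theta^{(n)}$. The only wrinkle is an off-by-one inherited from the statement itself: the tower over $r(\gamma_n)\in V_{n+1}$ has height $\ell^{(n+1)}$ and position $z^{(n+1)}$, so a window of length $\ell^{(n)}$ really matches the level-$n$ tower word $\theta^{(1)}\cdots\theta^{(n-1)}(r(\gamma_{n-1}))$ (equivalently one should shift the window to length $\ell^{(n+1)}$); this is harmless for the conclusion \eqref{eq-x} because the images $\theta^{(1)}_{z_1}\cdots\theta^{(n)}_{z_n}(V_{n+1})$ are nested in $n$.
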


\subsubsection{Toeplitz Bratteli Vershik diagrams for constant length substitutions}\label{sec:toeplitz-constant}
 A {\em stationary}  Toeplitz Bratteli-Vershik system is one where for all $n\geq1$, $V_n=V_1$, $E_n=E_1$, and the order structure on $E_n$ is the same as that on $E_1$. 
In this case the morphisms $\theta^{(n)}$ of (\ref{eq-morph}) all agree and hence  
define a single morphism $\theta:V_1\to V_1^{\ell_1}$. If the range map is a bijection between $E_0$ and $V_1$ (there is a single edge between $v_0$ and each of the vertices of $V_1$) we can identify the space $X_1$ with the substitution shift space $X_\theta$ of $\theta$. In other words, a stationary  Toeplitz Bratteli-Vershik system for which $E_0\cong V_1$ defines a primitive substitution of constant length. 

The converse, associating a Toeplitz Bratteli Vershik system to a primitive substitution $\theta$ of constant length $\ell$ over an alphabet $\Aa$ is subtle. The natural approach \cite{Livshits-Vershik},
which consists of defining 
a stationary  Bratteli-Vershik system by setting $V_n=\Aa$ and defining the edges and their order with $\theta^{(n)}=\theta$ as in (\ref{eq-morph}), works well if all substitution words start with the same letter and end with the same letter, that is,
$\theta_0(a)$ and $\theta_{\ell-1}(a)$ are independent of $a$. 
Indeed, if that is the case, then the order on the Bratteli diagram is proper.
However, for general $\theta$, the respectively defined order may fail to be proper, and the arguments in \cite{Livshits-Vershik} only give a measurable Bratteli-Vershik representation.
While there are classical methods to rewrite the substitution to obtain a stationary Brattelli-Vershik representation for $(X_\theta,\sigma)$ \cite{DurandHostSkau1999, Forrest1997}, those procedures do not necessarily give a Toeplitz Bratteli-Vershik representation.
Instead one needs to follow the  approach of \cite{GjerdeJohansen}  to obtain a properly ordered Toeplitz Bratteli-Vershik system such that $(X_1,\sigma)$ equals $(X_\theta, \sigma)$.

\subsection{The extended Bratteli diagram}\label{extended-BV}
In this section we introduce the notion of the extended Bratteli diagram, and its essential thickness, concepts which are fundamental for the proofs of Theorem~\ref{thm:0} and Theorem~\ref{thm: A}.
The extended Bratteli diagram can be seen as a generalisation of the graph $\mathcal G_\theta$,  introduced in Section \ref{odometer-toeplitz-substitution} for constant length substitution shifts.  

\begin{definition}\label{extended-diagram-definition}
Let  $(B, \om)$ be  an ordered Bratteli diagram with the equal path number property and characteristic sequence $(\ell_n)_{n \geq 0}$.
The extended Bratteli diagram is an infinite graph  which satisfies the properties (1)--(3) of a Bratteli diagram, but not necessarily (4). The 
 {\em extended Bratteli diagram} $\tilde B=(\tilde V_n,\tilde E_n)$ associated to $B$ has 
the following vertices and edges: 
\begin{enumerate}
\item The level $n$ vertex set $ \tilde V_n$ is the set of all nonempty subsets of $V_n$.
\item For $0\leq i < \ell_n$,  $\tilde E_n$ contains an edge 
labelled $i$  with source $A\in \tilde V_{n}$ and range $B\in \tilde V_{n+1}$ if  and only if $\theta^{(n)}_{i}(B) = A$. 
\end{enumerate}
\end{definition}
Identifying singleton sets with the element they contain, we see that $\tilde B$ contains 
$B$ as a sub-diagram. The labelling of the edges defines an order on $\tilde B$ which extends the order on $B$. We also consider the space $X_{\tilde B}$ of infinite paths over $\tilde B$ starting at the top vertex $v_0$.
Clearly, $X_{\tilde B}$ contains $X_B$. The edge order map $\ord$ from Lemma~\ref{lem-ord} extends to a map from $X_{\tilde B}$ to the maximal equicontinuous factor $\Z_{\ell_n}$ 
which we denote by the same letter $\ord$.
Due to the lack of property (4), the vertices in the extended diagram need not to have any outgoing edges. Infinite paths ignore such vertices and we call vertices {\em extendable} if they are traversed by a path in $X_{\tilde B}$.

To a path $\gamma$ in $X_{\tilde B}$ we associate the sequence of maps 
\begin{equation}\label{eq-theta}
\th{n}{\gamma}:=\th{n}{\ord(\gamma)_n  }: \tilde V_{n+1}\to \tilde V_n
\end{equation}
and the sequence of subsets $A_n:=s(\gamma\ho{n}) \subset V_n$. Then 
$\th{n}{\ord(\gamma)_n  }(A_{n+1})=A_{n}$. 
Recall that we can arrange for $\ell_0=1$ in the characteristic sequence of the original  
Toeplitz Bratteli system. This implies that in the extended Bratteli diagram the top vertex is linked to any vertex of $\tilde V_1$ by exactly one edge.  

We remark that in the case where the ordered diagram $B$ is {\em stationary}, that is, $E_n=E_1$ and $\theta^{(n)}=\theta^{(1)}$ for $n\geq 1$, then the graph $\mathcal G_{\theta^{(1)}}$ defined in Section \ref{odometer-toeplitz-substitution} 
is an abbreviated form of the extended Bratteli diagram $(\tilde B ,\tilde \omega)$. For, the extended Bratteli diagram will also be stationary and so can be described by the edge and order structure of its first level. The only other difference is that in $\mathcal G_{\theta^{(1)}}$, we chose to exclude vertices indexing one-element sets, as $\mc G_{\theta^{(1)}}$ is only to  identify the singular fibres. 

Note that since $|\th{n}{i}(A)|\leq |A|$ for each $n$ and $i$, a path in $X_{\tilde B}$ must pass through vertices of non-decreasing cardinality. This motivates the following definition.

\begin{definition}
If
 a path of $X_{\tilde B}$ 
eventually goes through vertices $A_n\in \tilde V_n$ with $|A_n|=k<\infty$ for all $n$ large, we will say that  the path has {\em thickness $k$}. Otherwise we say 
that the path has \emph{infinite thickness}.
\end{definition}
If the diagram has finite rank, then there are no paths with infinite thickness. 
We denote by $X_{\tilde B}^{k}$ the infinite paths of thickness $k\in \N\cup\{\infty\}$. 
Note that the subdiagram  $X_{\tilde B}^{1}$ corresponds to the original path space $X_B$.


The following lemma tells us  that $z\in  \Z_{(\ell_n)}$ is singular if and only if it is the image of a path of thickness $k>1$.
Let $\thick(\gamma)$ denote the thickness of the path $\gamma$.
\begin{lem}\label{lem-x} 
Let $(X_B,\varphi_\om)$ be a Toeplitz Bratteli-Vershik system  with extended path space $X_{\tilde B}$. Let $z\in\Z_{(\ell_n)}$.
Then
$$|\{\gamma\in X_{B} : \ord(\gamma)=z\}| = \sup\{\thick(\tilde\gamma) : \tilde\gamma\in X_{\tilde B}, \ord(\tilde\gamma) = z\}.$$
In particular, the set of singular points in $\Z_{(\ell_n)}$ coincides with the union 
$\bigcup_{j\geq 2}\ord(X^j_{\tilde B})$, and 
the rank of the Toeplitz Bratteli-Vershik system is an upper bound for the maximal number of elements in a fibre of the factor map to the maximal equicontinuous factor.
\end{lem}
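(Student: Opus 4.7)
The plan is to translate both sides of the equality into combinatorial data read off from the maps $\theta^{(n)}_{z_n}\colon V_{n+1}\to V_n$, and then to transfer information between $X_B$-paths over $z$ and the level-wise states of extended paths over $z$.

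First I would set up a combinatorial dictionary. A path $\gamma\in X_B$ with $\ord(\gamma)=z$ is uniquely determined by its sequence of ranges $v_n=r(\gamma_{n-1})$, because each $\gamma_{n-1}$ is the unique element of $r^{-1}(v_n)$ carrying the prescribed label $z_{n-1}$; coherence with the source map translates to $\theta^{(n-1)}_{z_{n-1}}(v_n)=v_{n-1}$. Likewise, an extended path $\tilde\gamma$ with $\ord(\tilde\gamma)=z$ corresponds to a sequence of nonempty subsets $A_n\subseteq V_n$ with $A_0=\{v_0\}$ and $\theta^{(n)}_{z_n}(A_{n+1})=A_n$. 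Crucially, $|A_n|\le|A_{n+1}|$, so these cardinalities are nondecreasing and $\thick(\tilde\gamma)=\lim_n|A_n|\in\N\cup\{\infty\}$ is well defined.

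For the inequality $\le$, I would fix $k$ pairwise distinct paths $\gamma^{(1)},\ldots,\gamma^{(k)}\in X_B$ over $z$ with vertex sequences $(v_n^{(i)})$ and show that eventually the $v_n^{(i)}$ are pairwise distinct in $i$. The key observation is that if $v_n^{(i)}=v_n^{(j)}$ at some level $n$, then the uniqueness of the labelled edge with prescribed range forces $\gamma_{n-1}^{(i)}=\gamma_{n-1}^{(j)}$, which iterates backward to give $v_m^{(i)}=v_m^{(j)}$ for all $m\le n$. Letting $N_{ij}$ be the smallest level at which $\gamma^{(i)}$ and $\gamma^{(j)}$ disagree as edges, this forces $v_m^{(i)}\neq v_m^{(j)}$ for every $m\ge N_{ij}+1$. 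Taking $N^\ast=\max_{i\ne j}N_{ij}+1$, the sets $A_n:=\{v_n^{(i)}:1\le i\le k\}$ satisfy $|A_n|=k$ for $n\ge N^\ast$ and form an extended path of thickness $k$.

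For the reverse inequality, I would run a lifting argument on an extended path $(A_n)$ of finite thickness $k$, picking $N$ large enough that $|A_n|=k$ for $n\ge N$. Then $\theta^{(n)}_{z_n}\colon A_{n+1}\to A_n$ is a bijection whenever $n\ge N$, so each element $v\in A_N$ extends uniquely to a coherent vertex sequence: backward via $v_{n-1}=\theta^{(n-1)}_{z_{n-1}}(v_n)$ for $n\le N$ (which automatically lands in $A_0=\{v_0\}$), and forward via the bijective restrictions $(\theta^{(n)}_{z_n}|_{A_{n+1}})^{-1}$ for $n\ge N$. Each such sequence yields a path in $X_B$ over $z$, and the $k$ distinct starting vertices in $A_N$ produce $k$ distinct paths. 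When the thickness is infinite, applying this argument at arbitrarily deep stabilisation levels yields arbitrarily many distinct $X_B$-paths.

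The two \emph{in particular} assertions then follow immediately: $z\in\Z_{(\ell_n)}$ is singular iff $|\ord^{-1}(z)\cap X_B|\ge 2$, which by the main identity is equivalent to $z\in\bigcup_{j\ge 2}\ord(X_{\tilde B}^j)$; and after telescoping a rank-$d$ Toeplitz Bratteli-Vershik system so that $|V_n|\le d$ for all $n$, every extended path has thickness at most $d$, and so every fibre of the factor map to the maximal equicontinuous factor has at most $d$ elements. The chief technical care will go into the lifting step for the $\ge$ direction, particularly into choosing the stabilisation level $N$ coherently and handling the infinite-thickness case; the enabling fact throughout is that once $|A_n|$ stabilises, $\theta^{(n)}_{z_n}|_{A_{n+1}}$ becomes bijective, after which everything reduces to routine bookkeeping of edge labels.
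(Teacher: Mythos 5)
Your proof is correct and follows essentially the same route as the paper's: $k$ distinct paths in $X_B$ over $z$ must eventually pass through $k$ distinct vertices (because agreement of a vertex together with equal labels forces agreement of the whole head), so the sets $A_n$ of traversed vertices give an extended path with $|A_n|\geq k$, while conversely an extended path with $|A_n|\geq k$ eventually lifts to at least $k$ paths in $X_B$ over $z$. One small wording fix: in the infinite-thickness case there are no ``stabilisation levels,'' but your lifting still goes through because $\theta^{(n)}_{z_n}$ maps $A_{n+1}$ onto $A_n$, so any of the at least $k$ elements of $A_N$ extends forward by successively choosing preimages (no bijectivity needed), exactly as the paper implicitly does.
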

\begin{proof} 
Recall the definition of the maps $\th{n}{\tilde\gamma}$ and subsets $A_n:=s(\gamma\ho{n}) \subset V_n$ associated to a path  
$\tilde\gamma\in X_{\tilde B}$ in \eqref{eq-theta}. 
If $\thick(\tilde\gamma)\geq k$, then there exists $n_0$ such that  $|A_{n}|\geq k$ for  $n\geq n_0$. 
 Since $  \th{n}{\tilde\gamma}     (A_{n+1})=A_{n}$, there are at least $k$ paths $\gamma$ in the original path space $X_B$ such that $\ord(\tilde\gamma) = \ord(\gamma)$.
 This shows the inequality ``$\geq$".

Now suppose that $|\{\gamma\in {X_{B}}: \ord(\gamma)=z\}| \geq k$, so that there are at least $k$ distinct paths $\gamma {\in X_{ B}}$ with the same edge labels. We need to make sure that there is at least one $n$ such that they go through $k$ different vertices at level $n$. Note that if two paths with equal edge labels agree on a vertex at level $n$ then their head agrees up to level $n$. Thus $k$ distinct paths must at some level go through $k$ distinct vertices. This implies that there is an $A_{n}$ with $|A_{n}|\geq k$ which is a vertex of a path in $X_{\tilde B}$ which has edge labels $z$. 
Thus $\sup\{\thick(\tilde\gamma):\tilde\gamma\in X_{\tilde B}, \ord(\tilde\gamma) = z\}\geq k$.
\end{proof}

\subsection{Thick Toeplitz shifts}
A pair of {\em parallel edges} in $\tilde E_n$ is a pair of edges 
$(e_{1}, e_{2})\in \tilde E_n\times \tilde E_n $ with the same source and range but  distinct labels according to the order.
A {\em double path} in $X_{\tilde B}$ is a pair of paths consisting of parallel edges at each level $n > 0$. 
We write
$\db{\gamma}=( \db{\gamma}_n)=(\gamma_{n,1}, \gamma_{n,2})$ to denote a double path. 
\begin{definition}\label{def:thick}
The largest $k$ such that $X_{\tilde B}^{k}$ is uncountable is called the {\em essential thickness} of $(X_B,\varphi_\omega)$.
We say that $(X_B,\varphi_\omega)$ is {\em thick} if 
its essential thickness $k$ is strictly larger than $1$ and finite,  and if there is a double path of thickness $k$.
A Toeplitz shift is {\em thick} if it has a  thick Toeplitz Bratteli-Vershik representation.
\end{definition}
\renewcommand{\thmcontinues}[1]{continued}
\begin{myexp}[continues=ex:examplezero]  \label{ex:first} 
 To illustrate the above notions  we apply them to the first substitution in Example \ref{ex:examplezero}. 
 While this example is not sensitive to some of the subtleties that we will meet later (because of its stationarity) it can at least be described explicitly.

Recall that $\theta: \{a,b,c \}\rightarrow \{a,b,c \}^{4}$ is the substitution 
$$\begin{array}{c c l}
a & \mapsto & aaca\\
b & \mapsto & abba \\
c & \mapsto  & aaba. \\
\end{array}
$$
Since all substitution words begin and end on $a$, the approach of \cite{Livshits-Vershik} to define the Toeplitz Bratteli Vershik system works here and it is not difficult to derive the extended system as well.
The extended Bratteli diagram is stationary and we have drawn one level
in Figure \ref{thickness-not-rank-picture-1}. We follow the convention of reading levels from top to bottom.
  Note that the vertices $\{a,b,c\}$ and $\{ a,c\}$ have no outgoing edges, so no infinite path will go through them and, in particular, there are no paths of thickness $3$. There are uncountably many paths of thickness $2$, namely those which keep going through vertices $\{a,b\}$ or $\{b,c\}$. 
Hence all singular fibres consist of  two elements and the essential thickness is $2$.

If we telescope the extended Bratteli diagram to even levels we will find that there are two edges between two consecutive vertices  $\{a,b\}$. These two edges form a pair of parallel edges and consequently the telescoped diagram admits a double path of thickness $2$.
In particular, the Toeplitz Bratteli-Vershik system is thick.

Notice the connections between Figure  \ref{thickness-not-rank-picture-1} and the graph $\mathcal G_\theta$ in Figure \ref{wrong_graph}.
The infinite paths on $\mathcal G_\theta$ correspond to paths in the stationary extended Bratteli diagram which start at the top vertex $v_0$ (the first level consists of one edge between $v_0$ and each of the seven vertices of $\tilde V_1$) and go downwards without ever passing through a vertex which is a singleton nor through a vertex which does not have an outgoing edge. The fact that the telescoped extended Bratteli diagram admits a double path of thickness $2$ going through the vertices $\{a,b\}$ is equivalent to the fact that the vertex $\{a,b\}$ of $\mathcal G_\theta$ belongs to two distinct cycles.
\end{myexp}
\begin{myexp} 
[continues=ex:examplezerob] 
It is not difficult to derive an extended Bratteli diagram for the substitution of Example~\ref{ex:examplezerob} as well.  What one will find is that there is one edge between two consecutive vertices $\{a,b,c\}$ and one edge between two consecutive vertices $\{b,c\}$. It follows that the diagram has thickness $3$.
However, there is only one path which goes infinitely often through $\{a,b,c\}$ and only countably many which go infinitely often through $\{b,c\}$. It follows that the essential thickness of the diagram is $1$. The system is therefore not thick.
\end{myexp}

\definecolor{red(munsell)}{rgb}{0.95, 0.0, 0.24}
\definecolor{mediumelectricblue}{rgb}{0.01, 0.31, 0.59}
 \definecolor{lavender(floral)}{rgb}{0.71, 0.49, 0.86}
\definecolor{lavenderblue}{rgb}{0.8, 0.8, 1.0}
\definecolor{darkgreen}{rgb}{0.0, 0.2, 0.13}
\definecolor{cadmiumgreen}{rgb}{0.0, 0.42, 0.24}
  \begin{center}
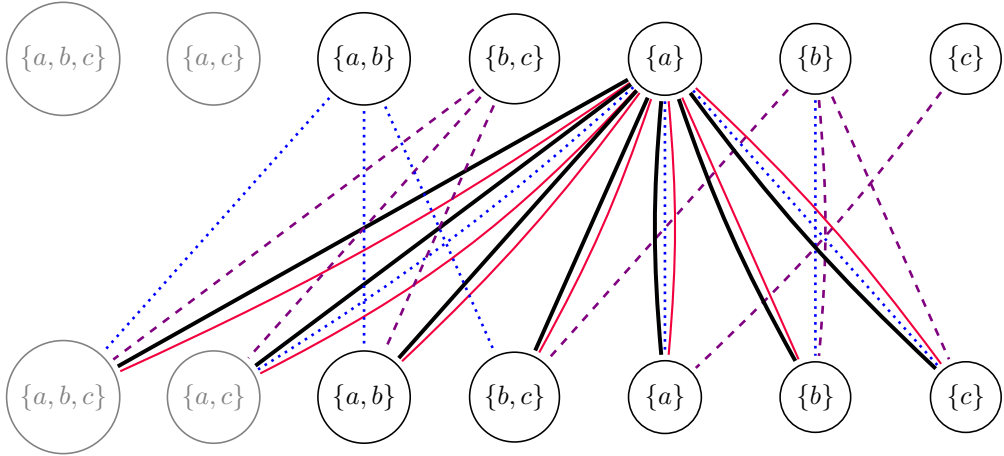
\begin{figure}
\begin{tikzpicture}
[-,>=stealth',shorten >=2pt, shorten <=2pt,auto,node distance=2.0cm,semithick] 
 \tikzstyle{every state}=[fill=white,draw=black,text=black]
  \node[state, draw=gray]         	(A)  		[text=gray]	{$\{ a,b,c\}$};
  \node[state, draw=gray]         	(a)  [right of=A, text=gray]	{$\{ a,c\}$};
  \node[state]         	(B)  [right of=a]	{$\{a,b\}$};
  \node[state] 		(C)  [right of=B]       {$\{b,c \}$};
  \node[state] 		(D)  [right of=C]       {$\{a\}$};
  \node[state] 		(E)  [right of=D]       {$\{b\}$};
  \node[state] 		(F)  [right of=E]       {$\{c\}$};

  \node[state, draw=gray]         	(X)   at (0,-4.5)	[text=gray]	{$\{ a,b,c\}$};
  \node[state, draw=gray] 		(x)   [right of=X, text=gray]      {$\{a,c \}$};  
  \node[state]        	(B1)  [right of=x]     	{$\{a,b\}$};
  \node[state] 		(C1)  [right of=B1]     {$\{b,c \}$};
  \node[state] 		(D1)  [right of=C1]     {$\{a\}$};
  \node[state] 		(E1)  [right of=D1]     {$\{b\}$};
 \node[state] 		(F1)  [right of=E1]     {$\{c\}$};

      \path

  (D) edge [line width=1.5, ]        (X)     
         (D) edge [line width=0.8, bend left=5,  color=red(munsell)]         (X)     
         (B) edge [line width =1, color=blue, dotted] (X)
           	(C) edge [line width =1, dashed, color=violet] (X)

	(D) edge [line width=1,  bend left=5, color=blue, dotted]       (x) 
	(D) edge [line width=0.8,  bend left=10, color=red(munsell)]        (x) 
	  (D) edge [line width=1.5,  ]         (x) 
	  (C) edge [line width =1,  color=violet, dashed, ] (x)

 (B) edge  [line width =1, color=blue, dotted]  (B1)
         (C) edge  [line width =1, dashed, color=violet] (B1)
        (D) edge [line width=1.5,color=black]          (B1)     
          (D) edge [line width=0.8,  bend left=5,  color=red(munsell)]           (B1)  
        
	(D)  edge [line width=1.5, color=black]        (C1) 
	(D)  edge [line width=0.8, bend left=5,  color=red(munsell)]      (C1) 
	(B) edge  [line width =1, color=blue, dotted]  (C1)
        (E) edge [line width =1, dashed, color=violet] (C1)

	(D)  edge [line width=1.5, bend right=5]               (D1) 
	(D)  edge [line width=0.8,  color=red(munsell), bend left=5]                (D1) 
	  (D) edge [line width=1, color=blue, dotted]  (D1)
(F)  edge [line width=1, bend left=5, dashed, color=violet]             (D1)

	(D) edge [line width=0.8, color=red(munsell)]   	(E1)   
	(D) edge [line width=1.5, bend right=5]   	(E1)   
	  (E) edge [line width =1, color=blue, dotted] (E1)     
	     (E) edge [line width =1, dashed, color=violet, bend left=5] (E1)

        (D) edge [line width=1.5, bend right=5]      (F1)       
        (D) edge [line width=1, color=blue, dotted, ]      (F1)    
        (D) edge [line width=0.8,  color = red(munsell), bend left=5]      (F1)    
	
       (E) edge [line width =1, dashed, color=violet] (F1);

\end{tikzpicture}
\caption{\label{thickness-not-rank-picture-1}One level of the stationary extended Bratteli diagram of  Example \ref{ex:first}. The order is indicated through colour:
black, blue dotted, violet dashed and red edges correspond to order label 0,1,2 and 3 respectively. 
The grey vertices are not extendable. Red edges are finer than black edges if viewed without colour.}
\end{figure}
\end{center}

\begin{myexp}\label{exp:oxtoby}
Oxtoby \cite{Oxtoby:1952} described a family of minimal binary Toeplitz shifts that are not uniquely ergodic and hence cannot be tame. 
We describe the Bratteli-Vershik representations for the one-sided versions of this family to maximise the similarity to his original description. Given a sequence $(\ell_n)$ of natural numbers, define the substitutions
 \begin{align*}
  a&\stackrel{\theta^{(n)}}{\mapsto} ab^{\ell_n -1}\\
  b&\stackrel{\theta^{(n)}}{\mapsto} aa^{\ell_n-1},
  \end{align*}
  and define an ordered Bratteli diagram with the sequence $\{\theta^{(n)} \}$ as in \eqref{morphism-order}; see Figure  \ref{figure:oxtoby} for two examples, the one on the left  with $\ell_1=\ell_2=2$, the second with $\ell_1=3$, $\ell_2=5$.

  Note that these ordered Bratteli diagrams each have two maximal paths and one minimal path; this means that we cannot define a Vershik map which is a homeomorphism. Nevertheless we can still define a continuous Vershik map by sending the two maximal paths to the unique minimal path. This one-sided Bratteli-Vershik system is conjugate to the one-sided shift defined by Oxtoby.
  Oxtoby showed that if  $(\ell_n)$ grows fast enough, in particular if $\sum \frac{\ell_{k-1}}{\ell_k}<1$, then the resulting system is not uniquely ergodic, and thus it cannot be tame.
  In the right hand side of Figure \ref{oxtoby-BV}, one clearly sees the beginning of a double path of thickness two. This should be contrasted with the stationary figure on the left, which is a one-sided representation of the {\em period-doubling} substitution shift, and which has thickness one, and so is tame, as we will see below.
\end{myexp}

 \begin{figure}\label{figure:oxtoby}
 \hspace{-58pt}
\begin{minipage}{0.5\textwidth}
   \begin{tikzpicture}
[-,>=stealth',shorten >=2pt, shorten <=2pt,auto,node distance=2.0cm,semithick] 
 \tikzstyle{every state}=[fill=white,draw=black,text=black]
   \node[state]         	(X)  			{*};
  \node[state]         	(A)  	 [below left of=X]		{$\{a\}$};
  \node[state]         	(B)  [below right of=X]	{$\{b\}$};
   \node[state]         	(C)  [ right of=B]	{$\{a,b\}$};
  \node[state]        	(A1)  [below of=A]     	{$\{a\}$};
  \node[state] 		(B1)  [below of=B]     {$\{b\}$};
   \node[state] 		(C1)  [right of=B1]     {$\{a,b\}$};
  \node[state] 		(A2)  [below of=A1]     {$\{a\}$};
  \node[state] 		(B2)  [below of=B1]     {$\{b\}$};
   \node[state] 		(C2)  [right of=B2]     {$\{a,b\}$};
 \path
  (X) edge [line width=1, ]        (A)
   (X) edge [line width=1, ]        (B)
     (X) edge [line width=1, ]        (C)
    (A) edge [line width=1.5, ]         (A1)     
         (A1) edge [line width=1.1, color=blue, dotted ]          (B)     
          (B1) edge [line width =1.5, bend left = 4]   (A)
           	(B1) edge [line width =1.1, bend right =4,  color=blue, dotted]   (A)
	(A1) edge [line width=1.5, ]        (A2)     
         (A2) edge [line width=1.1, color=blue, dotted ]  node[yshift=-45pt, xshift=8pt, color=black]  { $\vdots$}  
           node[yshift=-57pt, xshift=8pt, color=black]  { $\vdots$}  
 (B1)

           (B2) edge [line width =1.5, bend left = 4]   (A1)
           	(B2) edge [line width =1.1, bend right =4, color=blue, dotted] (A1)
	 (C) edge [line width=1.1, color=blue, dotted ] (C1)
	  (C1) edge [line width=1.1, color=blue, dotted ]  (C2)    
	   (C1) edge [line width=1.5, ]  (A)  
	    (C2) edge [line width=1.5, ] (A1);     
           \end{tikzpicture}
          \end{minipage}
  \begin{minipage}{0.3\textwidth}
  \begin{tikzpicture}
[-,>=stealth',shorten >=2pt, shorten <=2pt,auto,node distance=2.0cm,semithick] 
 \tikzstyle{every state}=[fill=white,draw=black,text=black]
   \node[state]         	(X)  	 	{*};
  \node[state]         	(A)  	 [below left of=X]	{$\{a\}$};
  \node[state]         	(B)  [below right of=X]	{$\{b\}$};
   \node[state]         	(C)  [right of=B]	{$\{a,b\}$};

   \node[state]        	(A1)  [below of=A]     	{$\{a\}$};
  \node[state] 		(B1)  [below of=B]     {$\{b\}$};
   \node[state]         	(C1)  [right of=B1]	{$\{a,b\}$};

  \node[state] 		(A2)  [below of=A1]     {$\{a\}$};
  \node[state] 		(B2)  [below of=B1]     {$\{b\}$};
   \node[state]         	(C2)  [right of=B2]	{$\{a,b\}$};

 \path

  (X) edge [line width=1, ]        (A)
   (X) edge [line width=1, ]        (B)
    (X) edge [line width=1, ]    	    (C)
    (A1) edge [line width=1.5]      (A)     
         (A1) edge [line width=0.8, color=blue, dotted]        (B)     
         (A1) edge [line width=0.8, bend right=5, dashed, color=violet]      (B)     
                 (B1) edge [line width =1.5,  bend left=5] (A)
           	(B1) edge [line width =1, color=blue, dotted]  (A)
	(B1) edge [line width =1, bend right =5, dashed, color=violet]   (A)
	(C1) edge [line width =1.5]  (A)
	(C1) edge [line width =1, bend left=5, color=blue, dotted]   (C)
	(C1) edge [line width =1, bend right=5, color=violet, dashed]  (C)
		(A1) edge [line width=1.5, ]         (A2)

         (A2) edge [line width=0.8, bend left=5, color=blue, dotted]         (B1)     
         (A2) edge [line width=0.8, dashed, color=violet  ]      (B1)     
         (A2) edge [line width=0.8, bend right=5, densely dotted, color=darkgreen]      (B1)     
         
         (A2) edge [line width=0.8, bend right=10, color=red]      node[yshift=-43pt, xshift=8pt, color=black]  { $\vdots$}  
           node[yshift=-55pt, xshift=8pt, color=black]  { $\vdots$}     
           (B1)   
           (B2) edge [line width =1.5,bend left=10]  (A1)
             (B2) edge [line width =1, bend left=5, color=blue, dotted ]   (A1)
               (B2) edge [line width =1, dashed, color=violet ]   (A1)
                 (B2) edge [line width =1,bend right=5, densely dotted, color=darkgreen]  (A1)
                   (B2) edge [line width =1,bend right=10, color=red]   (A1)
                                       (C2) edge [line width =1.5]  (A1)
                     (C2) edge [line width =1,  bend left=5, color=blue, dotted] (C1)
           	(C2) edge [line width =1,color=violet, dashed ]  (C1)
	(C2) edge [line width =0.8, bend right =5, densely dotted, color=darkgreen]   (C1)
	(C2) edge [line width =0.8, bend right =10, color=red]  (C1);
           \end{tikzpicture}
          
           \end{minipage}
           \caption{\label{oxtoby-BV} 
                On the right,   we see the first levels of the extended Bratteli diagram of  the one-sided shifts for Example \ref{exp:oxtoby} with $\ell_1=3$ and $\ell_2=5$. The order is indicated through colour:
black, blue dotted, violet dashed, green densely dotted, and red edges correspond to order label $0,1,2,3$ and $4$ respectively. 
Red edges are finer than black edges if viewed without colour. As more levels are added, there are increasingly many edges between vertices labelled $\{a,b\}$ in consecutive levels.
This is to be contrasted with the  one-sided {\em period-doubling} substitution shift (on the left), where $\ell_n=2$ for all $n$, and which is tame (again, black  and blue dotted edges correspond to order label $0$ and $1$, respectively). It has thickness one.
}

           \end{figure}
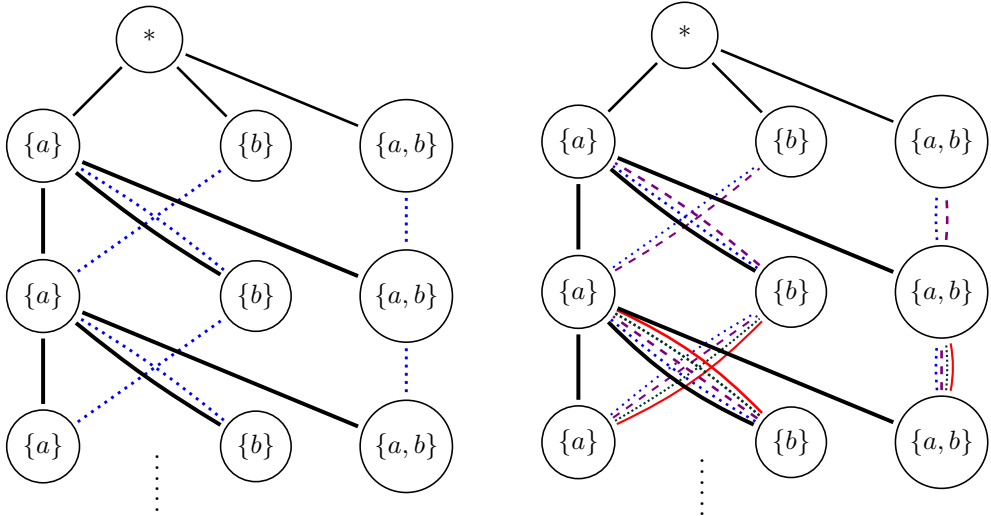

Recall that $x\in X$ is a {\em condensation point } if every neighbourhood of $x$ is uncountable.  Let $\mathcal C\subset X$ be the set of its condensation points. The Cantor-Bendixson theorem gives that for second countable spaces, $X\backslash \mathcal C$ is countable.

\begin{lem}\label{lem:thick-double-path} 
Let $(X_B,\varphi_\w)$ be a Toeplitz Bratteli-Vershik system.
\begin{enumerate}
\item If all paths in $X_{\tilde B}$ have finite thickness, the maximal equicontinuous factor contains uncountably many singular points if and only if there is $j>1$ such that $X_{\tilde B}^{j}$ is uncountable.
\item If $X_{\tilde B}^{j}$ contains a double path, then it is uncountable.
\item If $X_{\tilde B}$ has finite rank and $X_{\tilde B}^{j}$ is uncountable, then $X_{\tilde B}^j$ contains a double path.
\end{enumerate}
\end{lem}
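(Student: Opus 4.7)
The plan is to treat the three items separately, with (2) being immediate and the substantive content in (1)$\Leftarrow$ and (3). Throughout I write $A_n(\tilde\gamma) = s(\gamma^{(n)})$ for the source subset at level $n$ of a path $\tilde\gamma \in X_{\tilde B}$, so $X_{\tilde B}^j$ consists of those $\tilde\gamma$ with $|A_n(\tilde\gamma)| = j$ for all sufficiently large $n$.

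For (2), a double path $(\gamma_{n,1}, \gamma_{n,2})_{n \geq 1}$ yields $2^{\mathbb{N}}$ distinct paths in $X_{\tilde B}$ by independently choosing $\gamma_{n,1}$ or $\gamma_{n,2}$ at each level; all of these share the common vertex sequence of the double path and hence belong to $X_{\tilde B}^j$. For (1)$\Rightarrow$, the finite-thickness hypothesis gives $X_{\tilde B} = \bigcup_{j \geq 1} X_{\tilde B}^j$, and by Lemma \ref{lem-x} the singular set equals $\bigcup_{j \geq 2} \ord(X_{\tilde B}^j)$; a countable union is uncountable only if some term is, so some $X_{\tilde B}^j$ with $j \geq 2$ must be uncountable.

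For (1)$\Leftarrow$, the key step I would prove is that each fibre $\ord^{-1}(z) \cap X_{\tilde B}^j$ is finite. Set $M(z) := |\ord^{-1}(z) \cap X_B|$, which is finite by Lemma \ref{lem-x} and the hypothesis. The $M(z)$ singleton paths over $z$, with vertex sequences $(a_n^i)_n$ for $i = 1, \ldots, M(z)$, become pairwise distinct at all large $n$: any equality $a_n^i = a_n^k$ would propagate backward via $a_{n-1}^i = \theta^{(n-1)}_{z_{n-1}}(a_n^i)$ and collapse the two paths. Given $\tilde\gamma \in X_{\tilde B}^j$ with $\ord(\tilde\gamma) = z$, for $n$ large enough that $|A_n(\tilde\gamma)| = j$ the restriction $\theta^{(n)}_{z_n}\vert_{A_{n+1}(\tilde\gamma)}$ is a bijection onto $A_n(\tilde\gamma)$; following any element of $A_n(\tilde\gamma)$ downward through these bijections, and upward via $\theta$, produces a singleton path over $z$. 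Hence $A_n(\tilde\gamma) \subseteq \{a_n^1, \ldots, a_n^{M(z)}\}$ for $n$ large, and the whole path is determined by the corresponding $j$-element index set. This bounds the fibre by $\binom{M(z)}{j}$, and combined with uncountability of $X_{\tilde B}^j$ forces uncountability of $\ord(X_{\tilde B}^j)$, which lies in the singular set by Lemma \ref{lem-x}.

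Part (3) is the hardest. Finite rank provides, after telescoping, a uniform bound $|V_n| \leq d$ and hence $|\tilde V_n| \leq 2^d$, so the preceding fibre estimate is uniform and uncountability of $X_{\tilde B}^j$ already yields uncountability of $\ord(X_{\tilde B}^j)$. To exhibit an actual double path I would further telescope so that the finite combinatorial data on $\binom{V}{j}$ (of which there are only finitely many possible configurations) stabilises along the chosen sequence of levels, and then apply Cantor-Bendixson to $X_{\tilde B}^j$ to select a condensation point $\tilde\gamma^*$; for each $n$, uncountably many paths in $X_{\tilde B}^j$ then agree with $\tilde\gamma^*$ on the first $n$ edges. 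The crucial use of finite rank is a pigeonhole in the finite set $\binom{V}{j}^2$ of pairs of $j$-subsets: this lets me extract from the sibling family a path $\tilde\delta \in X_{\tilde B}^j$ whose vertex sequence coincides with that of $\tilde\gamma^*$ at infinitely many common levels. Telescoping to those coincidence levels converts the differing edge segments between consecutive coincidences into genuine parallel edges at each new level, producing a double path of thickness $j$. The main obstacle is precisely this pigeonhole extraction: different paths need not visit the same vertex pair at coordinated levels, and finiteness of $\tilde V$ (after telescoping) is indispensable for synchronising them.
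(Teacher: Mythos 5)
Your parts (1) and (2) are essentially correct and close to the paper's own argument; your explicit bound on the fibres of $\ord$ restricted to $X_{\tilde B}^{j}$ (injecting a fibre over $z$ into the $j$-element subsets of the set of $X_B$-paths over $z$) is a nice, more quantitative version of what the paper does. One caveat in (1): you declare $M(z)=|\ord^{-1}(z)\cap X_B|$ finite ``by Lemma~\ref{lem-x} and the hypothesis'', but Lemma~\ref{lem-x} only says $M(z)$ is the \emph{supremum} of the thicknesses of extended paths over $z$, and a supremum of finite thicknesses could a priori be infinite. You need the additional observation (this is exactly the paper's first step, via the sets $B^z_n$): the union of all vertices traversed at level $n$ by extended paths over $z$ is again traversed by a single extended path over $z$, so unbounded thickness over a fixed $z$ would produce a path of infinite thickness, contradicting the hypothesis. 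With that inserted, your proof of (1) is complete.

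The genuine gap is in (3), at exactly the step you yourself flag as the main obstacle. You fix one condensation point $\tilde\gamma^*$ and claim that a pigeonhole in $\binom{V}{j}^2$ extracts a sibling $\tilde\delta\in X_{\tilde B}^{j}$ whose vertex sequence coincides with that of $\tilde\gamma^*$ at infinitely many levels --- and for your telescoping to yield parallel edges at \emph{every} new level you would moreover need $\tilde\delta$ to differ from $\tilde\gamma^*$ between infinitely many consecutive coincidences. No pigeonhole delivers this: the uncountably many paths agreeing with $\tilde\gamma^*$ on long heads may all branch off its vertex column and never return to it. For instance, one can arrange a rank-$4$ diagram in which, at each level, exactly one label fixes $\{1,2\}$ setwise, one label maps $\{3,4\}$ onto $\{1,2\}$, two labels fix $\{3,4\}$ setwise, no label maps any other two-element set onto $\{1,2\}$, and all remaining labels are collapsing. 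Then the path $\tilde\gamma^*$ running through $\{1,2\}$ at every level is a condensation point of $X_{\tilde B}^{2}$ (its siblings follow it for a while and then branch into the $\{3,4\}$ column, which carries uncountably many continuations); but since the only images of $\{1,2\}$ under the $\theta^{(n)}_i$ are $\{1,2\}$ or singletons and cardinalities are non-decreasing along a path, any path meeting $\{1,2\}$ at infinitely many levels stays in that column eventually and, the column admitting a single continuing label, has the same tail as $\tilde\gamma^*$. The double path lives entirely in the $\{3,4\}$ column and is invisible to any construction anchored at $\tilde\gamma^*$. The paper's proof avoids this by never insisting on returning to a fixed path: it works inside the uncountable set $X_{\tilde B}^{j}\cap\mathcal C$ of condensation points and iterates --- at each stage it picks finitely many condensation points agreeing on the current head and pairwise disagreeing at some later level, uses finite rank to pigeonhole two of them into a common vertex further down (one parallel edge after telescoping), and then \emph{rebases} at that common vertex, using one of the two newly found paths (again a condensation point) to run the next stage. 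This rebasing is the missing idea in your plan; as written, your extraction step can simply fail for the condensation point you chose.
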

\begin{proof}
For $z\in \Z_{(\ell_n)}$ and $n\in \N$, let $B^z_n\ssq V_n$ be of maximal cardinality among those elements of $\tilde V_n$ which are traversed by some 
path $\tilde\gamma\in X_{\tilde B}$ with $\w(\tilde\gamma)=z$.
Observe that $B^z_n$ is uniquely determined because if $B,B'\in V_n$ are traversed by such  a $\tilde\gamma$, then $B\cup B'$  is extendable and   there is such a $\tilde\gamma$ going through $B\cup B'$.
This shows that the supremum in the formula of Lemma~\ref{lem-x} is attained at some path $\tilde \gamma$.
Since all paths in $X_{\tilde B}$ have finite thickness, Lemma~\ref{lem-x} implies that the restriction of $\w$ to $X_B$ is finite-to-one, and this implies that the restriction of $\w$ to $X^j_{\tilde B}$ is finite-to-one.
Hence if $X_{\tilde B}^{j}$ is uncountable, then its image under $\w$ must be uncountable.
The converse follows from Lemma~\ref{lem-x}, which tells us that the singular points of $\Z_{(\ell_n)}$ are given by the image of $X_{\tilde B}\setminus X_{\tilde B}^1=\bigcup_{j\geq 2} X_{\tilde B}^j$.

Suppose that $X_{\tilde B}^{j}$ contains a double path. 
Then there is $n_0$ such that for all $n\geq n_0$
there is $A_n\subset V_n$ containing $j$ elements such that between $A_n$ and $A_{n-1}$ there are at least $2$ edges in the extended Bratteli diagram.
The set of paths in $X_{\tilde B}^{j}$
obtained by choosing one of the two edges  at each level  is uncountable. 

Suppose now that $X_{\tilde B}^{j}$ is uncountable. 
Then the set of condensation points $X_{\tilde B}^{j}\cap \mathcal C   $ of $X_{\tilde B}^{j}$ is uncountable. 
Since $X_{\tilde B}^{j}\cap \mathcal C   $  has no isolated points, for any given path $\gamma\in  X_{\tilde B}^{j}\cap \mathcal C     $ and $n\geq 0$ there are infinitely many distinct paths in  $X_{\tilde B}^{j}\cap \mathcal C   $ which agree with $\gamma$  on its head of length $n$. 
Let $K$ be the rank of the Toeplitz-Bratteli-Vershik system and pick some $m_1\geq 1$. There is $m > m_1$ and $K+1$ paths of $X_{\tilde B}^{j}\cap \mathcal C   $ which agree with  $\gamma$'s head of length $m_1$ but pairwise disagree on the head of length $m$. 
Since infinitely often $|V_n|=K$, the pigeon hole principle requires that two of the distinct paths must meet a common vertex of level $m_2>m$. Telescoping the levels $m_1$ through $m_2$,  the part of these two paths between level $m_1$ and level $m_2$ defines a parallel edge. 
Iterating this procedure ($m_2$ playing the role of $m_1$ etc.) proves the statement.
\end{proof}

The following corollary can be seen as a generalisation of Lemma~\ref{lem:finite-or-infinite} to all Toeplitz shifts with finite rank.

\begin{cor}\label{thick-double-path} 
Let $(X_B,\varphi_\w)$ be a Toeplitz Bratteli-Vershik system with finite Toeplitz rank. The maximal equicontinuous factor contains uncountably many singular points if and only if $(X_B,\varphi_\w)$ is thick.
\end{cor}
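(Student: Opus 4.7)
The plan is to deduce the corollary directly from Lemma~\ref{lem:thick-double-path}, using finite Toeplitz rank to upgrade the hypotheses. First I would observe that if $(X_B,\varphi_\w)$ has finite Toeplitz rank, then the cardinality $|A_n|$ along any extended path is bounded by the rank, so in particular every path in $X_{\tilde B}$ has finite thickness. This puts us in the setting of Lemma~\ref{lem:thick-double-path}(1), which gives the equivalence: the maximal equicontinuous factor has uncountably many singular points if and only if $X_{\tilde B}^{j}$ is uncountable for some $j>1$.

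For the forward implication, I would assume uncountably many singular points and let $k$ be the essential thickness, i.e.\ the largest $j$ for which $X_{\tilde B}^{j}$ is uncountable. By Lemma~\ref{lem:thick-double-path}(1) there is some $j>1$ with $X_{\tilde B}^{j}$ uncountable, so $k>1$; on the other hand $k$ is bounded above by the rank, so $k\in(1,\infty)$. Since $(X_B,\varphi_\w)$ has finite (Toeplitz) rank and $X_{\tilde B}^{k}$ is uncountable, Lemma~\ref{lem:thick-double-path}(3) supplies a double path in $X_{\tilde B}^{k}$. This verifies both requirements of Definition~\ref{def:thick}, so $(X_B,\varphi_\w)$ is thick.

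For the converse, if $(X_B,\varphi_\w)$ is thick then by definition its essential thickness is some $k\in(1,\infty)$, so $X_{\tilde B}^{k}$ is uncountable with $k>1$; another application of Lemma~\ref{lem:thick-double-path}(1) then produces uncountably many singular points in the maximal equicontinuous factor. Note that for this direction the existence of a double path is not needed---only the uncountability of $X_{\tilde B}^{k}$ is used---but of course it is built into the definition of thickness.

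I expect no serious obstacle here, since the work has essentially been done in Lemma~\ref{lem-x} and Lemma~\ref{lem:thick-double-path}; the only subtlety is making sure that finite Toeplitz rank really does force all extended paths to have finite thickness, and that the essential thickness is then a bona fide maximum (as opposed to a supremum) so that Lemma~\ref{lem:thick-double-path}(3) can be invoked at the level $k$.
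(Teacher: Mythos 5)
Your proof is correct and follows essentially the same route as the paper: finite rank forces all extended paths to have finite thickness, part (1) of Lemma~\ref{lem:thick-double-path} converts uncountably many singular points into essential thickness $k>1$ (necessarily finite, bounded by the rank), and part (3) supplies the required double path of thickness $k$; the converse is the definition of thickness combined with part (1). No gaps.
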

\begin{proof}
Finite rank implies that all paths in $X_{\tilde B}$ are of finite thickness. 
Now, part (1) of Lemma~\ref{lem:thick-double-path}
gives that we have uncountably many singular points if and only if the essential thickness is strictly greater than $1$. 
Part (3) of the same lemma gives that if the essential thickness is $k$, then there is a double path of thickness $k$.
\end{proof}

Given a choice function $\varphi \in \{0,1\}^{\N_0}$ and a double path $\db{\gamma}$, let $\varphi(\db{\gamma})$ be the (single edge) path $ ({\gamma}_{n,\varphi(n)})$. Such a single edge path defines a sequence of maps $\th{n}{\varphi(\db{\gamma})}$, see (\ref{eq-theta}).
The second part of Lemma~\ref{lem:thick-double-path} implies that the essential thickness is an upper bound for the maximal thickness a double path can have.   
From the next results, we obtain more delicate information:
the size of the image $ | \th{m}{\varphi(\db{\gamma})}\cdots  \th{n}{\varphi(\db{\gamma})} (V_{n+1})|$ of sufficiently long finite paths is also bounded above by the essential thickness.

In the proof of the next statement, we denote the
set of all finite and infinite paths in $\tilde B$ which start at the top vertex 
 $v_0$ by $Y_{\tilde B}$.
Observe that $Y_{\tilde B}$ can naturally be seen as a compact  space, where finite paths are seen as infinite paths which eventually pass through a placeholder vertex.

\begin{lem}\label{all-choice-bounded}
Let $(X_B,\varphi_\w)$ be a Toeplitz Bratteli-Vershik system with essential thickness $k$. 
Consider $z^0,z^1\in \Z_{(\ell_n)} $ with
$z^0_n\neq z^1_n$ for each $n$.
Then for all but at most countably many $\varphi\in \{0,1\}^{\N_0}$ we have
\begin{align*}
 \forall m\in \N \,\, \exists n_0>m \,\, \forall n\geq n_0 \:
| \th{m}{z^{\varphi(m)}_m}\cdots  \th{n}{z^{\varphi(n)}_n} (V_{n+1})| \leq k.
\end{align*}
 \end{lem}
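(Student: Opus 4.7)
My plan is to associate to each choice function $\varphi\in\{0,1\}^{\N_0}$ a canonical infinite path $\tilde\gamma^\varphi$ in the extended diagram whose thickness controls the quantity in the conclusion, and then to use the assumption on the essential thickness to bound the number of such paths with thickness larger than $k$.

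Write $j^\varphi_n:=z^{\varphi(n)}_n$ and, for $n\geq m$, set
$U^\varphi_{m,n}:=\theta^{(m)}_{j^\varphi_m}\cdots\theta^{(n)}_{j^\varphi_n}(V_{n+1})\subseteq V_m$.
Since $\theta^{(n+1)}_{j^\varphi_{n+1}}(V_{n+2})\subseteq V_{n+1}$, the sets $(U^\varphi_{m,n})_n$ are non-empty and non-increasing in $n$, and finiteness of $V_m$ forces stabilization to a limit $U^\varphi_m\in\tilde V_m$. Passing to the limit in the identity $U^\varphi_{m,n}=\theta^{(m)}_{j^\varphi_m}(U^\varphi_{m+1,n})$ (valid for $n\geq m+1$) yields $\theta^{(m)}_{j^\varphi_m}(U^\varphi_{m+1})=U^\varphi_m$, while $\ell_0=1$ forces $U^\varphi_0=\{v_0\}$. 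Hence $(U^\varphi_m)_m$, together with the edges carrying labels $(j^\varphi_n)$, defines an infinite path $\tilde\gamma^\varphi\in X_{\tilde B}$. The same stabilization converts the condition of the lemma into ``$|U^\varphi_m|\leq k$ for every $m$''; since $\theta^{(m)}_{j^\varphi_m}$ surjects $U^\varphi_{m+1}$ onto $U^\varphi_m$, the sequence $(|U^\varphi_m|)_m$ is non-decreasing, so this is in turn equivalent to $\tilde\gamma^\varphi$ having thickness at most $k$.

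Because $z^0_n\neq z^1_n$ for every $n$, distinct $\varphi$'s produce distinct label sequences and hence distinct paths, so $\varphi\mapsto\tilde\gamma^\varphi$ is injective. The set of exceptional $\varphi$'s therefore injects into $\bigcup_{j>k}X_{\tilde B}^j$, which by the very definition of essential thickness is a countable union of countable sets, giving the desired bound.

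The main subtlety is the stabilization argument, which both produces a \emph{genuine} infinite path in $\tilde B$ and aligns eventual cardinality with thickness; once this is in place, the rest is bookkeeping combined with the definition of essential thickness.
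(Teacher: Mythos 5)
Your proof is correct and follows essentially the same route as the paper's: you attach to each choice function $\varphi$ an infinite path in $X_{\tilde B}$ with edge labels $z^{\varphi(n)}_n$, observe that any $\varphi$ violating the conclusion yields a path of thickness $>k$, use $z^0_n\neq z^1_n$ for injectivity, and invoke the definition of essential thickness to make the exceptional set countable (just make explicit that your union $\bigcup_{j>k}X_{\tilde B}^j$ includes $j=\infty$, which is also countable when the essential thickness is finite). The only difference is cosmetic: the paper extracts the infinite path by a compactness argument in the space $Y_{\tilde B}$ of finite and infinite paths and argues by contradiction, whereas you build it directly via stabilization of the nested images $\theta^{(m)}_{z^{\varphi(m)}_m}\cdots\theta^{(n)}_{z^{\varphi(n)}_n}(V_{n+1})$.
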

\begin{proof}
We only have to consider the case of finite $k$ as the statement is trivially  true otherwise.
Suppose there are uncountably many $\varphi$ such that
$$ \exists m\in \N\,\, \forall n_0>m\,\, \exists n\geq n_0\:
|\th{m}{z^{\varphi(m)}_m}\cdots  \th{n}{z^{\varphi(n)}_n} (V_{n+1})|> k.$$
Then for each such $\varphi$ there are arbitrarily large $n$ and paths $\gamma^{\varphi;n}\in Y_{\tilde B}$ of length $n$ with $\w(\gamma^{\varphi;n}_\ell)=z^{\varphi(\ell)}_\ell$ for $\ell=0,\ldots,n-1$ which traverse a subset of $V_m$ of size bigger than $k$.
Due to the compactness of $Y_{\tilde B}$ there must hence be an infinite
path $\gamma^\varphi$ (i.e., an element of $X_{\tilde B}$) with $\w(\gamma^{\varphi}_\ell)=z^{\varphi(\ell)}_\ell$ ($\ell\in \N_0$)
which traverses a subset of $V_m$ of size bigger than $k$.
It follows that  $X^{>k}_{\tilde B} $ is uncountable, contradicting our assumption that $k$ is the essential thickness.
\end{proof}

Note that if a double path $\bar\gamma$ has thickness $k<\infty$, then
there exists $m_0$ such that also the opposite inequality is true.
More precisely, for uncountably many $\varphi\in \{0,1\}^{\N_0}$ we have
$$\exists m\, \forall n\geq m\:
| \th{m}{\varphi(\db{\gamma})}\cdots  \th{n}{\varphi(\db{\gamma})}(V_{n+1})| \geq k .$$
Indeed, the contrary, that is, the assumption that for all but at most countably many $\varphi$ we have
$$ \forall m\,\exists n\geq m\:
| \th{m}{\varphi(\db{\gamma})}\cdots  \th{n}{\varphi(\db{\gamma})}(V_{n+1})| < k, $$
implies that all but at most countably $\varphi(\db{\gamma})$ belong to $X_{\tilde B}^{<k}$, a contradiction.

We immediately obtain
\begin{cor}\label{cor-sand}
Let $(X_B,\varphi_\w )$ be a thick Toeplitz Bratteli-Vershik system with essential thickness $k$.
 Possibly after telescoping, there exists a 
choice function $\varphi$ such that  $| \th{m}{\varphi(\db{\gamma})}  (V_{m+1})|=k$ for all large enough $m$.
\end{cor}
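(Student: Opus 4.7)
The plan is to combine Lemma~\ref{all-choice-bounded} with the thickness observation made directly before the statement. Set $z^a := \omega(\gamma_{\cdot, a})$ for $a \in \{1, 2\}$; since the component edges of $\db\gamma$ are parallel, $z^1_n \neq z^2_n$ for every $n \geq 1$, so Lemma~\ref{all-choice-bounded} supplies a countable exceptional set $E \subset \{0,1\}^{\N_0}$ off which the upper bound
\[
|\th{m}{\varphi(\db\gamma)} \cdots \th{n}{\varphi(\db\gamma)}(V_{n+1})| \leq k
\]
holds for every $m$ and all $n$ beyond some $n_0(m, \varphi)$. Conversely, for any $\varphi$ the path $\varphi(\db\gamma)$ traverses the common source $A_m$ of the two parallel edges of $\db\gamma$ at level $m$, and $|A_m| = k$ for $m$ past some $m^*$; hence the image also contains $A_m$, giving the matching lower bound. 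Combining these, for every $\varphi \notin E$, $m \geq m^*$, and $n \geq n_0(m, \varphi)$, the composition image has cardinality exactly $k$.

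With such a $\varphi$ chosen, I would telescope $B$ to the levels $(N_i)$ defined inductively by $N_0 = m^*$ and $N_{i+1} = n_0(N_i, \varphi) + 1$. Inside the resulting extended diagram $\tilde B'$, the single map $\th{i}{\varphi'(\db{\gamma'})}$ unfolds into the composition $\th{N_i}{\varphi(\db\gamma)} \cdots \th{N_{i+1}-1}{\varphi(\db\gamma)}$; by construction the image of $V'_{i+1} = V_{N_{i+1}}$ under this composition has cardinality $k$ for every $i$, which is precisely the conclusion of the corollary.

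The main obstacle is that a choice function on $B'$ corresponds to one on $B$ that is constant on each interval $[N_i, N_{i+1})$, so $\varphi$ has to be constant there. I would handle this by a diagonal construction of $\varphi$ and $(N_i)$ in tandem: enumerating $E = \{\psi_j\}_{j \in \N}$, at stage $i$ one picks a constant value $a_i \in \{0,1\}$ on $[N_i, N_{i+1})$ and the length of this interval so that the associated hypothetical tail extension $\varphi|_{[0, N_i)} \cup (\equiv a_i)$ past $N_i$ lies outside $E$, and so that $\varphi$ disagrees with $\psi_i$ somewhere on $[N_i, N_{i+1})$. Because at each stage only finitely many $\psi_j$ need to be avoided while the possible extensions past $N_{i+1}$ remain uncountable, this diagonalisation succeeds and delivers both the telescoping and the required $\varphi$.
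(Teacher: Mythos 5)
Your first three steps are sound and essentially reproduce the paper's route: applying Lemma~\ref{all-choice-bounded} to $z^a=\ord(\gamma_{\cdot,a})$ gives the upper bound off a countable set $E$, and your lower bound is in fact cleaner than the paper's remark, since for \emph{every} choice function the composed image of $V_{n+1}$ contains $\th{m}{\varphi(\db{\gamma})}\cdots\th{n}{\varphi(\db{\gamma})}(A_{n+1})=A_m$, of cardinality $k$ for large $m$. The gap is in your final paragraph. At each stage there are only \emph{two} candidate constant-tail extensions $\varphi|_{[0,N_i)}\cup(\equiv a_i)$, and nothing guarantees that either of them avoids $E$: the exceptional set of Lemma~\ref{all-choice-bounded} is merely countable and may contain every eventually constant choice function. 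Indeed, nothing in the hypotheses (and nothing in your argument) excludes the existence of one path in $X_{\tilde B}$ of thickness $k+1$ whose labels eventually agree with those of the branch $\gamma_{\cdot,1}$ and another along $\gamma_{\cdot,2}$; these are only exceptional paths, hence compatible with essential thickness $k$, yet they force the composed image $\th{m}{z^{a}_m}\cdots\th{n}{z^{a}_n}(V_{n+1})$ to have cardinality $>k$ for all $n$, for every large $m$ and both $a$, so that every eventually constant $\varphi$ lies in $E$. Your justification (``only finitely many $\psi_j$ need to be avoided, the extensions remain uncountable'') handles the diagonalisation against $\psi_i$ but not the real requirement, namely that one of the two specific constant-tail sequences avoid \emph{all} of $E$. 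In such a situation no block-constant choice function works at all: both one-level composites of the componentwise-telescoped double path have images of size $>k$.

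The obstacle you set out to remove is, however, not there, and this is how the corollary has to be read (and how it is used). Fix any good $\varphi\notin E$, with no constancy requirement, and telescope to your levels $(N_i)$. The composite of the edges $\gamma_{n,\varphi(n)}$, $n\in[N_i,N_{i+1})$, is itself an edge of the telescoped extended diagram from $A_{N_i}$ to $A_{N_{i+1}}$, parallel to, say, the composite along $1-\varphi$; these pairs form a double path of thickness $k$ in the telescoped diagram, and the choice function selecting the first component at every level has exactly the asserted property, because by your own computation $|\th{N_i}{\varphi(\db{\gamma})}\cdots\th{N_{i+1}-1}{\varphi(\db{\gamma})}(V_{N_{i+1}})|=k$. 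This freedom to re-choose which parallel edges constitute the double path after telescoping is all that is needed downstream: the proof of Proposition~\ref{arithmetic-progression-prop} only uses that at each level there is some label $\kappa_n$ with $\th{n}{\kappa_n}(A_{n+1})=A_n$ and $|\th{n}{\kappa_n}(V_{n+1})|=k$. So delete the diagonal construction and replace it by this observation; the rest of your argument then stands.
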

\subsection{Toeplitz systems and non-tameness}\label{Toeplitz tameness}

In this section we prove one of our main results, Theorem \ref{Toeplitz non-tame}.  It applies to all finite rank Toeplitz systems, as stated in Theorem~\ref{Toeplitz finite rank non-tame}.

As before, we identify subsets $A_n\subset V_n$ with vertices $A_{n}\in \tilde V_n$.
For the convenience of the reader, we provide a proof of the next statement which
is reminiscent of \cite[Theorem~13.1]{Downarowicz2005}.
\begin{lem}\label{lem-density}
Consider a Toeplitz Bratteli-Vershik system with 
characteristic sequence $(\ell_n)$ and let
$(A_n)$ be some sequence of extendable vertices
$A_{n}\subset V_{n}$.
Suppose that the set of singular points of $\Z_{(\ell_n)}$ has Haar measure $0$. 

By telescoping, we can ensure that
\[ \lim_{n\to\infty} \frac{\left| \{i\in [0,\ell_n-1]: |\theta_i^{(n)}(A_{n+1})|\geq 2 \}\right|}{ \ell_n} = 0.\]
\end{lem} 
\newcommand{\head}{\mathrm{head}}
\begin{proof}
Observe that the Haar probability measure $\mu$
of the set $D\subset \Z_{\ell_n}$ of singular points is
$$\mu(D) = \lim_{n\to +\infty} \frac1{\prod_{k=0}^{n}\ell_k} |\{z_n\cdots z_0:z\in D\}|=0.$$

Let $n\geq m\in \N_0$ and $w= w_n\cdots w_m$, with $0\leq w_i \leq \ell_i-1$.
If there is an extendable $A_{n+1}\ssq V_{n+1}$ such that $|\th{m}{w_m}\cdots \th{n}{w_n}(A_{n+1})|\geq 2$,
then $w$ is a subword of some singular point $z\in D$, see Lemma~\ref{lem-x}.
 Hence, given any sequence of extendable vertices $A_{n}\in  \tilde V_{n}$, and any $m\geq 0$, we have
\begin{align*}
&\limsup_{n\to+\infty} \frac1{\prod_{k=m}^{n}\ell_k} |\{w_n\cdots w_m\:
|\th{m}{w_m}\cdots \th{n}{w_n}(A_{n+1})|\geq 2\}|\\
&\leq \limsup_{n\to+\infty} \frac1{\prod_{k=m}^{n}\ell_k} |\{z_n\cdots z_m\:
z\in D\}|\leq
\prod_{k=0}^{m-1}\ell_k\cdot \limsup_{n\to+\infty} \frac1{\prod_{k=0}^{n}\ell_k} |\{z_n\cdots z_0\:
z\in D\}|
= 0.
\end{align*}
Telescoping the from level $m$ to $n$ the statement follows.
 \end{proof}

Recall that for systems with finite topological rank,  Lemma~\ref{lem:thick-double-path} tells us that essential thickness $k>1$ is equivalent to 
the existence of a double path of thickness $k$. 
Therefore the following technical proposition applies to all finite rank Toeplitz systems with uncountably many singular points.

\begin{prop}\label{arithmetic-progression-prop}
\label{cor-loop} Let $(X_B,\varphi_\om)$ be  a thick Toeplitz Bratteli-Vershik system. 
Suppose that the set of singular points has Haar measure $0$.  Possibly after telescoping there exist
\begin{enumerate} 
\item   for any $n\geq 1$ an arithmetic progression\footnote{i.e.\ three numbers $j_0,j_1,j_2$ such that $j_{2}-j_1=j_1-j_0$} $j_0\ho{n}, j_1\ho{n}, j_2\ho{n}\in \{0,\cdots,\ell_n-1\}$, 
 and sets  $A_n\subset V_n$, 
such that $\theta\ho{n}_{j_1\ho{n}}$ and $\theta\ho{n}_{j_2\ho{n}}$ restrict to the same bijection from $A_{n+1}$ to $A_{n}$ 
while $B_{n} :={ \theta\ho{n}_{j_0\ho{n}}(V_{n+1})}$ is a proper subset of $ A_{n}$,
\item for any $n>1$ an $i_n\in \{0,\cdots,\ell_n-1\}$ such that  {$\th{n}{i_n}(V_{n+1})$} is contained in $\{a\in A_{n} : {\th{n-1}{j_1\ho{n}}}(a) \notin B_{n-1}\}$.
\end{enumerate}
\end{prop}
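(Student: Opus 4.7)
The plan is to combine three inputs provided by earlier results in the paper: the double path of thickness $k$ granted by thickness; the full-image property on parallel-edge compositions from Corollary \ref{cor-sand} and Lemma \ref{all-choice-bounded}; and the density estimate on ``collapsing'' labels from Lemma \ref{lem-density}, available thanks to the Haar measure zero hypothesis. A multi-stage telescoping, combined with a pigeon-hole argument and a mixed-radix computation, then exhibits the arithmetic progression $j_0, j_1, j_2$ and the auxiliary $i_n$ at every new level.

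I would first telescope so that thickness supplies a double path passing through vertices $A_n\subset V_n$ of cardinality $k$, with two parallel-edge labels $\omega_n^1 < \omega_n^2$ at each level satisfying $\theta^{(n)}_{\omega_n^j}(V_{n+1})=A_n$ (using Corollary \ref{cor-sand}), and so that the collapsing set $H_n:=\{i: |\theta^{(n)}_i(A_{n+1})|=1\}$ has density approaching one in $\{0,\dots,\ell_n-1\}$ (using Lemma \ref{lem-density}). A further block-telescoping of length $M$ with $2^M>k!$ combines $M$ consecutive levels into one new level. Inside such a block the $2^M$ parallel-edge label sequences $(c_n)\in\{\omega_n^1,\omega_n^2\}^M$ each yield, by composition, a bijection $A_{m+M+1}\to A_m$; pigeon hole produces two such sequences $(c_n^{(1)}),(c_n^{(2)})$ inducing the same bijection. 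Writing their mixed-radix labels as $j_1<j_2$, this gives $\theta^{(n)}_{j_1}|_{A_{n+1}}=\theta^{(n)}_{j_2}|_{A_{n+1}}$. Setting $j_0:=2j_1-j_2$ forms the required AP; a digit-by-digit reading of $j_0$ shows that at positions where $c_n^{(1)}=c_n^{(2)}$ the digit is a parallel-edge label, while at positions of disagreement it equals $2\omega_n^a-\omega_n^b$ for some $a\neq b$. When this anomalous digit falls in $H_n$, the collapsing property propagates through the composition and forces $\theta^{(n)}_{j_0}(V_{n+1})$ to be a singleton inside $A_n$, giving $B_n\subsetneq A_n$. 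Part (2) is obtained by an analogous pigeon-hole and density argument: once $j_0,j_1,j_2$ and $B_{n-1}$ have been fixed, one picks $i_n\in H_n$ whose singleton image lies in the prescribed subset $\{a\in A_n: \theta^{(n-1)}_{j_1^{(n)}}(a)\notin B_{n-1}\}$, which still has density one within $H_n$ after further telescoping.

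The main obstacle is the simultaneous matching of three constraints in the construction of $j_0, j_1, j_2$: the arithmetic-progression structure, the same-bijection property for $j_1$ and $j_2$, and the proper-image property for $j_0$. Pigeon hole does not on its own pick pairs $(c_n^{(1)}),(c_n^{(2)})$ whose differing positions correspond to favourable digits in $H_n$. The resolution is to push the block-telescoping far enough that the $2^M/k!$ pigeon-hole pairs vastly outnumber the few pairs whose differing positions fall outside $H_n$; the near-one density of $H_n$ granted by Lemma \ref{lem-density} makes this alignment possible. One must also rule out mixed-radix carries in the computation $j_0=2j_1-j_2$ so that the digit-by-digit analysis is valid -- this is arranged by telescoping so that $\omega_n^1,\omega_n^2$ sit comfortably inside $\{0,\dots,\ell_n-1\}$ -- and use the Toeplitz-almost-automorphic structure (via coincidence-type columns, which exist because singular points have Haar measure zero) to guarantee that the singleton image at the collapsing digit lands inside $A_n$ rather than elsewhere in $V_n$.
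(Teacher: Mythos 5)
Your pigeonhole argument for the ``same bijection'' property (composing the $2^M>k!$ parallel-edge choices in a telescoped block and finding two with equal restriction to $A$) is a clean alternative to what the paper does there (the paper tracks the permutations $I_n$ comparing the two restrictions and telescopes in two different ways to make them the identity). But the heart of part (1) --- producing $B_n=\theta^{(n)}_{j_0}(V_{n+1})\subsetneq A_n$ --- has a genuine gap. By setting $j_0=2j_1-j_2$ you pin the digits of $j_0$ at the disagreement positions to one of exactly \emph{two} numbers determined by the level alone, namely $\omega^1_n-\Delta_n$ or $\omega^2_n+\Delta_n$ with $\Delta_n=\omega^2_n-\omega^1_n$; which positions disagree depends on the pigeonhole pair, but the possible digit values do not. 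Whether these two specific labels lie in $H_n$ is therefore a property of the level, and Lemma~\ref{lem-density} (density of $H_n$ close to one) gives no information about two prescribed labels --- the complement of $H_n$ certainly contains $\omega^1_n,\omega^2_n$ and may perfectly well contain their $\Delta_n$-neighbours at every level. Consequently the proposed resolution (``the pigeonhole pairs vastly outnumber the unfavourable ones'') is a category error: if at every level both adjacent labels are non-collapsing, \emph{no} pair works, however many there are. The paper avoids pinning $j_0$ to these two values: it keeps $\Delta_n$ constant under telescoping, so the coset $\omega^1_n+\Delta_n\Z$ retains relative density $1/\Delta_n$ while the non-collapsing labels become vanishingly sparse, hence the coset eventually contains a collapsing label; the arithmetic progression is then arranged around it using the sandwich maps $\psi^{(n)}_j=\theta^{(n-1)}_{\kappa_{n-1}}\theta^{(n)}_{j}\theta^{(n+1)}_{\kappa_{n+1}}$ built from Corollary~\ref{cor-sand}, followed by telescoping of the three levels.

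Even granting a collapsing anomalous digit, your construction does not yield the statement, because $B_n$ is the image of the \emph{whole} vertex set $V_{n+1}$ and must land \emph{inside} $A_n$. An $H_n$-label only collapses $A_{n+1}$, and its singleton image may lie outside $A_n$, after which the remaining parallel-edge factors give no control; the image of $V_{n+1}\setminus A_{n+1}$ is not controlled anywhere in your sketch, and the appeal to ``coincidence-type columns'' has no mechanism behind it. This is precisely what the paper's sandwich fixes in one stroke: the right factor $\theta^{(n+1)}_{\kappa_{n+1}}$ maps all of $V_{n+2}$ onto $A_{n+1}$ (so the image of $V$ equals the image of $A$) and the left factor $\theta^{(n-1)}_{\kappa_{n-1}}$ maps everything into $A_{n-1}$, so non-injectivity of the middle factor on $A_{n+1}$ immediately gives a proper subset of $A_{n-1}$. (Note also that Corollary~\ref{cor-sand} provides a single label $\kappa_n$ per level with $\theta^{(n)}_{\kappa_n}(V_{n+1})=A_n$, not both parallel labels as you assert.) The same $V$-versus-$A$ confusion affects your part (2): the proposition requires $\theta^{(n)}_{i_n}(V_{n+1})$, not $\theta^{(n)}_{i_n}(A_{n+1})$, to be contained in the prescribed set, and density of $H_n$ says nothing about \emph{where} the singleton values land; the paper instead uses properness of the order (the labels $0$ and $\ell_n-1$ collapse all of $V_{n+1}$) together with minimality and further telescoping to hit a prescribed $a\in A_n$ with $\theta^{(n-1)}_{j_1^{(n-1)}}(a)\notin B_{n-1}$. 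Your no-carry issue for $j_0=2j_1-j_2$ can indeed be arranged by preliminary telescoping, but that is minor compared with the two points above.
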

\begin{proof} Let $k$ be the essential thickness of the Toeplitz Bratteli-Vershik system and $\db{\gamma}$ be a double path of thickness $k>1$. It is  a sequence of parallel edges 
 $\db{\gamma}_{n} = (\gamma_{n,1},\gamma_{n,2})$, that is,
$$s(\gamma_{n,1}) = s(\gamma_{n,2}) = A_{n}\subset V_{n},\quad
r(\gamma_{n,1}) = r(\gamma_{n,2}) = A_{n+1}\subset V_{n+1}$$
such that, for large enough $n$, $|A_n|=k$, and
furthermore,  the maps
$\th{n}{\ord(\gamma_{n,1})}$and $\th{n}{\ord(\gamma_{n,2})}$
each map $A_{n+1} $ bijectively to  $A_{n}$.
Note that, by definition, all  $A_{n+1}$ are extendable.

Take the $n$-th parallel edge $(\gamma_{n,1},\gamma_{n,2})$ of our double path and set $$\Delta_{n}(\gamma_{n,1},\gamma_{n,2}) := \ord(\gamma_{n,2})-\ord(\gamma_{n,1}).$$
 Telescoping with the next level $n+1$ will produce four parallel edges.
It is crucial to observe that at least two of these four edges have the same value of $\Delta_{n}$ as the two above. Hence we can apply Lemma~\ref{lem-density} to conclude that, possibly after telescoping there is 
$j$  in $(\ord(\gamma_{n,1})+\Delta_{n}\Z) \cap \{0,\cdots,\ell_n-1\}$ 
such that  $|\th{n}{j}(A_{n+1})|<k$. 
We next need to find such a $j$ where moreover $\th{n}{j}(A_{n+1})\subset A_n$.

By Corollary \ref{cor-sand} we can find, for each $n$ large enough, $\kappa_n\in\{0,\cdots,\ell_n-1\}$ such that $\th{n}{\kappa_n}(A_{n+1}) = A_{n}$ and  {$| \th{n}{\kappa_n} (V_{n+1})|= k$, hence $\th{n}{\kappa_n} (V_{n+1})= A_{n}$.}
Define $\psi\ho{n}_j:\tilde V_{n+2}\to \tilde V_{n-1}$ through
$$\psi\ho{n}_j := \th{n-1}{\kappa_{n-1}} \th{n}{j} \th{n+1}{\kappa_{n+1}}$$
and note that $\psi\ho{n}_j(A_{n+2}) \subset A_{n-1}$. 
Moreover, the inclusion is proper if 
{$|\th{n}{j}(A_{n+1})|<k$} while $\psi\ho{n}_j$ is a bijection if $j=\ord(\gamma_{n,1})$ or $j=\ord(\gamma_{n,2})$. Thus we can find $j_0\ho{n},j_1\ho{n},j_2\ho{n} \in (\ord(\gamma_{n,1})+\Delta_{n}\Z) \cap \{0,\cdots,\ell_n-1\}$ such that $j_2\ho{n}-j_1\ho{n} = j_1\ho{n}-j_0\ho{n}$ and 
$ B_{n-1} = \psi\ho{n}_{j_0\ho{n}}(A_{n+2}) $ is a proper subset of $ A_{n-1}$ while
$\psi\ho{n}_{j_1\ho{n}}(A_{n+2}) = A_{n-1}$
  and $\psi\ho{n}_{j_2\ho{n}}(A_{n+2}) = A_{n-1}$. We now telescope the three floors together and thus the $\psi\ho{n}_{j_0\ho{n}}$,  $\psi\ho{n}_{j_1\ho{n}}$, $\psi\ho{n}_{j_2\ho{n}}$ can be realised as $\theta\ho{n}_{j_0\ho{n}}$, $\theta\ho{n}_{j_1\ho{n}}$ and $\theta\ho{n}_{j_2\ho{n}}$. 

This  shows the first statement except for the fact that 
we only know that $\theta\ho{n}_{j_1\ho{n}}$ and $\theta\ho{n}_{j_2\ho{n}}$ restrict to bijections $f\ho{n}_1$ and $f\ho{n}_2$ from $A_{n+1}$ to $A_{n}$, and it remains to show that, possibly after telescoping, $f\ho{n}_1=f\ho{n}_2$.
It is convenient to identify the $A_{n+1}$ with $A_{1}$. We do this using the bijection 
$ \tau\ho{n}:= f\ho{1}_1\cdots f\ho{n}_1 $. With
\begin{equation}\label{eq:no-tel}I_n := \tau\ho{n-1} f\ho{n}_2{\tau\ho{n}}^{-1}\end{equation}
our aim is thus to show that, possibly after telescoping, all $I_n$ are the identity.

Let further $f\ho{n}_0:A_{n+1}\to A_{n}$ be the restriction of 
$\theta\ho{n}_{j_0\ho{n}}$ to $A_{n+1}$; it is non-surjective with image $B_{n}$. 
If we telescope level $n$ with level $n+1$ we get nine compositions $f\ho{n}_i f\ho{n+1}_j$ for the three possible values of $i$ and $j$. 
Let us take a closer look at two sets of choices for them.

Consider first the maps $f\ho{n}_0 f\ho{n+1}_1$, $f\ho{n}_{1} f\ho{n+1}_1$, $f\ho{n}_2 f\ho{n+1}_1$. 
These correspond, after telescoping of the two levels, to the restriction to $A\ho{n+1}$ of maps $\theta\ho{n}_{j_0\ho{n}}$, $\theta\ho{n}_{j_1\ho{n}}$, and $\theta\ho{n}_{j_2\ho{n}}$ with $j_2\ho{n}-j_1\ho{n} = j_1\ho{n}-j_0\ho{n}$ and   $f\ho{n}_1 f\ho{n+1}_1$, 
$f\ho{n}_2 f\ho{n+1}_1$ are bijections while $f\ho{n}_0 f\ho{n+1}_1$ is not.
It is quickly seen that under this choice, the map $I_n$ after telescoping coincides with the map before telescoping. 
In other words, if we replace $f\ho{n}_2$ with  $f\ho{n}_2 f\ho{n+1}_1$ in \eqref{eq:no-tel}, then $$ \tau\ho{n-1} f\ho{n}_2   f\ho{n+1}_1{\tau\ho{n+1}}^{-1}= I_n.$$ 
 
Now consider the maps $f\ho{n}_0 f\ho{n+1}_0$, $f\ho{n}_1 f\ho{n+1}_1$, $f\ho{n}_2 f\ho{n+1}_2$. Again these correspond, after telescoping of the two levels, to the restriction to $A_{n+1}$ of maps $\theta\ho{n}_{j_0\ho{n}}$, $\theta\ho{n}_{j_1\ho{n}}$, and $\theta\ho{n}_{j_2\ho{n}}$ with $j_2\ho{n}-j_1\ho{n} =j_1\ho{n}-j_0\ho{n}$ and $f\ho{n}_1 f\ho{n+1}_1$, 
$f\ho{n}_2 f\ho{n+1}_2$ are bijections while $f\ho{n}_0 f\ho{n+1}_0$ is not.
The telescoping, however, affects the map $I_n$. The new map $\tilde I_n$ becomes 
$$ \tilde I_n = \tau\ho{n-1} f\ho{n}_2 f\ho{n+1}_2 {\tau\ho{n+1}}^{-1}=I_nI_{n+1}.$$
As $A_{1}$ is finite, the sequence $I_n$ admits a constant subsequence $g\ho{n_k} = g$. 
Telescoping the levels from $n_k$ to $n_{k+1}-1$ in the first way described above, we arrive at a situation where all $I_n$ coincide with $g$. Let $N$ be the order of $g$. 
Now, telescoping $N$ consecutive levels together in the second way above we arrive at a situation where all $I_n$ are equal to the identity. While all this telescoping has an effect on $\theta\ho{n}_{j_0\ho{n}}$,  it does not change its crucial property, namely that it maps $A_{n+1}$ to a proper subset of $A_{n}$, and that $j_0\ho{n},j_1\ho{n},j_2\ho{n}$ form an arithmetic progression.

It remains to show the second property.
Since the order $\om$ is proper,  we can assume, by telescoping if necessary, that for each $n$, $|\theta^{(n)}_0(V_{n+1})|=|\theta^{(n)}_{\ell_n -1}(V_{n+1})|= 1$. Take $a\in A_{n}$
such that ${\th{n-1}{j_1\ho{n}}}(a) \notin B_{n-1}$.
By minimality (and perhaps further telescoping), we find $i_n$ such that $\th{n}{i_n}(V_{n+1})=\{a\}$.   
\end{proof}

\begin{thm}\label{Toeplitz non-tame}
\label{thm-loop} Every  thick Toeplitz shift is non-tame.
\end{thm}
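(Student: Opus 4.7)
The plan is to verify non-tameness via the characterisation provided by Proposition \ref{prop: independence implies non-tame}, that is, to exhibit an infinite independence sequence for a pair of disjoint cylinder sets of the Toeplitz shift. First, the easy case: if the singular points of the maximal equicontinuous factor $\Z_{(\ell_n)}$ have positive Haar measure, then $(X,T)$ is not regularly almost automorphic and is already non-tame by the Fuhrmann--Glasner--J\"ager--Oertel theorem recalled in the introduction. Hence I may assume the singular points form a Haar null set, which is precisely the hypothesis of Proposition \ref{arithmetic-progression-prop}. That proposition supplies, after telescoping the Bratteli--Vershik representation, at each level $n$ an arithmetic progression $j_0^{(n)}, j_1^{(n)}, j_2^{(n)}$ with common difference $\delta_n$, sets $A_n \subset V_n$, proper subsets $B_n = \theta^{(n)}_{j_0^{(n)}}(V_{n+1}) \subsetneq A_n$, and auxiliary labels $i_n$ with the stated properties. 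The key structural feature is a bijection/collapse dichotomy: $\theta^{(n)}_{j_1^{(n)}}$ and $\theta^{(n)}_{j_2^{(n)}}$ restrict to the same bijection $A_{n+1} \to A_n$, whereas $\theta^{(n)}_{j_0^{(n)}}$ sends all of $V_{n+1}$ into the strictly smaller $B_n$.

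Next I would use this dichotomy to build the independence sequence. Fix a reference path in $X_B$ whose $n$-th level label is $j_1^{(n)}$; for each choice function $\varphi \in \{0,1\}^{\N}$, define a modified path $\gamma^\varphi$ by, for each $n$, replacing the $n$-th label with either $j_0^{(n)}$ (encoding $\varphi(n) = 0$) or $j_2^{(n)}$ (encoding $\varphi(n) = 1$), while using the auxiliary labels $i_n$ at intermediate levels to isolate a single vertex of $A_n$. Because the three labels $j_0^{(n)}, j_1^{(n)}, j_2^{(n)}$ form an arithmetic progression with common difference $\delta_n$, Lemma \ref{lem-b} shows that such a substitution translates into a shift of the level-$n$ tower boundary by exactly $\pm\delta_n\,\ell^{(n-1)}$ positions. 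Choosing $t_n$ to be the symbolic position at this offset from the origin, the resulting symbol $x^\varphi_{t_n}$ becomes the image under $\theta^{(1)}\cdots\theta^{(n-1)}$ of a vertex lying in $A_n\setminus B_n$ (when $j_2^{(n)}$ is selected) or in $B_n$ (when $j_0^{(n)}$ is selected); part (2) of Proposition \ref{arithmetic-progression-prop} ensures that this level-$n$ distinction is faithfully transported down to a pair of distinct letters $a, b \in V_1$ at level one.

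The principal obstacle is combinatorial bookkeeping: a single sequence $(t_n)$ must work for \emph{all} $\varphi \in \{0,1\}^{\N}$ simultaneously, and decisions at different levels must not interfere. This will be resolved by the hierarchical structure of the Bratteli diagram: the $n$-th decision perturbs the symbolic sequence only inside a window of length $\ell^{(n)}$ around $t_n$, so by inductively placing the $t_n$ deep inside the tower at level $n+1$ and far away from $t_1, \ldots, t_{n-1}$, the perturbation windows become pairwise disjoint and the decisions genuinely independent. Once this independence is verified, Proposition \ref{prop: independence implies non-tame} together with Remark \ref{rem: tameness of shifts with finite alphabet} yields that $(X,T)$ is non-tame.
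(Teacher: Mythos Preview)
Your proposal follows essentially the same route as the paper: reduce to the Haar-null case via the Fuhrmann--Glasner--J\"ager--Oertel result, apply Proposition~\ref{arithmetic-progression-prop}, and build an independence sequence using the arithmetic progression and the auxiliary labels $i_n$. Two points, however, are genuinely off and would prevent the argument from going through as written.

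First, the non-interference mechanism is \emph{not} ``disjoint perturbation windows''. Shifting by $t_n$ alters the odometer coordinate at \emph{every} choice level below $n$, not just level $n$; carries do not stay confined. What actually makes earlier decisions invisible is the clause in Proposition~\ref{arithmetic-progression-prop}(1) that $\theta^{(m)}_{j_1^{(m)}}$ and $\theta^{(m)}_{j_2^{(m)}}$ restrict to the \emph{same} bijection $A_{m+1}\to A_m$. The paper encodes $\varphi_k\in\{0,1\}$ via the label $j^{(2k+1)}_{\varphi_k}$; after adding $t_n$, this label becomes $j^{(2k+1)}_{\varphi_k+1}\in\{j_1^{(2k+1)},j_2^{(2k+1)}\}$, and the equality $f_1^{(m)}=f_2^{(m)}$ collapses both possibilities to the single bijection $f_1^{(2k+1)}$. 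That is the real reason the arithmetic progression (and not merely two parallel edges) is essential.

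Second, the distinction at level $n$ is transported to $A_1$ via the bijection $\tau^{(2n)}=f_1^{(1)}\cdots f_1^{(2n)}$, landing in $\tau^{(2n)}(B_{2n+1})$ versus $A_1\setminus\tau^{(2n)}(B_{2n+1})$; these subsets depend on $n$ and are not a priori a fixed pair of letters. The paper closes this with a pigeonhole step (finitely many subsets of $A_1$) to pass to a subsequence where $\tau^{(2n)}(B_{2n+1})=B$ is constant, yielding the cylinder pair $[B]$ and its complement in $[A_1]$. Your sketch omits this step.
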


\begin{proof}
Let  $(X_B,\varphi_\om)$ be a thick Bratteli-Vershik representation of the given Toeplitz shift which we  assume to be conjugate to $(X_1,\sigma)$, see Section 2.33. 
We will construct an infinite independence set for two cylinder sets in $X_1$ which we define in the proof.

Observe that if the set of singular points in $\mc Z$ has positive Haar measure, then $(X_B, \varphi_\om)$ is non-tame \cite[Theorem 1.2]{FuhrmannGlasnerJagerOertel2018}.
We hence assume the set of singular points in $\mc Z$ to have zero Haar measure so that we can apply Proposition~\ref{arithmetic-progression-prop} in the following. 

Let $\varphi\in\{0,1\}^{\N_0}$ be a choice function.
We use the notation of the proof of  Proposition \ref{arithmetic-progression-prop} and let $h_{n}$ be the restriction of $\th{n}{i_n}$ to $A_{n+1}$. Recall that 
${f_1^{(n-1)}} h_{n}(A_{n+1})\subset A_{n-1}\backslash B_{n-1}$. 
Set
$$ z = \cdots i_{2n+2} j\ho{2n+1}_{\varphi_n}\cdots \cdots i_{2} j\ho{1}_{\varphi_0}, $$
$t_0=0$, $t_1 = (j^{(2)}_1- i_{2} ) \ell_1 + \Delta_1$, 
and for $n\geq 2$
$$t_{n} = t_{n-1} + (j^{(2n)}_1- i_{2n})\prod_{j=1}^{2n-1}\ell_{j}+ 
\Delta_{2n-1}
\prod_{j=1}^{2n-2}\ell_{j}.$$
Choose $x\in \ord^{-1}(z)$. 
By \eqref{eq-x} we have 
$x_0 \in \theta^{(1)}_{j\ho{1}_{\varphi_0}}\theta^{(2)}_{i_{2}}(A_{3})=f\ho{1}_{\varphi_0} h_{2}(A_{3})$. 
If $\varphi_0 = 0$, then 
$$f\ho{1}_{0} h_{2}(A_{3})\subset
\im f\ho{1}_{0} = B_{1}$$ whereas if $\varphi_0 = 1$ then 
$$f\ho{1}_{1} h_{2}(A_{3}) \subset A_{1}\backslash B_{1}.$$
Furthermore, $$ z + t_1 = \cdots  i_{4} j^{(3)}_{\varphi_1}j_1^{(2)} j^{(1)}_{\varphi_0+1}.$$
Hence, taking into account that $f\ho{n}_1=f\ho{n}_2$ we have
$$x_{t_1} \in \theta^{(1)}_{j^{(1)}_{\varphi_0+1}}\theta^{(2)}_{j_1^{(2)}} 
\theta^{(3)}_{j^{(3)}_{\varphi_1}}\theta^{(4)}_{i_{4}}(A_{5})
=f\ho{1}_{1} f\ho{2}_1 f\ho{3}_{\varphi_1} h_{4}(A_{5}).$$
Since
$$
f\ho{3}_{\varphi_1} h_{4}(A_{5}) = \left\{\begin{array}{ll}
f\ho{3}_{0} h_{4}(A_{5}) \subset B_{3} & \mbox{if  } \varphi_1=0,\\
f\ho{3}_{1} h_{4}(A_{5}) \subset A_{3}\backslash B_{3} & \mbox{if  } \varphi_1=1,
\end{array}\right.
$$
it follows that
$$
\begin{array}{ll} x_{t_1} \in \tau\ho{2}(B_{3}) & \mbox{ if }  \varphi_1=0,\\
x_{t_1} \in A_{1}\backslash \tau\ho{2}(B_{3}) & \mbox{ if } \varphi_1=1.
\end{array}
$$
Similarly, we find for all $n\geq 2$ that
$$
\begin{array}{ll} x_{t_n} \in \tau\ho{2n}(B_{2n+1}) & \mbox{ if } \varphi_n=0,\\
x_{t_n} \in A_{1}\backslash \tau\ho{2n}(B_{2n+1}) & \mbox{ if } \varphi_n=1.
\end{array}
$$
By finiteness of $A_{1}$ there is a subsequence of $B_{2n+1}$ such that $\tau^{2n}(B^{2n+1})$ is constant, say equal to $B$. 
Restricting to choice functions with support in this subsequence we obtain an independence sequence for the cylinder set $[B]$ and its complement in $[A_{1}]$.
\end{proof}

\begin{thm}\label{Toeplitz finite rank non-tame}
\label{thm-loop-finite} 
Let $(X,\sigma)$ be a Toeplitz shift of finite Toeplitz rank. Then  $(X,\sigma)$ is non-tame if and only if its maximal equicontinuous factor has uncountably many singular points.
\end{thm}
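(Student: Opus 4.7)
The plan is to establish the two implications by decoupling them, using the thickness machinery of Section~\ref{extended-BV} for one direction and the tameness criteria for almost automorphic shifts of Section~\ref{sec: criteria for tameness of almost automorphic shifts} for the other.

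For the implication ``uncountably many singular points $\Rightarrow$ non-tame'' I would argue as follows. Fix a Toeplitz Bratteli--Vershik representation $(X_B,\varphi_\omega)$ of $(X,\sigma)$ of finite rank, which exists by assumption of finite Toeplitz rank. Since all paths in $X_{\tilde B}$ then have finite thickness, Corollary~\ref{thick-double-path} applies and tells us that the hypothesis of uncountably many singular points in the maximal equicontinuous factor is equivalent to $(X_B,\varphi_\omega)$ being thick. By Theorem~\ref{Toeplitz non-tame} thickness of the representation forces $(X,\sigma)$ itself to be non-tame. Thus this direction is essentially a one-step reduction to the already proven Theorem~\ref{Toeplitz non-tame}.

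For the reverse implication ``only countably many singular points $\Rightarrow$ tame'' I would argue by contrapositive. Working again with a finite-rank Toeplitz Bratteli--Vershik representation, Lemma~\ref{lem:thick-double-path}(1) gives that having at most countably many singular points in $\Z_{(\ell_n)}$ forces the essential thickness to equal~$1$, so that the set $X_{\tilde B}\setminus X_{\tilde B}^{1}$ is countable. By Lemma~\ref{lem-x} this is exactly saying that the union of all singular fibres in $X$ is a countable $\sigma$-invariant set. With this information on the discontinuity points in hand, the plan is to feed the representation into the criterion for tameness of almost automorphic shifts obtained in Section~\ref{sec: criteria for tameness of almost automorphic shifts}, thereby concluding that $(X,\sigma)$ is tame.

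I expect the main obstacle to be this second direction: the thickness analysis only controls the size of the singular set, but ruling out an infinite independence sequence in the sense of Proposition~\ref{prop: independence implies non-tame} requires the finer description of discontinuity points and their orbits that is developed for semicocycle extensions in Section~\ref{sec: criteria for tameness of almost automorphic shifts}. Once that criterion is available, the chain \emph{finite Toeplitz rank + essential thickness $1$ $\Rightarrow$ countable singular set $\Rightarrow$ tame} closes the argument.
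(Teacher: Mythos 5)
Your first direction is exactly the paper's argument: finite Toeplitz rank forces all paths of $X_{\tilde B}$ to have finite thickness, Corollary~\ref{thick-double-path} converts ``uncountably many singular points'' into thickness of the representation, and Theorem~\ref{Toeplitz non-tame} then gives non-tameness. Nothing to add there.

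The converse is where your proposal has a genuine gap, and it sits precisely at the step you defer (``feed the representation into the criterion of Section~\ref{sec: criteria for tameness of almost automorphic shifts}''). The natural criterion to feed it into, Corollary~\ref{cor: sufficient criterion for tameness} (the one the paper itself invokes), carries the extra hypothesis that every forward orbit meets the discontinuity set $D$ only finitely often, and this does \emph{not} follow from countability of the singular set: for the tame substitution of Example~\ref{ex:examplezerob}, $D$ consists of the points of $\Z_4$ all of whose digits are $1$ except for at most one digit equal to $2$, so the forward orbit of $\bar 1$ meets $D$ at all times $4^k$, $k\geq 0$. Worse, the implication in your closing chain, ``countable singular set $\Rightarrow$ tame'', is false without finite rank entering a second time: the shifts of Section~\ref{sec: Toeplitz shifts with finitely many singular fibres} (Theorem~\ref{thm: tame in forward but non-tame in general} with $\mc L=\{a,b\}^\ast$) have a single singular orbit yet are non-tame, because the singular fibre is uncountable. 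So the finite-fibre bound of Lemma~\ref{lem-x} must be used inside the tameness argument, not only to count singular points.

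Here is how the step is actually closed, which is what the paper's terse citation amounts to. Suppose $(t_n)\subset\Z$ were an independence sequence for a pair of letter cylinders; pass to a subsequence with $t_ng\to z$. By Proposition~\ref{prop: continuity points don't matter}, only countably many choice functions are realised by points $x$ with $\pi(x)\notin D-z$. The remaining realising points lie in $\pi^{-1}(D-z)$, which is countable because $D$ is contained in the countable singular set and every fibre has at most rank-many elements by Lemma~\ref{lem-x}; since each point realises at most one choice function along $(t_n)$, these contribute at most countably many as well. Hence not every $\varphi\in\{0,1\}^{\N}$ is realised, a contradiction, and the same argument applies to sequences of negative times, so $(X,\sigma)$ is tame. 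Finally, your detour through Lemma~\ref{lem:thick-double-path}(1) and essential thickness is harmless but unnecessary: countably many singular points directly give countable $D$; what finite Toeplitz rank really buys in this direction is the uniform bound on fibre cardinality used in the counting above.
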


\begin{proof} Let $(X_B,\varphi_\om)$ be a finite rank Toeplitz Bratteli-Vershik representation of  $(X,\sigma)$. 
If the maximal equicontinuous factor of 
$(X_B,\varphi_\om)$  has uncountably many singular fibres then, by Cor.~\ref{thick-double-path} it is thick. 
By Theorem \ref{Toeplitz non-tame},  $(X_B,\varphi_\om)$ and hence $(X,\sigma)$ is non-tame.

Conversely, suppose that $(X,\sigma)$ has countably many singular fibres. 
By Lemma~\ref{lem-x} all fibres are finite.
The result follows from Corollary \ref{cor: sufficient criterion for tameness} below.
\end{proof}

\begin{remark}
In fact, the same techniques and a little more care will give us a slightly stronger version of Theorems \ref {Toeplitz non-tame} and \ref{Toeplitz finite rank non-tame}. Namely, the statements also hold true if one replaces the word non-tame by either forward non-tame or backward non-tame. What one has to ensure in these situations is that the sequence of times $(t_n)_{n\geq 0}$ is either always positive or always negative. The point in the proof where one has to be more careful is in the statement and proof of Proposition
\ref{arithmetic-progression-prop}: in the case of forward tameness, for example, one would require that $i_0<j_0<j_1<j_2$, so that each $t_n$ as defined in the proof of 
Theorem \ref{Toeplitz non-tame} is strictly positive. To achieve this requires simply further telescoping.
\end{remark}

For non-tame constant length substitution shifts with a coincidence,   an independence set can be explicitly computed, as the following example illustrates. 

\begin{myexp} [continues=ex:examplezero] We illustrate Prop.~\ref{arithmetic-progression-prop} and Theorem~\ref{Toeplitz non-tame} with our
the substitution in Example \ref{ex:first}.  The graph of Figure~\ref{thickness-not-rank-picture-1} has no parallel  edges of thickness $2$, but if we telescope two levels together we get a pair of parallel edges between the vertex $\{a,b\}$ above and the vertex $\{a,b\}$ 
below. For substitutions, telescoping amounts to taking powers of the substitution. We therefore need to work (at least) with the second power $\theta^2$ of the substitution:
\[
\begin{array}{c} a\\ b\\ c \end{array} 
\mapsto
\begin{array}{c} a\\ a\\ a \end{array}
\!\!\!\!\!\!{\color{blue} \begin{array}{c} a\\ a\\ a \end{array}}
\!\!\!\!\!\!\begin{array}{c} c\\ c\\ c \end{array}
\!\!\!\!\!\!\begin{array}{c} a\\ a\\ a \end{array}
\!\!\!\!\!  \begin{array}{c} a\\ a\\ a \end{array}
\!\!\!\!\!\!{\color{blue} \begin{array}{c} a\\ b\\ a \end{array}}
\!\!\!\!\!\!\begin{array}{c} c\\ b\\ c \end{array}
\!\!\!\!\!\!\begin{array}{c} a\\ a\\ a \end{array}
\!\!\!\!\!\begin{array}{c} a\\ a\\ a \end{array}
\!\!\!\!\!\!{\color{blue} \begin{array}{c} a\\ b\\ b \end{array}}
\!\!\!\!\!\!{\color{red} \begin{array}{c} b\\ b\\ b \end{array}}
\!\!\!\!\!\!\begin{array}{c} a\\ a\\ a \end{array}
\!\!\!\!\!\begin{array}{c} a\\ a\\ a \end{array}
\!\!\!\!\!\!\begin{array}{c} a\\ a\\ a \end{array}
\!\!\!\!\!\!\begin{array}{c} c\\ c\\ c \end{array}
\!\!\!\!\!\!\begin{array}{c} a\\ a\\ a \end{array}
\]
We see that the restrictions of ${\theta^2}_5$ and ${\theta^2}_9$ to $\{a,b\}$ are both equal to the identity. Furthermore ${\theta^2}_1$ is the projection onto letter $a$.\footnote{In the coloured version, the images of ${\theta^2}_1$, ${\theta^2}_5$, and ${\theta^2}_9$ are marked blue, that of ${\theta^2}_{10}$ is in red.} 
We may therefore chose $\theta\ho{1}=\theta^2$, $A_{2} = \{a,b\} = A_{1}$ and  
the arithmetic progression $j_0=1$, $j_1=5$, $j_2=9$. Since here the extended Bratteli diagram is stationary, all $\theta\ho{n}$ and $A_{n}$ can be taken to be equal.
As ${\theta^2}_{10}$ is the projection onto the letter $b$ we may take $i_1=10$.   
Therefore, the independence sequence from the proof of Theorem~\ref{Toeplitz non-tame}  is given by $t_0=0$, 
   and $t_{n} = t_{n-1} -5  \cdot 16^{2n-1}+ 4\cdot 16^{2n-2}$ for $n\geq 1$.
\end{myexp}

\begin{myexp}
As is the case with many properties, tameness is not preserved under strong orbit equivalence (see \cite{GjerdeJohansen} for a definition). Take the two substitutions
$$\begin{array}{c c l}
a &\mapsto  & aabaa\\ 
b & \mapsto & abbaa \\
\end{array}
$$
and
 $$\begin{array}{c c l}
a &\mapsto  & aaaba\\ 
b & \mapsto & abbaa. \\
\end{array}
$$
Then both substitution shifts are conjugate to the stationary Bratteli-Vershik system they define, see Section~\ref{sec:toeplitz-constant}. Furthermore since the underlying unordered  Bratteli diagrams are identical, these two shifts are strong orbit equivalent. However, by computing the graphs $\mathcal G_\theta$ we see that the first has one orbit of singular fibres,  whereas the second has uncountably many. In the language of this section, the second has essential thickness two. Hence by Theorem \ref{Toeplitz finite rank non-tame}, the second shift is non-tame. As the first  Toeplitz system has one singular fibre, it  is tame by Theorem~\ref{Toeplitz finite rank non-tame}.

\end{myexp}

\section{Almost automorphy and semicocycle extensions}\label{sec: semicocycle extensions}

Any almost automorphic system can be realised as a {\em semicocycle extension} of its maximal equicontinuous factor. 
Conversely, semicocycle extensions are a useful tool to construct almost automorphic systems. 
Below we will utilize this tool to construct interesting examples of Toeplitz shifts which underline the necessity of the conditions formulated in 
Theorem~\ref{Toeplitz finite rank non-tame}.

In this section, we recall the relevant details from the literature, essentially following the discussion in \cite[Section~2]{FuhrmannKwietniak2020}
where actions of general discrete groups are considered; the interested reader may also consult \cite{DownarowiczDurand2002,Downarowicz2005}.
Here, we simplify and restrict to $\Z$-actions.
We  mention that for the main purposes of this article, where we are  concerned with almost automorphic \emph{symbolic} extensions, the concept of \emph{separating covers}, as utilized in 
\cite{Paul1976,MarkleyPaul1979}, 
could also be employed.
Nevertheless, we believe that the semicocycle approach is not only more flexible but also more transparent.

Recall that $\mc Z$ is a compact metrizable monothetic group with topological generator $g$ so that $(\mc Z,+g)$ is a minimal rotation. 
Given $\hat z\in \mc Z$ and a compact metrizable space $K$, 
we call a map $f\: \hat z +\Z g \to K$ a ($K$-valued) \emph{semicocycle} over the 
pointed dynamical system
$(\mc Z,+g,\hat z)$ if it is continuous in the subspace topology on $\hat z+\Z g\ssq \mc Z$.

Given a semicocycle $f\: \hat z +\Z g \to K$ we consider the closure of its graph, 
\[
 F:=\overline{\{(z,k)\in \mc Z \times K\:z\in \hat z+\Z g,\ k=f(z)\}}\ssq \mc Z \times K.
\]
We use the letter $F$ also to denote the map
\begin{equation}\label{eq-F}
F\:\mc Z\to 2^K, \quad F(z)=\{k\in K\: (z,k)\in F\}
\end{equation}
to which we refer as the {\em section} function. Here $2^K$ denotes the set of subsets of $K$ equipped with the topology induced by the Hausdorff metric.
Since $(\mc Z,+g)$ is minimal and $K$ is compact, $F(z)$ is non-empty
for every $z\in \mc Z$. 

As we do not assume that $f$ is  uniformly continuous, its graph closure $F$ is not necessarily the graph of a function, that is, $F(z)$ may contain more than one point. In this case we call a point $z\in\mc Z$ a  \emph{discontinuity point} of $f$.
We collect the discontinuities of $f$ in the set
\begin{align*}
    D_f=\{z\in \mc Z : |F(z)|>1\}\ssq \mc Z.
\end{align*}
By definition we have
$(\hat z+ \Z g)\cap D_f=\emptyset$ and $f$ admits a continuous
extension to the complement of $D_f$, which we also denote by $f$,
\begin{align*}
f:\mc Z\setminus D_f  \to K,  \qquad f(z) = k_z,
\end{align*}
where $k_z$ is the unique point in $F(z)$.
Note that if we take any other point $\hat z\in \mc Z$ whose orbit does not contain a discontinuity  point and define $f':\hat z +\Z g \to K$ with the above extension through $f'(\hat z +ng) = f(\hat z +ng)$ then we get the same graph closure $F$ and the same set of discontinuities, $D_{f'}=D_f$.  
\begin{lem}\label{lem-new}
Let $f$ be a $K$-valued semi-cocycle for a minimal rotation $(\mc Z,+g)$ and let $F$ be the associated section function as in (\ref{eq-F}). Then $F$ is continuous on $D_f^c$. 
In particular, given $w\in D_f^c$, for any neighbourhood $W\subset K$ of $f(w)$ there exists a neighbourhood $U\subset \mc Z$ of $0$ such that for all $z'\in U+w$ we have $F(z')\subset W$. 

\end{lem}
\begin{proof} 
Choose a compatible metric $d$ on $K$ and denote by $d_H$ the associated Hausdorff metric on $2^K$. 
Let $w\in\mc D^c_f$ so that $w$ is a point of continuity of $f$, i.e.\ $F(w) = \{f(w)\}$. 
Then for any $\eps>0$ 
there is a neighbourhood $U\subset \mc Z$ of $0$ such that for all $z'\in (U+w)\cap D_f^c$ we have $d(f(z'),f(w))\leq\eps$. This implies $d_H(F(z'),F(w))\leq \eps$ first for all $z'\in (U+w)\cap D_f^c$, but then also for all $z'\in U+w$, as $F(z')\ssq \overline{\bigcup_{\tilde z\in (U+w)\cap D_f^c}F(\tilde z)}$ by definition of $F$ as the 
graph closure.
The statement follows.
\end{proof}
 
Recall that our shifts can be defined  over a compact, and not necessarily finite set; see Section~\ref{sec: basic notation}.
\begin{definition} Let $f$ be a 
 $K$-valued semicocycle  over $(\mc Z,+g,\hat z)$, and let  $X_f$ be the shift-orbit closure of the sequence $\hat f=(\hat f_n) \in K^\Z$, 
$$\hat f_n :=  f(\hat z+ ng). $$ We call $(X_f,\sigma)$   the  {\em  shift associated to $f$}.
\end{definition}
Any element of $X_f$ is hence a limit of a sequence of translates of $\hat f$.
Put differently, for each $x\in X_f$ there is $(n_k) \in \Z^\N$ such that for every $m\in \Z$ we have
$ x_m= \lim_{k\to\infty}  f(\hat z+(n_k+m)g).$
The group $\mc Z$ acts on $\mc Z\times K$ by left translation in its first factor. We say that a semicocycle $f$ over $(\mc Z,+g,\hat z)$ is \emph{separating}\footnote{Note the slight terminological deviation from 
\cite{DownarowiczDurand2002,Downarowicz2005,FuhrmannKwietniak2020} where the phrase
\emph{invariant under no rotation} is used instead of the term \emph{separating} (which we take from 
\cite{Paul1976,MarkleyPaul1979}).} if the stabiliser in $\mc Z$ of its graph $F$ is trivial, that is: if $z\in \mc Z$ satisfies $F(z+y) = F(y)$ for all $y$ then $z=0$. 
\begin{thm}\cite[Theorem 5.2]{DownarowiczDurand2002} \label{thm:general theory}
The shift associated to a separating semicocycle over $(\mc Z,+g,\hat z)$
is an almost automorphic extension of $(\mc Z,+g)$. The equicontinuous factor map 
$\pi:(X_f,\sigma)\to (\mc Z,+g)$ satisfies
\begin{align}\nonumber 
    x_n\in F(\pi(x)+ng) \quad \text{ for each } (x_n)_{n\in\Z}\in X_f \text{ and } n\in \Z.
\end{align}
\end{thm}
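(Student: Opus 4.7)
The plan is to construct $\pi$ as the first-coordinate projection from the diagonal orbit closure
\[
\tilde X := \overline{\{(\hat z + ng,\,\sigma^n\hat f) : n \in \Z\}} \subseteq \mc Z\times X_f,
\]
taken under the action $(z,y)\mapsto(z+g,\sigma y)$. Minimality of $(\mc Z,+g)$ implies the first projection $p_1\:\tilde X\to\mc Z$ is surjective, and the second projection $p_2\:\tilde X\to X_f$ is surjective by construction: for any $x = \lim_k \sigma^{n_k}\hat f\in X_f$, compactness of $\mc Z$ yields a subsequence with $\hat z + n_k g \to z$, placing $(z,x)\in\tilde X$. Once $p_2$ is shown to be bijective, $\pi := p_1\circ p_2^{-1}$ is the desired continuous equivariant surjection from $(X_f,\sigma)$ to $(\mc Z,+g)$.

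The fibre inclusion $x_n \in F(\pi(x)+ng)$ is tautological: along any $(n_k)$ with $(\hat z + n_k g, \sigma^{n_k}\hat f)\to(\pi(x), x)$, the pairs $(\hat z + (n_k + n)g, f(\hat z + (n_k + n)g))$ lie on the graph of $f$ and converge to $(\pi(x)+ng, x_n)$, which is therefore in the closed set $F$.

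The main step---and the principal obstacle---is the injectivity of $p_2$, which is where the separating hypothesis enters. Suppose $(z,x), (z',x)\in\tilde X$ and set $y := z' - z$. I would aim to show that the translation $T_y\:(u,v)\mapsto(u+y,v)$ preserves $\tilde X$: granting this, the fibre inclusion yields $F(u+y)\supseteq F(u)$ for every $u\in p_1(\tilde X)=\mc Z$, and by symmetry $F(u+y) = F(u)$ throughout $\mc Z$, contradicting the separating condition unless $y=0$. For the $T_y$-invariance, one notes that the set $A := \{(u,v)\in\tilde X : (u+y,v)\in\tilde X\}$ is closed, $(+g,\sigma)$-invariant, and non-empty (as $(z,x)\in A$). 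The task is then to deduce $A = \tilde X$; this is the main technical difficulty and I would approach it by combining the minimality of the base $(\mc Z,+g)$ with a careful density argument showing that $(\hat z, \hat f)$ itself must belong to $A$, whence $\tilde X = \overline{\mathrm{orbit}(\hat z,\hat f)} \subseteq A$.

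For almost one-to-one-ness: $D_f$ is a countable union of the closed sets $D_f^{1/n}=\{z : \mathrm{diam}\,F(z)\geq 1/n\}$ (closed by upper semicontinuity of $z\mapsto F(z)$), each nowhere dense because $\hat z + \Z g\subseteq\mc Z\setminus D_f$ is dense in $\mc Z$; hence $D_f$ is meager and $R := \mc Z\setminus\bigcup_{n\in\Z}(D_f - ng)$ is a dense $G_\delta$. For $z\in R$ the fibre inclusion collapses $\pi^{-1}(z)$ to the single sequence $(f(z+ng))_{n\in\Z}$, yielding density of regular points. Minimality of $(X_f,\sigma)$ then follows from the observation that $\hat z \in R$, so that for any minimal subset $M\subseteq X_f$, approximating $\hat z$ by $\pi(M)$-points and extracting a limit in $M$ over the regular fibre $\pi^{-1}(\hat z)=\{\hat f\}$ forces $\hat f\in M$, hence $M = X_f$.
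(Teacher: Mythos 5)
The paper itself offers no proof of this theorem (it is quoted from Downarowicz--Durand), so I am comparing your argument with the standard one. Your skeleton is the right one: the diagonal orbit closure $\tilde X\subset \mc Z\times X_f$ of $(\hat z,\hat f)$, the tautological fibre inclusion, injectivity of $p_2$ via the separating hypothesis, and almost one-to-one-ness from meagreness of $D_f$. But there is a genuine gap exactly where the separating hypothesis has to do its work: you never actually prove that $p_2$ is injective (equivalently, that $\pi$ is well defined). You reduce it to showing $A=\tilde X$ for the closed, invariant, non-empty set $A=\{(u,v)\in\tilde X:(u+y,v)\in\tilde X\}$ and then only announce that you ``would approach'' this by a density argument; a closed invariant non-empty subset of a possibly non-minimal system need not be everything, so this is an intention, not a proof. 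Worse, the order of your argument is circular: the natural way to force $A=\tilde X$ is minimality of $\tilde X$, but you prove minimality only at the very end, for $X_f$, and using the factor map $\pi$ whose existence is precisely what is at stake. A further small omission: your claim that $T_y$-invariance of $\tilde X$ plus the fibre inclusion yields $F(u)\subseteq F(u+y)$ tacitly uses the realisation statement $F(u)=\{v_0:(u,v)\in\tilde X\}$ (the tautological direction gives only ``$\subseteq F(u)$''); this is true by a compactness argument but is nowhere stated.

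The gap can be closed with the tools you already have, reordered. First note that $F(\hat z+ng)$ is the singleton $\{f(\hat z+ng)\}$, because $f$ is continuous in the subspace topology of the orbit; hence your fibre inclusion shows that the $p_1$-fibre of $\tilde X$ over each $\hat z+ng$ is the single point $(\hat z+ng,\sigma^n\hat f)$. Since any minimal subset $M\subseteq\tilde X$ has $p_1(M)=\mc Z$ by minimality of the rotation, $M$ contains $(\hat z,\hat f)$ and so equals $\tilde X$: thus $\tilde X$ is minimal, with no appeal to the separating condition. Now suppose $(z,x),(z+y,x)\in\tilde X$. Minimality provides $k_j$ with $(z+k_jg,\sigma^{k_j}x)\to(\hat z,\hat f)$, and then $(z+y+k_jg,\sigma^{k_j}x)\to(\hat z+y,\hat f)\in\tilde X$. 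Applying the fibre inclusion to this one point gives $(\hat z+ng+y,\,f(\hat z+ng))\in F$ for all $n$, hence, taking closures, $F+(y,0)\subseteq F$; the symmetric argument with $-y$ gives equality, so $F(u+y)=F(u)$ for all $u$ and the separating hypothesis forces $y=0$. This yields injectivity of $p_2$ without the realisation lemma, after which your definition $\pi=p_1\circ p_2^{-1}$, your fibre inclusion, and your meagreness/almost one-to-one-ness and minimality arguments for $X_f$ go through as written.
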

Accordingly, we call $(X_f,\sigma)$ a \emph{semicocycle extension of $(\mc Z,+g)$} defined by $f$. Recall that the set of singular points in $\mc Z$ of the semicocycle extension is the set of $z\in\mc Z$ such that $|\pmi(z)|>1$.  As any two distinct sequences must differ on at least one index we see that $\pmi(z)$ contains more than one point if and only if a translate of $z$ is a discontinuity point for the semicocycle. The singular points of the semicocycle extension are therefore given by $D_f+\Z g$.

The following theorem says that any almost automorphic system is conjugate to a shift over a compact alphabet, notably one obtained by a semicocycle. 
\begin{thm}[{cf. \cite[Theorem~6.4]{Downarowicz2005},\cite[Theorem~5.2]{DownarowiczDurand2002},\cite[Theorem~2.5]{FuhrmannKwietniak2020}}]
\label{thm: equivalence semicocylce extension and almost automorphic system}
Consider a topological dynamical system  $(X,T)$.
The following statements are equivalent.
\begin{enumerate}
\item\label{cond:a} $(X,T)$ is almost automorphic.
\item\label{cond:b} 
The maximal equicontinuous factor of
$(X,T)$ is a minimal rotation and $(X,T)$ is
conjugate to a semicocycle extension of this minimal rotation.
\item\label{cond:c} $(X,T)$ is conjugate to an almost automorphic shift. 
\end{enumerate}
\end{thm}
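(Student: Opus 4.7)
My plan is to establish the three implications, of which only one carries real content. The implication (\ref{cond:c})$\Rightarrow$(\ref{cond:a}) is immediate, as almost automorphy is preserved under conjugacy, and (\ref{cond:b})$\Rightarrow$(\ref{cond:c}) is a direct consequence of Theorem~\ref{thm:general theory}, which exhibits any semicocycle extension as an almost automorphic shift inside $K^\Z$. The substance therefore lies in (\ref{cond:a})$\Rightarrow$(\ref{cond:b}).

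First I would note that the maximal equicontinuous factor of an almost automorphic system is, by definition, minimal equicontinuous and hence a minimal rotation $\pi\colon (X,T)\to(\mc Z,+g)$. Since $(X,T)$ is almost automorphic, pick any regular point $\hat z\in\mc Z$ and let $\hat x\in X$ be its unique preimage. Setting $K:=X$ (which is compact metrisable), I would define the candidate semicocycle
\[
f\colon \hat z+\Z g\to X,\qquad f(\hat z+ng):=T^n\hat x.
\]

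Three things then need verification: continuity of $f$ in the subspace topology on $\hat z+\Z g$, that $f$ is separating, and that the associated shift $(X_f,\sigma)$ is conjugate to $(X,T)$. Continuity is the crux: if $\hat z+n_k g\to \hat z+ng$ in $\mc Z$, then any subsequential limit $y$ of $T^{n_k}\hat x$ satisfies $\pi(y)=\hat z+ng=\pi(T^n\hat x)$, and because the set of injectivity points of $\pi$ is $T$-invariant, $T^n\hat x$ is itself an injectivity point, forcing $y=T^n\hat x$ and hence $T^{n_k}\hat x\to T^n\hat x$. This identification then yields the explicit description of the graph closure $F=\{(\pi(x),x):x\in X\}$, so $F(z)=\pi^{-1}(z)$ for every $z\in\mc Z$. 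The separating property then follows by specialising the identity $F(y+z)=F(y)$ to the regular point $y=\hat z$: this gives $\pi^{-1}(\hat z+z)=\{\hat x\}=\pi^{-1}(\hat z)$ and hence $z=0$. For the conjugacy, the orbit map $\Phi\colon X\to X^\Z$, $x\mapsto (T^n x)_{n\in\Z}$, is a continuous injection intertwining $T$ and $\sigma$ with $\Phi(\hat x)=\hat f$; minimality of both $(X,T)$ and $(X_f,\sigma)$ then forces $\Phi(X)=X_f$.

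The only non-formal step is the continuity of $f$, and I expect this to be the main obstacle. It rests entirely on the fact, recalled in Section~\ref{sec: basic notation}, that the injectivity points of $\pi$ form a $T$-invariant dense $G_\delta$; it is this that allows the naive definition of $f$ on the dense (and hence very much non-discrete) subset $\hat z+\Z g$ of $\mc Z$ to remain continuous. Once that is in hand, both the separating property and the conjugacy drop out of the explicit description of $F$ and the minimality of the two systems.
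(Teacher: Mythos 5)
Your proposal is correct and follows essentially the same route as the paper: the paper likewise treats (\ref{cond:b})$\Rightarrow$(\ref{cond:c})$\Rightarrow$(\ref{cond:a}) as immediate and proves (\ref{cond:a})$\Rightarrow$(\ref{cond:b}) by defining the maximal semicocycle $f(\hat z+ng)=T^n(\hat x)$ at a regular point $\hat z$ with unique preimage $\hat x$ and obtaining the conjugacy via the orbit map $x\mapsto (T^nx)_{n\in\Z}$. Your explicit verifications of continuity, of $F(z)=\pi^{-1}(z)$, and of the separating property merely spell out details the paper leaves to a brief comment.
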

From the above, the implications (\ref{cond:b}) $\Rightarrow$ (\ref{cond:c})$\Rightarrow$ (\ref{cond:a}) are clear.
Let us briefly comment on (\ref{cond:a}) $\Rightarrow$ (\ref{cond:b})
by describing how to obtain a realisation of $(X,T)$ as a semicocycle extension.
The maximal equicontinuous factor of a minimal $\Z$-action is a minimal rotation which we denote here $(\mc Z,+g)$. Given a regular point $\hat z\in \mc Z$ with its unique pre-image $\hat x$ under $\pmax$, the function $f:\hat z+\Z g \to X$,
$$f( \hat z +ng ):= T^n(\hat x)  $$
is continuous, hence a semicocyle over $(\mc Z,+g,\hat z)$. 
The conjugacy between $(X,T)$ and $(X_f,\sigma)$ comes about as any element $x\in X$ defines a sequence in $(s_n)\in X^\Z$ through $s_n = T^n(x)$ and the sequence $\hat f$ defined by $f$ is the image of $\hat x$ under this map. 

The set of discontinuity points $D_f$ coincides with the set of singular points of $\mc Z$.
As the set of discontinuity points must be contained in the set of singular points, and here they coincide, we call the above semicocycle {\em maximal}.
\bigskip

If we are given an almost automorphic shift, then we may realise it as a semicocycle extension also in different way.
\begin{definition}\label{definition of D}
Let $(X,\sigma)\subset (K^\Z,\sigma)$ be an almost automorphic shift with maximal equicontinuous factor map $\pmax:(X,\sigma)\to (\mc Z,+g)$. We call
\begin{equation}  \nonumber
D = \{z\in\mc Z : \exists x,y\in \pmi(z) \mbox{ with } x_0\neq y_0 \}
\end{equation}
the set of \emph{discontinuity points} of the shift and the continuous map  $f^{can}:\mc Z\setminus D\to K$,
$$f^{can}(z) = x_0,\quad x\in \pi^{-1}(z)$$
the \emph{canonical semicocycle} of the shift.
\end{definition}
Indeed, if we choose a regular point $\hat z\in \mc Z$ with its unique pre-image $\hat x$ under $\pmax$, we see that 
$f^{can}$ is the continuous extension of
$\hat z +ng \to \hat x_n$ to $\mc Z\setminus D$ and hence a semicocycle over $(\mc Z,+g,\hat z)$. 
Thus $D$ is the set of discontinuity points of the canonical semicocycle.

The evaluation map $ev_0:X\to K$, $ev_0(x) = x_0$, relates
$f^{can}$ to the maximal semicocycle $f$ through $ev_0\circ f(\hat z +ng)= \hat x_n$.
The pointwise extension of $ev_0$ to $X^\Z\to K^\Z$ restricts to a conjugacy between $X_f\subset X^\Z$ and $X\subset K^\Z$. As $ev_0$ is continuous, the set of discontinuity points $D$ of the shift is contained in $D_f$, but it is often substantially smaller. 
In fact, the set $D$ is associated to the concrete realisation of the shift and is not invariant under conjugacy, as can be seen in the following example.
\begin{myexp} 
Let $\mc Z=S^1=\R/2\pi\Z$, the circle, and $g\in S^1$ be an irrational angle (incommensurate with $2\pi$). The dynamical system $(S^1,+g)$ is a minimal rotation. Let $\alpha\in\Z g$, $\alpha\neq 0$, and $\beta\in S^1\backslash\Z g$. 
The
characteristic function $\chi_{[0,\alpha)}$ on the half-open interval $[0,\alpha)$ is continuous on the $\Z$-orbit of $\beta$ and thus $f= \chi_{[0,\alpha)}$ defines a semicocycle over $(S^1,+g,\beta)$ which can easily be seen to be separating. Clearly $f$ is discontinuous at exactly two points, namely 
$D_f=\{0,\alpha\}$. Indeed, the sections $F(0)$ and $F(\alpha)$ contain two points, $\{0,1\}$, whereas all other sections contain only one point. In particular, different choices for $\alpha\in \Z g\backslash\{0\}$ lead to different semicocycle extensions, that is, different symbolic dynamical systems which, in particular, have different sets of discontinuities $D=\{0,\alpha\}$.
Note, however, that all these semicocycle extensions are topologically conjugate dynamical systems, as one can see using their description as rotations on 
 the Cantor set obtained by disconnecting the circle $S^1$ along the $\Z$-orbit of $g$
 \cite{forrest2002cohomology}.
 
We mention as an aside that the above semicocycle occurs frequently in the description of the physics of quasicrystalline condensed matter.
In particular, the function $V(n) = \chi_{[0,\alpha)}(\beta+n g)$ serves as the potential in the so-called Kohmoto model which describes the motion of a particle in a one-dimensional quasicrystal  which can be obtained from the above data $\beta$ and $\alpha$ by means of the cut \& project method \cite{KohmotoOono1984}. 
\end{myexp}

In the context of constant length substitutions we have the following description of the discontinuity points.
\begin{lem}\label{cocycle-discontinuities}
Let $\theta$ be a primitive aperiodic substitution of constant length which has a coincidence and trivial height.
The set of discontinuities $D$ of $(X_\theta, \sigma)$ coincides with the set $\hat\Sigma_\theta$ from Section~\ref{sec: substitutions}.
\end{lem}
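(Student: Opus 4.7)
The plan is to identify both $D$ and $\hat\Sigma_\theta$ with the same combinatorial condition on $z = \ldots z_2 z_1 \in \Z_\ell$, namely that the stable limit $S_z := \bigcap_{n\geq 1}C_n(z)$ of the decreasing sequence $C_n(z) := \theta_{z_1}\theta_{z_2}\cdots\theta_{z_n}(\Aa)\subseteq\Aa$ has cardinality at least $2$ (the sequence stabilizes since $\Aa$ is finite). The first step is to show
\[
\{x_0 : x\in\pmi(z)\} \;=\; S_z,
\]
from which $z\in D$ if and only if $|S_z|\geq 2$ is immediate. The inclusion $\subseteq$ unwinds the partition $X_\theta = \bigsqcup_{0\leq i<\ell^n}\sigma^i(\theta^n(X_\theta))$ of Section~\ref{sec: substitutions}: any $x\in \pmi(z)$ is of the form $\sigma^{i_n}(\theta^n(y^{(n)}))$ with $i_n := z_1 + z_2\ell + \cdots + z_n\ell^{n-1}$, and the constant-length structure of $\theta$ gives $x_0 = \theta_{z_1}\cdots\theta_{z_n}(y^{(n)}_0)\in C_n(z)$. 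For the reverse inclusion, given $a\in S_z$, pick $b_n\in\Aa$ with $\theta_{z_1}\cdots\theta_{z_n}(b_n)=a$; primitivity yields $y^{(n)}\in X_\theta$ with $y^{(n)}_0 = b_n$, and compactness of $X_\theta$ provides an accumulation point $x$ of $x^{(n)} := \sigma^{i_n}(\theta^n(y^{(n)}))$ lying in $\pmi(z)$ with $x_0 = a$.

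It then remains to show $|S_z|\geq 2$ if and only if $z\in \hat\Sigma_\theta$. For the direction $(\Leftarrow)$, a path $B_1\to B_2\to\cdots$ in $\mathcal G_\theta$ labelled by $z_1,z_2,\ldots$ satisfies $\theta_{z_i}(B_{i+1}) = B_i$ by the edge convention, so $B_1 = \theta_{z_1}\cdots\theta_{z_n}(B_{n+1})\subseteq C_n(z)$ for every $n$, whence $B_1\subseteq S_z$; aperiodicity forces $|\Aa|\geq 2$ and every vertex of $\mathcal G_\theta$ has cardinality at least $2$, so $|S_z|\geq 2$. For the direction $(\Rightarrow)$, define $B_n := \bigcap_{k\geq 0}\theta_{z_n}\theta_{z_{n+1}}\cdots\theta_{z_{n+k}}(\Aa)$; each $B_n$ is the attained value of a decreasing sequence of subsets of $\Aa$, hence $B_n = \theta_{z_n}\cdots\theta_{z_{n+k_n}}(\Aa)$ for some finite $k_n$. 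Pushing $\theta_{z_n}$ through the (attained) intersection defining $B_{n+1}$ yields $\theta_{z_n}(B_{n+1}) = B_n$; iterating from $B_n$ down to the top gives $\theta_{z_1}\cdots\theta_{z_{n-1}}(B_n) = S_z$, forcing $|B_n|\geq |S_z|\geq 2$ and confirming that $B_n$ is a vertex of $\mathcal G_\theta$. Hence $B_1\to B_2\to\cdots$ is the desired infinite path and $z\in\hat\Sigma_\theta$.

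The only mildly technical point is the bookkeeping around the intersections $B_n$ in the converse direction, but since these are attained at finite stages, maps can be pushed through without issue. The hypotheses of coincidence and trivial height are used only implicitly, to ensure that $(X_\theta,\sigma)$ is almost automorphic (via Dekking's theorem) with maximal equicontinuous factor $(\Z_\ell,+1)$, so that $D$ is well-defined.
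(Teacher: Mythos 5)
Your proof is correct and takes essentially the same route as the paper's (much terser) argument: the paper likewise characterises $D^c$ by the collapse of some block $\theta_{z_k}\cdots\theta_{z_n}(\mathcal A)$ to a singleton, which is exactly your condition $|S_z|=1$, and then simply asserts that "the result follows". The two points you spell out --- the identification $\{x_0 : x\in\pi^{-1}(z)\}=S_z$ via the tower partition, and the construction of an infinite path in $\mathcal G_\theta$ from the stabilised intersections $B_n$ --- are precisely the details the paper leaves implicit, and your treatment of them is sound.
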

\begin{proof}
$D^c$ is the set of points $z\in \Z_{\ell}$ for which all points in the fibre $\pi^{-1}(z)$  have the same entry at the $0$-th index. This happens if and only if {one block} of $z$'s entries, say
$z_n \dots z_k$ is such that $|\theta_{z_k}\dots \theta_{z_n}(\mathcal A)|=1$. The result follows.
\end{proof}

The corresponding result for more general Toeplitz shifts reads as follows. We leave its simple proof to the reader. Here we again assume that the given Toeplitz shift is conjugate, via $p_1$ as in \eqref{def-projection-map},
  to the top level shift $(X_1,\sigma)$ of the Bratteli-Vershik representation.
    
\begin{prop}
{Consider a Toeplitz shift with maximal equicontinuous factor $(\Z_{\ell_n},+1)$ and
Toeplitz-Brattelli-Vershik representation $(X_B,\varphi_\omega)$.}  
Then $z\in \Z_{\ell_n}$ is a discontinuity point if and only if $z=\ord(\gamma)$ for some path $\gamma=(\gamma_n)$ of the extended Bratteli diagram for which $r(\gamma_n)$ has at least $2$ elements, for all $n\geq 0$.  
\end{prop}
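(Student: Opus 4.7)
The plan is to associate to each $z\in\mc Z$ the sequence of subsets $A_n\subset V_n$ consisting of those level-$n$ vertices visited by some element of $\om^{-1}(z)$, and to verify that this sequence assembles into an infinite path of the extended Bratteli diagram whose vertex cardinalities are at least $2$ precisely when $z$ is a discontinuity point. As in Section~\ref{sec: shift interpretation} I assume $\ell_0=1$ and identify $\pi^{-1}(z)$ with $\om^{-1}(z)$ via the conjugacy $r\circ p_1$, so that discontinuity of $z$ becomes the statement $|\{r(\gamma_0):\gamma\in\om^{-1}(z)\}|\geq 2$.

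For the backward direction, suppose $\tilde\gamma=(\tilde\gamma_n)\in X_{\tilde B}$ satisfies $\ord(\tilde\gamma)=z$ and $|r(\tilde\gamma_n)|\geq 2$ for every $n\geq 0$. Writing $A_{n+1}:=r(\tilde\gamma_n)$, the edge relation in Definition~\ref{extended-diagram-definition} gives $\theta^{(n)}_{z_n}(A_{n+1})=A_n$ for $n\geq 1$, so each $\theta^{(n)}_{z_n}|_{A_{n+1}}$ is surjective onto $A_n$. Picking two distinct $a_0,a_1\in A_1$, I would recursively choose $b^i_{n+1}\in A_{n+1}$ with $\theta^{(n)}_{z_n}(b^i_{n+1})=b^i_n$ and $b^i_1:=a_i$; the unique edge in $E_n$ with range $b^i_{n+1}$ and label $z_n$ then has source $b^i_n$, producing two paths $\gamma^0,\gamma^1\in X_B$ with $\ord(\gamma^i)=z$ and $r(\gamma^i_0)=a_i$, which witnesses $z\in D$.

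Conversely, assume $z\in D$ and set $A_n:=\{s(\gamma_n):\gamma\in\om^{-1}(z)\}\subset V_n$ for $n\geq 1$, non-empty by surjectivity of $\ord$. The identity $s(\gamma_n)=\theta^{(n)}_{\ord(\gamma_n)}(r(\gamma_n))$ combined with $r(\gamma_n)=s(\gamma_{n+1})\in A_{n+1}$ yields $\theta^{(n)}_{z_n}(A_{n+1})=A_n$; together with the unique edge from $v_0$ to $A_1\in\tilde V_1$, this data defines an infinite path $\tilde\gamma\in X_{\tilde B}$ with $\ord(\tilde\gamma)=z$. By the definition of $D$ one has $|A_1|=|\{r(\gamma_0):\gamma\in\om^{-1}(z)\}|\geq 2$; since $|A_n|=|\theta^{(n)}_{z_n}(A_{n+1})|\leq|A_{n+1}|$, cardinalities are monotone non-decreasing in $n$, so $|r(\tilde\gamma_n)|=|A_{n+1}|\geq 2$ for all $n\geq 0$. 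I anticipate no substantial obstacle: the whole argument is essentially an unwinding of Definition~\ref{extended-diagram-definition} and the shift interpretation of $X_B$, the only mildly subtle ingredient being the monotonicity of $(|A_n|)$ that allows the single inequality $|A_1|\geq 2$ to propagate through all levels.
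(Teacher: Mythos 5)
Your argument is correct, and since the paper explicitly leaves the proof of this proposition to the reader, there is no written proof to diverge from: your construction (taking $A_n$ to be the set of sources $s(\gamma_n)$ over $\gamma\in\ord^{-1}(z)$ to build the extended path, and conversely lifting two distinct vertices of $A_1$ through the surjections $\theta^{(n)}_{z_n}|_{A_{n+1}}$ to produce two paths in $X_B$ with equal labels but different first letters) is exactly the intended unwinding of Definition~\ref{extended-diagram-definition} together with the shift interpretation via $p_1$ and $r$. It is also consistent in style with the paper's proof of Lemma~\ref{lem-x}, and the monotonicity observation $|A_n|\leq|A_{n+1}|$ that propagates $|A_1|\geq 2$ to all levels is the one genuinely needed point, which you supply.
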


\section{Criteria for tameness of almost automorphic shifts}\label{sec: criteria for tameness of almost automorphic shifts}
In this part, we derive some general criteria for tameness and non-tameness of minimal
dynamical systems.
Recall that due to \cite{Huang2006,Glasner2018}, every tame minimal system is necessarily
almost automorphic.
Accordingly, we restrict to the study of almost automorphic shifts $(X,\sigma)\subset (K^\Z,\sigma)$ (or $(K^\N,\sigma)$) in this section, see also Theorem~3.3. 
We denote their maximal equicontinuous factors throughout  by 
$(\mc Z,+g)$ and the corresponding factor map by $\pi$.

To study tameness of the shift $(X,\sigma)$ we need to establish some terminology. 
Recall from Proposition \ref{prop: independence implies non-tame} that non-tameness manifests itself through the existence of a pair $V_a,V_b$ of (non-empty) disjoint compact subsets of $K$, together with an {\em independence sequence} $(t_n)$ for $V_a,V_b$, that is, a sequence $(t_n)_{n\in\N}$ in $\Z$ (or  $\N$ if we regard forward tameness)
 such that for any {\em choice function} $\varphi\in  \{a,b\}^{\N}$  we can find $x^\varphi\in X$ with $x^\varphi_{t_n} \in V_{\varphi(n)}$ for all $n\in\N$.  
Given a  choice function $\varphi$, a sequence $(t_n)$ and a pair $V_a,V_b\subset K$,
 we say that $x^\varphi$ {\em realises}  $\varphi$ along $(t_n)$ for $V_a$, $V_b$ if $x^\varphi_{t_n} \in V_{\varphi(n)}$ for all $n\in\N$. 
 Given a pair $V_a,V_b\subset K$ and a sequence $(t_n)_{n\in\N}$ with $t_n g\to z$ we  denote, for $E\subset \mc Z$,  
$$\Sigma_{(t_n)}^{V_a,V_b}(E)=\{\varphi\in  \{a,b\}^\N : \exists x^\varphi\in X\ \forall n\in\N:x^\varphi_{t_n}\in V_{\varphi(n)},{\pi(x^{\varphi})}\in E-z\},$$ 
the set of all choice functions which are realised along $(t_n)$ by points ${x^{\varphi}}\in X$ for which ${\pi(x^{\varphi})} \in E-z$. 
With this notation, $(t_n)_{n\in \N}$ is an  independence  sequence
for the pair $(V_a,V_b)$ if 
 $\Sigma_{(t_n)}^{V_a,V_b}(\mc Z) = \{a,b\}^\N$. 
 To simplify the notation we also write $\Sigma_{(t_n)}(E)$ for 
 $\Sigma_{(t_n)}^{V_a,V_b}(E)$ if the dependence on the pair $(V_a,V_b)$ is either clear from the context or one can take any pair of disjoint compact subsets of $K$. For instance, if we have a binary shift, that is, $K=\{a,b\}$, the only possible choice is $V_a=\{a\}$ and $V_b=\{b\}$, cf.\ Remark~\ref{rem: tameness of shifts with finite alphabet}.

{In the next set of results  $D$ denotes the set of discontinuities as defined in Definition~\eqref{definition of D}.}
We begin with a proposition which is implicit in the proof of \cite[Lemma~3.2]{FuhrmannKwietniak2020}.

\begin{prop}\label{prop: continuity points don't matter}
Let $(X,\sigma)\subset (K^\Z,\sigma)$ be an almost automorphic shift and let $ V_a,V_b\subset K$ be closed nonempty disjoint subsets. 
Let $(t_n)_{n\in\N}$ be a sequence in $\Z$ with $t_n g\to z$. 
The set of choice functions which are realised by  points $x\in X$ along $(t_n)$ with $\pi(x) \notin D-z$ is at most countable, that is, $\Sigma_{(t_n)}^{V_a,V_b}(D^c)$ is at most countable. 
\end{prop}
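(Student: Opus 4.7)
The plan is to show that each $x \in X$ with $\pi(x) + z \notin D$ realises at most one choice function in $\{a,b\}^{\N}$, and that any choice function which is realised by such an $x$ is necessarily eventually constant. Since there are only countably many eventually constant $\{a,b\}$-valued sequences, this immediately yields that $\Sigma_{(t_n)}^{V_a,V_b}(D^c)$ is at most countable.

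The central step will be a convergence lemma: if $\pi(x) + z \notin D$, then $x_{t_n} \to f^{can}(\pi(x) + z)$ as $n \to \infty$. I would prove this via a subsequence argument. Pick any convergent subsequence $\sigma^{t_{n_k}}(x) \to y$ in the compact space $X$. Continuity of $\pi$ together with $t_{n_k} g \to z$ yields $\pi(y) = \pi(x) + z$; because $\pi(x) + z \notin D$, every element of $\pi^{-1}(\pi(x) + z)$ shares the same $0$-th coordinate by Definition~\ref{definition of D}, namely $f^{can}(\pi(x) + z)$. Hence $y_0 = f^{can}(\pi(x) + z)$ and $x_{t_{n_k}} = (\sigma^{t_{n_k}} x)_0 \to y_0$. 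Since every subsequence of $(x_{t_n})$ has a further subsequence with the same limit, the full sequence converges to this limit.

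Setting $L_x := f^{can}(\pi(x) + z)$, I would then use that $V_a$ and $V_b$ are closed and disjoint. If $L_x \in V_a$, then $x_{t_n}$ eventually lies outside $V_b$, so any choice function realised by $x$ must satisfy $\varphi(n) = a$ for all large enough $n$; the case $L_x \in V_b$ is symmetric. If $L_x \notin V_a \cup V_b$, the sequence $x_{t_n}$ eventually leaves $V_a \cup V_b$ altogether, so no choice function is realised by $x$ at all. In the non-trivial cases the realised choice function is unique (since $V_a \cap V_b = \emptyset$) and eventually constant; taking the union of contributions over all admissible $x$ embeds $\Sigma_{(t_n)}^{V_a,V_b}(D^c)$ into the countable set of eventually constant $\{a,b\}$-sequences.

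The main obstacle is the convergence lemma, whose only non-trivial ingredient is the identification $y_0 = f^{can}(\pi(x) + z)$; this is precisely where the hypothesis $\pi(x) \notin D - z$ enters, since without it the limit set of $(x_{t_n})$ could contain more than one point and the eventual-constancy argument would collapse. Once the convergence step is in place, the trichotomy argument based on closedness and disjointness of $V_a, V_b$ is immediate.
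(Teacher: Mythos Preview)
Your proof is correct and follows essentially the same approach as the paper: both arguments show that any choice function realised by $x$ with $\pi(x)+z\notin D$ is eventually constant, hence there are only countably many such functions. The only cosmetic difference is that the paper invokes continuity of $f^{can}$ on $D^c$ directly to find a neighbourhood $U$ of $\pi(x)+z$ on which $f^{can}$ avoids some $V_i$, whereas you route the same idea through the convergence lemma $x_{t_n}\to f^{can}(\pi(x)+z)$ via compactness of $X$; the paper's version is marginally shorter because it avoids the trichotomy (it simply picks one $i$ with $f^{can}(\pi(x)+z)\in V_i^c$, which always exists since $V_a\cap V_b=\emptyset$).
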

\begin{proof} 
Let $f$ be the canonical semi-cocycle and let $F$ be its associated section function.
Suppose $x$ realises the choice function $\varphi$.
All limit points of $(\sigma^{t_n}(x))$ belong to the fibre of $\pi(x)+z$ which we assume to be a point of continuity of $f$.  
Since $V_a\cap V_b=\emptyset$, there exists $i\in \{a,b\}$ such that $f(\pi(x)+z) \in V_i^c$. 
As $V_i^c$ is open, we can apply Lemma~\ref{lem-new} to $W=V_i^c$ to guarantee that there exists
a neighbourhood $U\subset \mc Z$ of $0$ such that $F(w+\pi(x)+z)\subset V_i^c$ for all $w\in U$. 
Therefore,  
if $n$ is large enough such that $t_n g-z\in U$, we have $x_{t_n}=(\sigma^{t_n}(x))_0\in F(\pi(x)+t_n)\subset V_i^c$. Hence for all large $n$, $x_{t_n} \not\in V_i$.
This means that for all large $n$, $\varphi(n)$ is constant.
The set of eventually constant choice functions is countable.
\end{proof}
As a matter of fact, we can improve Proposition~\ref{prop: continuity points don't matter}. 
To that end, we first observe
\begin{lem}\label{lem-finite}
Let $(X,\sigma)\subset (K^\Z,\sigma)$ be an almost automorphic shift and let $V_a,V_b\subset K$ be  closed nonempty disjoint subsets. 
Let $(t_n)_{n\in\N}$ be a sequence in $\Z$ with $t_n g\to z$. 
Let $w\in \mc Z$. 
If $\{t_n g+w\:n\in \N\}\cap D$ is finite, then $\Sigma_{(t_n)}^{V_a,V_b}(\{z+w\})$ is finite.
\end{lem}
\begin{proof}
By definition, $\Sigma_{(t_n)}^{V_a,V_b}(\{z+w\})$ is the set of choice functions which are realised along $(t_n)$ by points in the fibre $\pi^{-1}(w)$. If $t_ng+w\in D^c$ then all points in $ \pi^{-1}(w)$ have the same value at $t_n$. 
Since only finitely many $w+t_ng$ are discontinuity points, we can thus realise only finitely many choices through points of $\pi^{-1}(w)$.
\end{proof}

Recall that the \emph{derived set} of 
a subset of a metrizable space is the set of all its accumulation points.
We have the following strengthening of Proposition~\ref{prop: continuity points don't matter}.
\begin{prop}\label{prop: only derived set of discontinuities matters}
Let $(X,\sigma)\subset (K^\Z,\sigma)$ be an almost automorphic shift and let $ V_a,V_b\subset K$ be closed nonempty disjoint subsets. 
Let $(t_n)_{n\in\N}\subset \Z$ be a sequence
with $t_n g \to z$, and let  $D'$ denote the derived set of $D$.
Then $\Sigma_{(t_n)}^{V_a,V_b}({D'}^c)$ is at most countable.
\end{prop}
\begin{proof}
The answer to the question of whether $\Sigma_{(t_n)}^{V_a,V_b}({D'}^c)$ is at most countable or not does not change if we restrict to the subsequence of elements of $(t_n)_{n\in\N}$ which are pairwise distinct. We may therefore assume this 
without loss of generality.
Let $w\in \mc Z$ be such that 
$z+w\in D\setminus {D'}$, so that $z+w$ is an isolated point of $D$. Hence $t_n g +w\notin D$ for sufficiently large $n$, that is, $\{t_n g+w\:n\in \N\}\cap D$ is finite. 
 Moreover, since $X$ is compact, the set $D\setminus D'$ is at most countable.
 By Lemma~\ref{lem-finite} $\Sigma_{(t_n)}^{V_a,V_b}(D\setminus {D'})$ is at most countable.
 The result follows with Proposition~\ref{prop: continuity points don't matter}.
\end{proof}

In a similar way, we obtain the following useful criterion for forward tameness.
For tameness of the full $\Z$-action it has to be tested for the forward and the backward motion.  
\begin{cor}[{\cite[Lemma~3.2]{FuhrmannKwietniak2020}}]
\label{cor: sufficient criterion for tameness}
Let $(X,\sigma)$ be an almost automorphic shift with maximal equicontinuous factor $(\mc Z,+g)$. 
 Suppose that the set discontinuity points $D\subset \mc Z$  is countable.
 If $z+ \N g \cap D$ is finite for each $z\in \mc Z$, then $(X,\sigma)$ is forward tame.
\end{cor}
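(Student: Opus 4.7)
The plan is to argue by contradiction. Assume $(X,\sigma)$ is not forward tame, so there exist disjoint nonempty closed subsets $V_a,V_b\subset K$ and an independence sequence $(t_n)_{n\in\N}\subset\N$ for the pair $(V_a,V_b)$. Because the terms of $(t_n)$ are pairwise distinct, $(t_n)$ is unbounded in $\N$; thus, after passing to a subsequence (still an independence sequence, since any subset of an independence set is an independence set) I may assume $t_n\to\infty$ in $\N$ and $t_n g\to z$ in $\mc Z$ for some $z\in\mc Z$.

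By the definition of an independence sequence, every $\varphi\in\{a,b\}^\N$ is realised, so
\[
\{a,b\}^\N \;=\; \Sigma_{(t_n)}^{V_a,V_b}(D^c)\,\cup\,\Sigma_{(t_n)}^{V_a,V_b}(D).
\]
Proposition~\ref{prop: continuity points don't matter} already tells us that the first piece is at most countable. It therefore suffices to show that $\Sigma_{(t_n)}^{V_a,V_b}(D)$ is also countable, contradicting $|\{a,b\}^\N|=2^{\aleph_0}$.

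For each $w\in D-z$, let $S_w$ denote the set of choice functions realised by some $x\in\pi^{-1}(w)$, so that $\Sigma_{(t_n)}^{V_a,V_b}(D)=\bigcup_{w\in D-z} S_w$. Since $D$ is countable, so is $D-z$, and it is enough to show that each $S_w$ is finite. By hypothesis $(w+\N g)\cap D$ is finite, which together with $t_n\to\infty$ gives some $N(w)\in\N$ such that $w+t_n g\notin D$ for every $n\geq N(w)$. For any such $n$, the definition of $D$ implies that all elements of $\pi^{-1}(w+t_n g)$ share the same $0$-th coordinate; hence $x_{t_n}=(\sigma^{t_n}x)_0$ depends only on $w$, not on the particular $x\in\pi^{-1}(w)$. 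Since $V_a\cap V_b=\emptyset$, this forces $\varphi(n)$ to be determined by $w$ for each $n\geq N(w)$, leaving only the finite initial segment $(\varphi(0),\dots,\varphi(N(w)-1))$ free, so $|S_w|\leq 2^{N(w)}<\infty$.

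I do not foresee a major obstacle. The argument is essentially a careful count: Proposition~\ref{prop: continuity points don't matter} supplies the clean input on the continuity-point case, while the hypothesis on the finiteness of $w+\N g\cap D$ yields the tail determination needed to control the discontinuity-point case. The one subtlety worth flagging is the reduction to a subsequence with $t_n\to\infty$, which is justified by the distinctness of the terms of an independence sequence together with the fact that subsets of independence sets remain independence sets.
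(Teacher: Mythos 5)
Your argument is correct, and it follows the paper's overall strategy (contradiction by counting the choice functions realisable along a convergent subsequence of a putative independence sequence), but you push it one step further than the paper's own proof does. The paper simply discards the finitely many $t_n$ with $z+t_ng\in D$ and then invokes Proposition~\ref{prop: continuity points don't matter}; it does not explicitly discuss choice functions realised by points lying over $D-z$, and the countability of $D$ never visibly enters. You instead split $\{a,b\}^\N=\Sigma_{(t_n)}^{V_a,V_b}(D^c)\cup\Sigma_{(t_n)}^{V_a,V_b}(D)$, dispose of the first piece by Proposition~\ref{prop: continuity points don't matter}, and handle the second fibre by fibre: your finiteness argument for $S_w$ (for $n\geq N(w)$ the point $w+t_ng$ is not a discontinuity point, so all elements of $\pi^{-1}(w+t_ng)$ agree at coordinate $0$ and $x_{t_n}$ is determined by $w$) is precisely the content of Lemma~\ref{lem-finite}, which the paper states immediately after the corollary for a different purpose; combining it with the countability of $D$ is what makes the count of $\Sigma_{(t_n)}(D)$ go through. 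So your route is essentially the paper's argument made self-contained, and the extra fibre-wise step is exactly where both hypotheses of the corollary (countability of $D$ and finiteness of $(w+\N g)\cap D$ for every $w$, not merely for the limit point $z$) are used. Two minor points, neither a gap: the passage from ``$(w+\N g)\cap D$ finite'' to ``$w+t_ng\notin D$ for all large $n$'' tacitly uses that $t\mapsto w+tg$ is injective, i.e.\ that $\mc Z$ is infinite (the finite case being trivially tame); the paper makes the same silent reduction. And at the step ``$\varphi(n)$ is determined by $w$'' one should allow that the common value of $x_{t_n}$ lies in neither $V_a$ nor $V_b$, in which case $S_w=\emptyset$; your bound $|S_w|\leq 2^{N(w)}$ holds either way.
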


\begin{prop}\label{prop: co-countable implies all sequences}
Let $(X,\sigma)\subset (K^\Z,\sigma)$ be an almost automorphic shift and $ V_a,V_b\subset K$ be closed nonempty disjoint subsets. 
Let $(t_n)_{n\in\N}$ be a sequence in $\Z$ with $t_n g\to z$. 
Given $E\subset \mc Z$ we have 
$\overline{\Sigma_{(t_n)}^{V_a,V_b}(E)}\subset \Sigma_{(t_n)}^{V_a,V_b}(\overline E)$.
\end{prop}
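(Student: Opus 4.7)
The plan is to take a limit of realising points and verify that the limit point witnesses the limiting choice function. The only subtlety is tracking how the hypothesis $\pi(x^{(k)}) \in E - z$ survives passage to the limit and how the closedness of $V_a, V_b$ upgrades finite-level agreement to actual membership.

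Concretely, I would start with $\varphi \in \overline{\Sigma_{(t_n)}^{V_a,V_b}(E)}$ and choose a sequence $\varphi^{(k)} \in \Sigma_{(t_n)}^{V_a,V_b}(E)$ converging to $\varphi$ in $\{a,b\}^\N$. By definition of this convergence (product topology), for each fixed $n$ we have $\varphi^{(k)}(n) = \varphi(n)$ for all sufficiently large $k$. For each $k$, pick a witness $x^{(k)} \in X$ with $x^{(k)}_{t_n} \in V_{\varphi^{(k)}(n)}$ for all $n$ and with $\pi(x^{(k)}) \in E - z$.

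Next, invoke compactness of $X$ to extract a subsequence (which I will index again by $k$) with $x^{(k)} \to x^* \in X$. Since $\pi$ is continuous and the translation $w \mapsto w - z$ is a homeomorphism of $\mc Z$, we get
\[
\pi(x^*) = \lim_k \pi(x^{(k)}) \in \overline{E - z} = \overline{E} - z.
\]
This handles the fibre condition required for membership in $\Sigma_{(t_n)}^{V_a,V_b}(\overline{E})$.

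Finally, I fix $n \in \N$ and verify $x^*_{t_n} \in V_{\varphi(n)}$. Since $X$ carries the subspace topology from $K^\Z$, the evaluation $y \mapsto y_{t_n}$ is continuous, so $x^{(k)}_{t_n} \to x^*_{t_n}$ in $K$. For all sufficiently large $k$ we have both $\varphi^{(k)}(n) = \varphi(n)$ and $x^{(k)}_{t_n} \in V_{\varphi^{(k)}(n)} = V_{\varphi(n)}$; using that $V_{\varphi(n)}$ is closed, the limit $x^*_{t_n}$ also lies in $V_{\varphi(n)}$. Hence $\varphi$ is realised along $(t_n)$ by $x^* \in X$ with $\pi(x^*) \in \overline{E} - z$, giving the inclusion. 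There is no real obstacle here — the whole argument is a routine closure/compactness diagonalisation, and the statement is essentially the observation that the ``realisability'' relation is closed in the product of $\{a,b\}^\N$ and $X$.
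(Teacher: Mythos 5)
Your proposal is correct and follows essentially the same route as the paper's proof: pass to a convergent subsequence of witnesses by compactness of $X$, use that for each fixed $n$ the values $\varphi^{(k)}(n)$ stabilise, and conclude via pointwise convergence together with closedness (equivalently compactness) of $V_a,V_b$; the only difference is that you spell out the fibre condition $\pi(x^*)\in\overline{E}-z$ explicitly, which the paper leaves implicit. No gaps.
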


\begin{proof} 
Let $\varphi\in \{a,b\}^\N$ be the limit of the sequence $(\varphi\ho{m})_{m}\subset \Sigma_{(t_n)}^{V_a,V_b}(E)$. 
For each $m$ choose $x\ho{m}\in E-z$ which realises $\varphi\ho{m}$. 
Let $x$ be a limit of some subsequence of $(x\ho{m})_m$. We claim that $x$ realises $\varphi$. Indeed, given $n$ there exists $m_n$ such that $\varphi\ho{m}(n)=\varphi(n)$ for $m\geq m_n$ and hence $x\ho{m}_{t_n}\in V_{\varphi(n)}$ for $m\geq m_n$.  
Since the convergence of $x\ho{m}$ to $x$ is pointwise and $V_a$ and $V_b$ are compact, this
implies that $x_{t_n}\in V_{\varphi(n)}$ for each $n\in \N$.
\end{proof}
The above results lead to the following necessary condition for non-tameness. 
\begin{cor}\label{cor: realization only needs Df}
Let $(X,\sigma)\subset (K^\Z,\sigma)$ be an almost automorphic shift and $ V_a,V_b\subset K$ be closed nonempty disjoint subsets.
If $(t_n)_{n\in\N}$ is an independence sequence for $(V_a,V_b)$ with $t_n g\to z$, then $\Sigma_{(t_n)}^{V_a,V_b}(D)=\Sigma_{(t_n)}^{V_a,V_b}(D')=\{a,b\}^\N$.
\end{cor}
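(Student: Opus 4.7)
Abbreviate $\Sigma := \Sigma_{(t_n)}^{V_a,V_b}$. The assumption that $(t_n)$ is an independence sequence for $(V_a,V_b)$ is exactly $\Sigma(\mc Z) = \{a,b\}^\N$. My strategy is to combine (i) the countability bounds of Propositions~\ref{prop: continuity points don't matter} and~\ref{prop: only derived set of discontinuities matters}, (ii) the semicontinuity statement of Proposition~\ref{prop: co-countable implies all sequences}, and (iii) the Cantor-space topology of $\{a,b\}^\N$, in which any co-countable subset is dense.

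First I show $\Sigma(D') = \{a,b\}^\N$. Since every realiser of a choice function has its $\pi$-image in either $D' - z$ or $(D')^c - z$, monotonicity gives $\Sigma(\mc Z) = \Sigma(D') \cup \Sigma((D')^c)$. Proposition~\ref{prop: only derived set of discontinuities matters} forces $\Sigma((D')^c)$ to be countable, so $\Sigma(D')$ is co-countable in the perfect Polish space $\{a,b\}^\N$, hence dense. The derived set $D'$ is closed in the metric space $\mc Z$, so Proposition~\ref{prop: co-countable implies all sequences} applied with $E = D'$ yields $\overline{\Sigma(D')} \subseteq \Sigma(\overline{D'}) = \Sigma(D')$; i.e.\ $\Sigma(D')$ is closed. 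A closed dense subset of $\{a,b\}^\N$ equals $\{a,b\}^\N$, and the first equality follows.

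For $\Sigma(D) = \{a,b\}^\N$, I note that in our symbolic setting $D$ is itself closed: if $z_n \in D$ and $z_n \to z$, choose $x_n, y_n \in \pi^{-1}(z_n)$ with $(x_n)_0 \neq (y_n)_0$; the finiteness of $K$ lets us pass to subsequences on which both $0$-th coordinate sequences are constant and distinct, so any accumulation points $x, y \in \pi^{-1}(z)$ inherit these distinct values and $z \in D$. Consequently $D \supseteq D'$, and monotonicity of $\Sigma$ gives $\{a,b\}^\N = \Sigma(D') \subseteq \Sigma(D) \subseteq \{a,b\}^\N$.

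The delicate step is the upgrade from co-countability to equality. Propositions~\ref{prop: continuity points don't matter} and~\ref{prop: only derived set of discontinuities matters} alone only show that the complements of $\Sigma(D)$ and $\Sigma(D')$ in $\{a,b\}^\N$ are countable; closing the gap depends on the closedness of $D'$ (automatic) and of $D$ (requiring the finite-alphabet structure) to make the corresponding $\Sigma$-sets closed via Proposition~\ref{prop: co-countable implies all sequences}, so that a dense closed subset of the Cantor space $\{a,b\}^\N$ must be everything.
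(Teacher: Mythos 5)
Your argument is correct in substance and follows the paper's own proof almost verbatim: both rest on the countability bounds of Propositions~\ref{prop: continuity points don't matter} and~\ref{prop: only derived set of discontinuities matters} (making $\Sigma_{(t_n)}^{V_a,V_b}(D)$ and $\Sigma_{(t_n)}^{V_a,V_b}(D')$ co-countable, hence dense in the Cantor space $\{a,b\}^\N$) combined with Proposition~\ref{prop: co-countable implies all sequences} applied to a closed set, so that a dense closed subset of $\{a,b\}^\N$ is everything. The only structural difference is minor: for $\Sigma_{(t_n)}^{V_a,V_b}(D)$ the paper runs the density-plus-closedness argument directly with $E=D$, while you deduce it from $\Sigma_{(t_n)}^{V_a,V_b}(D')=\{a,b\}^\N$ via $D'\subset D$ and monotonicity; both variants hinge on exactly the same fact, namely that $D$ is closed. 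The caveat worth flagging is your justification of that fact: you invoke finiteness of $K$ to pass to subsequences with constant, distinct $0$-coordinates, whereas the corollary --- like the whole section --- is stated for an arbitrary compact metric alphabet $K$ and compact disjoint $V_a,V_b\subset K$. For infinite $K$ this subsequence argument breaks down (the two $0$-coordinates may merge in the limit), and indeed $D$ need not be closed in general: a semicocycle with jump discontinuities whose jump sizes shrink to zero along a sequence accumulating at a continuity point produces a non-closed $D$. The paper simply asserts that $D$ and $D'$ are closed, so on this point you are, if anything, more explicit about what is needed; but as written your proof of the $\Sigma_{(t_n)}^{V_a,V_b}(D)$ half covers only finite alphabets (which is all the paper actually uses, e.g.\ in Theorem~\ref{thm-reduction}), not the full generality in which the corollary is stated. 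The $\Sigma_{(t_n)}^{V_a,V_b}(D')$ half is complete as you wrote it, since the derived set is closed in any metric space.
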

\begin{proof} As $(t_n)_n$ is an independence sequence its elements are pairwise distinct and so we may apply 
Propositions~\ref{prop: continuity points don't matter}, \ref{prop: only derived set of discontinuities matters} 
to see that $\Sigma_{(t_n)}(D)$ and $\Sigma_{(t_n)}(D')$ are dense in $\{a,b\}^\N$. Since $D$ and $D'$ are closed the result follows from Proposition~\ref{prop: co-countable implies all sequences}.
\end{proof}

\begin{lem}\label{lem: existence 0-independence set}
Let $(X,\sigma)\subset (K^\Z,\sigma)$ be an almost automorphic shift and $ V_a,V_b\subset K$ be closed nonempty disjoint subsets.
Let  $\tilde V_a$, $\tilde V_b$ be closed disjoint subsets of $K$ such that $V_i \subset \inte \tilde V_i$.
Let $(t_n)_{n\in\N}\subset \Z$  be such that $t_n g\to z$. 
Let $E\subset \mc Z$ be compact with $(E-z+t_n g)\subset {D}^c$ for all $n$.  
Then there is a sequence $(\tau_n)_{n\in\N}\subset\Z$ with
$\tau_n g\to 0$ and $\Sigma_{(\tau_n)}^{\tilde V_a,\tilde V_b}(E)\supseteq \Sigma_{(t_n)}^{V_a,V_b}(E)$. Moreover, $(\tau_n)_{n\in \N}$ can be chosen strictly negative and it can also be chosen strictly positive.
\end{lem}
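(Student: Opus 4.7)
The plan is to define $\tau_n:=t_n-m_n$ for a carefully chosen sequence of integers $m_n$ with $m_n g\to z$ in $\mc Z$; then $\tau_n g=(t_n g-z)+(z-m_n g)\to 0$ automatically. Intuitively, translating a realizing orbit $x$ (with $\pi(x)\in E-z$) by $m_n$ nearly moves its projection into $E$, and the closeness required can be controlled uniformly in the $\varphi$-indexed choice of $x$ thanks to compactness of $E$.

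The key tool is the canonical semicocycle $f^{\mathrm{can}}\colon\mc Z\setminus D\to K$ of Definition~\ref{definition of D}, which is continuous on $\mc Z\setminus D\supseteq\bar D^c$ (and $\bar D^c$ is open). Since $V_i\subset\mathrm{int}\,\tilde V_i$ for $i=a,b$, fix $\eta>0$ such that the $\eta$-neighbourhood of $V_i$ in $K$ lies in $\tilde V_i$. The compact set $E-z+t_n g$ is contained in $\bar D^c$, so a small enough closed neighbourhood of it is still compact and contained in $\bar D^c$; uniform continuity of $f^{\mathrm{can}}$ on this neighbourhood yields $\delta_n\in(0,1/n]$ with the following property: whenever $d(w,\,e-z+t_n g)<\delta_n$ for some $e\in E$, one has $w\in\bar D^c$ and the distance between $f^{\mathrm{can}}(w)$ and $f^{\mathrm{can}}(e-z+t_n g)$ is less than $\eta$. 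Crucially, $\delta_n$ depends on $n$ and $E$ only, not on $\varphi$ or on any realizer.

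By minimality of $(\mc Z,+g)$, both $\N g$ and $-\N g$ are dense in $\mc Z$, so we can select $m_n\in\Z$ with $d(m_n g,z)<\delta_n$. Setting $\tau_n:=t_n-m_n$ yields $\tau_n g\to 0$. To enforce $\tau_n>0$ we additionally take $m_n<t_n$ by choosing $m_n$ arbitrarily negative; to enforce $\tau_n<0$ we take $m_n>t_n$ by choosing $m_n$ arbitrarily large. Now given $\varphi\in\Sigma_{(t_n)}^{V_a,V_b}(E)$, fix a realizer $x\in X$ with $\pi(x)\in E-z$ and $x_{t_n}\in V_{\varphi(n)}$; put $w^*:=\pi(x)+z\in E$ and pick any $x'\in\pi^{-1}(w^*)$. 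One has
\[
\pi(x')+\tau_n g=\bigl(\pi(x)+t_n g\bigr)+\bigl(z-m_n g\bigr),
\]
with the first summand in $E-z+t_n g\subset\bar D^c$ and the second at distance below $\delta_n$. Hence $\pi(x')+\tau_n g\in\bar D^c\subset D^c$, and therefore $x'_{\tau_n}=f^{\mathrm{can}}(\pi(x')+\tau_n g)$; by the choice of $\delta_n$, this value is within $\eta$ of $f^{\mathrm{can}}(\pi(x)+t_n g)=x_{t_n}\in V_{\varphi(n)}$, and hence lies in $\tilde V_{\varphi(n)}$. Thus $\varphi\in\Sigma_{(\tau_n)}^{\tilde V_a,\tilde V_b}(E)$.

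The main point to watch is that a single sequence $(\tau_n)$ must work uniformly over all $\varphi$ and all corresponding realizers. The naive attempt of building $x'$ as a limit of translates $\sigma^{m_n}(x)$ fails because one would then need limits of coordinate values at the diverging indices $\tau_n$, which pointwise convergence does not control. The semicocycle viewpoint bypasses this obstacle: once $\pi(x')+\tau_n g$ is arranged to lie in $\bar D^c$, the coordinate $x'_{\tau_n}$ is fixed by $\pi(x')$, so any fibre element $x'\in\pi^{-1}(w^*)$ suffices and the whole construction reduces to the uniform-continuity estimate captured by $\delta_n$.
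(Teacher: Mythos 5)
Your proof is correct and follows essentially the same route as the paper's: you replace $t_n$ by $\tau_n=t_n-m_n$ with $m_ng\to z$ (the paper's $t'_n$ with $t'_ng\in -U_n+z$), use compactness of $E-z+t_ng$ inside $\bar{D}^c$ together with continuity of the canonical semicocycle to get a guarantee, uniform in $\varphi$ and the realiser, that every point of the fibre over $\pi(x)+z\in E$ lands in $\tilde V_{\varphi(n)}$ at time $\tau_n$, and you obtain the sign conditions by taking $m_n$ far negative or far positive exactly as the paper does with $t'_n$. The only cosmetic difference is that you express the uniformity via a (translation-invariant) metric, uniform continuity and an $\eta$-fattening of $V_i$ inside $\tilde V_i$, whereas the paper phrases the same compactness argument with neighbourhoods $U_n$ of $0\in\mc Z$ satisfying $U_n+\bigl((E-z+t_ng)\cap f^{-1}(V_i)\bigr)\subset f^{-1}(\mathrm{int}\,\tilde V_i)$.
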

\begin{proof} 
Let $f:D^c\to K$ be the canonical semicocycle and $F:\mc Z\to 2^K$ its associated section function. We apply Lemma~\ref{lem-new} to $ \inte \tilde V_i$ to obtain that, given $w\in D^c$ such that $f(w)\in  \mathrm{int} \tilde V_i$ there exists a neighbourhood $U$ of $0\in\mc Z$ such that for all $z'\in U+w$ we have $F(z')\subset  \mathrm{int} \tilde V_i$.
Let $A\subset D^c$ be a compact subset. As $f$ is continuous on $D^c$, also $A_i:=f^{-1}(V_i)\cap A$ is compact for $i=a,b$. This implies that we can choose the neighbourhood $U$ of $0\in\mc Z$ uniformly for all $w\in A$ so that, for all $z'\in U+A_i$ and  $i=a,b$, we have $F(z')\subset \mathrm{int} \tilde V_i$.

Applying the above to $A= E-z+t_n g$, with $n\in\N$, we conclude  that there exists a neighbourhood $U_n$ of $0\in\mc Z$ such that 
\begin{equation}\label{eq-F1}
F(z')\subset \tilde V_{i}\quad \mbox{\rm for all}\; z'\in U_n + \big((E-z+t_n g)\cap f^{-1}(V_i)\big).
\end{equation}
As the dynamics on $\mc Z$ is forward minimal there exist $t'_n\in \Z^+$ such that 
$t'_{n}g\in -U_n+z$.
Define $\tau_n = t_n-t'_{n}$. Without loss of generality we may assume that $\limsup_n U_n = \{0\}$ 
so that $t'_n g\to z$. Hence  $\tau_n g$ tends to $0\in\mc Z$ as $n\to +\infty$.

Let $\varphi \in \Sigma_{(t_n)}^{V_a,V_b}(E)$. 
There is $x\in \pi^{-1}(E-z)$ realising $\varphi$ along $(t_n)$. 
 Let $w = \pi(x) +z$. As $f(w-z+t_ng) = x_{t_n}\in V_{\varphi(n)}$ we have
 \begin{equation}\label{eq-F2}
F(z')\subset \tilde V_{\varphi(n)}\quad \mbox{\rm for all}\; z'\in U_n +w-z+t_ng
\end{equation}
with $U_n$ as in (\ref{eq-F1}).

Let  $y\in\pi^{-1}(w)$. Then $y_{\tau_n} = (\sigma^{\tau_n}(y))_0 \in F(w+\tau_n g)$. 
We have 
$$w+\tau_n g = w-t'_n g+t_n g \in U_n+w-z+t_n g$$  
which implies---by (\ref{eq-F2})---that 
$y_{\tau_n}\in F(w+\tau_ng)\subset  \tilde V_{\varphi(n)}$. 
We thus have shown that, for all $n$, $y_{\tau_n}\in \tilde V_{\varphi(n)}$. This means that $\varphi$ is realised by $y$ along $(\tau_n)$ for $\tilde V_a,\tilde V_b$, or, in other words $\varphi\in \Sigma_{(\tau_n)}^{\tilde V_a,\tilde V_b}(E)$.

For the last part, note that we can choose $t'_n$ to rise fast enough such that $t_n-t'_n\to -\infty$. Or we can use backward minimality and  choose $t'_n\in \Z^-$ to fall fast enough so  that $t_n-t'_n\to +\infty$.
\end{proof}

For later reference, we apply the above results to binary shifts, that is $K=\{a,b\}$.
Note that in this case, we are bound to choose $V_a=\tilde V_a=\{a\}$ and $V_b=\tilde V_b=\{b\}$ in Lemma~\ref{lem: existence 0-independence set}.

\begin{thm}\label{thm-reduction}
Let $(X,\sigma)\subset (\{a,b\}^\Z,\sigma)$ be a non-tame almost automorphic shift with maximal equicontinuous factor $(\mc Z,+g)$. 
\begin{enumerate}
\item There is an independence set $(t_n)$ for the pair $\{a\},\{b\}$ with $t_n g\to z$ such that all choice functions are realised along $(t_n)$ by elements of $D'-z$.
\item If for each $z\in \mc Z$ there are only finitely many $t\in\Z$ such that $(D-z+t g)\cap D \neq \emptyset$, then there exists an independence set $(t_n)$ with $t_ng\to 0$
such that all choice functions are realised along $(t_n)$ by elements of $D$. Moreover, $(t_n)$ can be chosen strictly negative and it can also be chosen strictly positive.
\end{enumerate}
\end{thm}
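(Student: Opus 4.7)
For (1), I would proceed directly. Since $(X, \sigma)$ is a binary shift and forward non-tame, Proposition~\ref{prop: independence implies non-tame} together with Remark~\ref{rem: tameness of shifts with finite alphabet} yields an infinite independence sequence $(s_n)_{n \in \N} \subset \N$ for the pair $(\{a\}, \{b\})$. Compactness of $\mc Z$ lets me pass to a subsequence so that $s_n g \to z$ for some $z \in \mc Z$. Corollary~\ref{cor: realization only needs Df} then gives
\[
\Sigma_{(s_n)}^{\{a\},\{b\}}(D') = \{a,b\}^{\N},
\]
which is precisely the conclusion of (1); the backward case is identical after replacing $\N$ by $-\N$ throughout.

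For (2), I would start with $(s_n)$ from (1) and use the hypothesis of (2) applied at $z = \lim s_n g$: the set $\{t \in \Z : (D-z+tg) \cap D \neq \emptyset\}$ is finite, so after discarding finitely many bad indices and relabeling, I may assume $(D - z + s_n g) \cap D = \emptyset$ for every $n \in \N$. The key step is then to invoke Lemma~\ref{lem: existence 0-independence set} with $V_a = \tilde V_a = \{a\}$ and $V_b = \tilde V_b = \{b\}$, and a suitable compact $E \subseteq \bar D$ chosen so that both $(E - z + s_n g) \subset \bar D^c$ for all $n$ and $E$ still captures the realizations from~(1). Since $\bar D \setminus D \subseteq D'$, I would apply the hypothesis at the relevant auxiliary base points (of the form $z + d - d'$ with $d' \in \bar D \setminus D$) to promote the "$\cap D = \emptyset$" condition to the required "$\cap \bar D = \emptyset$" one along a further subsequence. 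The lemma would then produce a sequence $(\tau_n)$ with $\tau_n g \to 0$ and $\Sigma_{(\tau_n)}(E) \supseteq \Sigma_{(s_n)}(E) \supseteq \{a,b\}^{\N}$, giving an independence sequence whose realizations have $\pi$-image in $D$.

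The strict positivity (respectively negativity) of $(\tau_n)$ is inherited directly from the corresponding flexibility in Lemma~\ref{lem: existence 0-independence set}: the auxiliary return times $t'_n$ used to define $\tau_n = s_n - t'_n$ can, by minimality of the forward (or backward) motion on $\mc Z$, be chosen to grow fast enough to force $\tau_n$ into either $\N$ or $-\N$. The main obstacle I anticipate is precisely the reconciliation of $D$ with $\bar D$: the hypothesis of (2) only directly controls intersections of $D$ with its $\Z g$-translates, while Lemma~\ref{lem: existence 0-independence set} needs the analogous statement for $\bar D$. Bridging this gap, either by exploiting the hypothesis at the finitely many extra base points corresponding to $\bar D \setminus D \subseteq D'$, or by arguing that the hypothesis is strong enough to force $D = \bar D$, is where the bulk of the technical work will lie.
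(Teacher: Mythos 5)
Your proposal follows essentially the same route as the paper. Part (1) is exactly Corollary~\ref{cor: realization only needs Df} applied to the only available pair $\{a\},\{b\}$ after passing to a convergent subsequence (this is the right citation: Proposition~\ref{prop: only derived set of discontinuities matters} alone gives density of the realised choice functions, and Proposition~\ref{prop: co-countable implies all sequences} is what upgrades this to all of $\{a,b\}^\N$, which is what the corollary packages). Part (2) is also the paper's argument: discard the finitely many indices with $(D-z+s_ng)\cap D\neq\emptyset$ (your reading with ``$=\emptyset$'' is indeed the intended one), apply Lemma~\ref{lem: existence 0-independence set} with $V_a=\tilde V_a=\{a\}$, $V_b=\tilde V_b=\{b\}$ (legitimate since these are clopen in the finite alphabet), and obtain the sign flexibility of $(\tau_n)$ from the last part of that lemma.

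The one place you stop short --- the reconciliation of $D$ with $\bar D$, which you expect to carry the bulk of the technical work --- is not actually an obstacle, and neither of your two proposed fixes is the right one. For a shift over a \emph{finite} alphabet, the set $D$ of Definition~\ref{definition of D} is automatically closed: if $z^k\to z$ with $x^k,y^k\in\pi^{-1}(z^k)$ and $x^k_0=a$, $y^k_0=b$, compactness of $X$ gives limit points $x,y\in\pi^{-1}(z)$, and since the cylinders $[a],[b]$ are clopen we retain $x_0=a\neq b=y_0$, so $z\in D$. Hence $\bar D=D$ for every binary almost automorphic shift, independently of the hypothesis of (2) (so that hypothesis cannot and need not ``force'' it), and no auxiliary base points of the form $z+d-d'$ are required; this is precisely what the paper's aside ``as $K$ is finite, all its subsets are clopen'' is doing. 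One then simply takes $E=D$ in Lemma~\ref{lem: existence 0-independence set}: the cleaned-up subsequence satisfies $(E-z+s_ng)\subset \bar D^c=D^c$, and since Corollary~\ref{cor: realization only needs Df} gives $\Sigma_{(s_n)}^{\{a\},\{b\}}(D)=\{a,b\}^\N$ (no need to route through $D'$ here), the lemma yields $\Sigma_{(\tau_n)}^{\{a\},\{b\}}(D)\supseteq\Sigma_{(s_n)}^{\{a\},\{b\}}(D)=\{a,b\}^\N$ with $\tau_ng\to 0$, which is statement (2).
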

\begin{proof} 
The first statement is Prop.~\ref{prop: only derived set of discontinuities matters} applied to the only possible pair of non-empty disjoint subsets of $\{a,b\}$. As for the second, let $(t_n)\subset \N$ be an independence set such that $t_n g\to z$.  
By Corollary~\ref{cor: realization only needs Df} all choice functions are realised along $(t_n)$ by points of $\pi^{-1}(D-z)$. Taking a subsequence of $(t_n)\subset \N$ we can suppose that $(D-z+t_n g)\cap D\neq \emptyset$ for all $n$. As $K$ is finite, all its subsets are clopen and so we may apply Lemma~\ref{lem: existence 0-independence set} to find an independence set $(\tau_n)$ with $\tau_n g\to 0$ 
such that all choice functions are realised along $(\tau_n)$ by elements of $D$.
We may choose $(\tau_n)$ strictly negative or strictly positive.
\end{proof}

\section{A tame Toeplitz shift with uncountably many singular fibres}\label{sec: a tame toeplitz shift with uncountably many singular fibres}
In this section we construct an almost automorphic tame extension of the odometer 
$\mc Z:=\Z_{(4^n)}$ by means of a semicocycle  whose discontinuity set $D$ is uncountable. 
Note that  Theorem \ref{Toeplitz finite rank non-tame} implies that the Toeplitz shift we construct here does not have finite Toeplitz  rank.

\subsection{Set of discontinuities $D$ and its properties}

Given $z\in\mc Z=\Z_{(4^n)}$, we denote by $\head_i(z) = z_i \ldots z_1$ the \emph{head} of length $i$. 
We use the \emph{common head length} function
$$L(z,z') := \sup\{ i\in \N\: \head_i(z)=\head_i(z')\}$$
and further set $L(z,A) = \sup\{L(z,z')\: z'\in A\}$ for $A\subset \mc Z$.
Recall that the topology of $\Z_{(4^n)}$ has a base of clopen sets $U_w$ labelled by finite words $w=w_k\cdots w_1$, $w_i\in  \{0,\cdots, 4^i-1\}$, where
$$U_w = \{z\in \Z_{(4^n)} : \head_k(z)=w\}.$$
In the following, integers $t\in \Z$ are identified with their natural representation in $\mc Z$ and we write $z_n(t)$ for the $n$-th entry in that representation.

We recursively define a chain of subsets $D^i$ of $\mc Z$. 
The subset $D^i$ will contain $m_i:=2^{i}$ elements which are all of the form $z=\cdots z_2 z_1$ with $z_i = 3^{k_i}$ a power of $3$. 
We write $\bar{\cdot}$ to indicate infinite repetition to the left.
\begin{itemize}
\item[(0)] $D^0=\{\overline{3^0}\}$. 
\item[($\tilde 0$)] $\tilde D^0 = \{\overline{3^1} 3^0\}$. 
\item[(1)] $D^1 = D^0\cup \tilde D^0 = \{\overline{3^0},\overline{3^1} 3^0\}$. 
\item[(i+1)] Suppose that we have constructed $D^{i}$ and ordered its elements (in some fashion)
\[D^{i} = \{d^{(i,0)},\cdots,d^{(i,m_i-1)}\}.\]
For each $l\in\{0,\ldots,m_i-1\}$, we define ${\tilde d^{(i,l)}\in}\tilde D^{i}$ to be
\begin{align*}
\tilde d^{(i,l)} = \overline{3^{m_i+l}}\; \head_{m_i+l} d^{(i,l)}
\end{align*}
and $D^{i+1} := D^{i}\cup \tilde D^{i}$.
\end{itemize}

We may order $D^{i+1}$ in a way where we first take the elements of $D^{i}$ and then those of $\tilde D^{i}$.
For example, this gives 
\begin{align*}
{D^2} =&\{\overline{3^0},\overline{3^1}3^0,\overline{3^2}3^03^0,\overline{3^3}3^13^13^0\}
\intertext{and}
  {D^3} =&\{\overline{3^0},\,\,\overline{3^1}3^0,\,\,\overline{3^2}3^03^0,\,\,\overline{3^3}3^13^13^0\} \\
  &\cup
\{\overline{3^4}3^03^03^03^0,\,\, \overline{3^5}3^13^13^13^13^0,\,\,\overline{3^6}3^23^23^23^23^03^0,\,\,\overline{3^7}3^33^33^33^33^13^13^0 \}.
 \end{align*}
Define
\[
 D=\overline{\bigcup_{i\in \N} D^i}.
\]

We denote by 
$\mathrm{Head}_m:=\{\head_m(z) \: z\in D\}$, the heads of length $m$ of elements of $D$ and by
$q:\mathrm{Head}_{m+1}\to \mathrm{Head}_m$ the obvious surjective map given by shortening the head by one element. 
A word $w\in \mathrm{Head}_m$ is {\em (left)  special} if it does not have a unique preimage under $q$.
\begin{prop}\label{prop: essential properties of D}
The set $D$ satisfies the following properties:
\begin{enumerate}
\item[P1]\label{P1} 
Let $z, z'\in D$. 
If $z_n = z'_n$, then $\head_n(z) = \head_n(z')$.
\item[P2] \label{P2} 
Any $\Z$-orbit {in $\Z_{(4^n)}$} hits $D$ at most once. Moreover, no $d\in D$ satisfies $d_n=0$ or $d_n=4^n-1$ for any $n$.
\item[P3]\label{special} 
For each $m$, there is a unique special word $w_m \in \mathrm{Head}_m$.
\item[P4] $D$ is uncountable.
\end{enumerate}
\end{prop}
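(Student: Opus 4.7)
The plan is to exploit the recursive tree structure of the construction via an explicit enumeration of $D^\infty := \bigcup_i D^i$. Define $k\mapsto e^{(k)}$ from $\N_0$ to $D^\infty$ by $e^{(0)}=\overline{1}$ and $e^{(m_i+l)}=\tilde d^{(i,l)}$ for $i\ge 0$ and $0\le l<m_i$; this is a bijection since every positive integer has a unique decomposition $k=m_i+l$ with $m_i=2^i$ the largest power of $2$ that is at most $k$. Unfolding the recursion $\tilde d^{(i,l)}=\overline{3^{m_i+l}}\,\head_{m_i+l}(d^{(i,l)})$ yields the structural lemma at the heart of everything: for each $z\in D^\infty$ and each coordinate $n\ge 1$, one has $z_n=3^{k^\ast}$ for some $k^\ast\in\{0,1,\ldots,n-1\}$. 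Indeed, if $z=e^{(k)}$ with $k<n$ then $z_n=3^k$; and if $k\ge n$ then $z_n=e^{(l)}_n$ for the seed label $l=k-m_i<k$, which one iterates until the label drops below $n$.

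For P1, the key number-theoretic input is that the multiplicative order of $3$ in $(\Z/4^n\Z)^\times$ equals $4^{n-1}$, so the powers $3^0,\ldots,3^{n-1}$ are pairwise distinct modulo $4^n$. Combined with the structural lemma, this shows that on $D^\infty$ the coordinate $z_n$ uniquely determines $\head_n(z)$; the extension to $D=\overline{D^\infty}$ is automatic, since $\Z/4^n\Z$ is discrete, so the $n$-th coordinate of any sequence in $D^\infty$ converging to $z\in D$ is eventually equal to $z_n$. The ``in particular'' clause of P2 also follows: $3^{k^\ast}$ is coprime to $4$, hence nonzero modulo $4^n$, and $3^{k^\ast}\le 3^{n-1}<4^{n-1}<4^n-1$. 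For the orbit-disjointness in P2, I will argue by contradiction. Suppose $z'=z+t$ in $\Z_{(4^n)}$ for some nonzero $t\in\Z$; write $t$ in its mixed-radix expansion $t=a_1+\ell_1 a_2+\ell_1\ell_2 a_3+\cdots$ with $a_n\in\{0,\ldots,4^n-1\}$ and propagate upward. Since $|\mathrm{Head}_1|=1$, every $z\in D$ satisfies $z_1=1$, which forces $a_1=0$ and eliminates the carry into coordinate $2$. Inductively, the constraint that both $z_n$ and $(z+t)_n=z_n+a_n+c_{n-1}$ lie in the $n$-element set $\{3^0,\ldots,3^{n-1}\}$, together with P1 forcing the full heads of $z$ and $z+t$ below coordinate $n$ to agree, leaves only $a_n=0$ with $c_n=0$. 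Hence all $a_n$ vanish, so $t=0$.

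Property P3 follows from the count $|\mathrm{Head}_n|=n$: the heads $\head_n(e^{(0)}),\ldots,\head_n(e^{(n-1)})$ are pairwise distinct (their $n$-th entries being the distinct values $3^0,\ldots,3^{n-1}$) and exhaust $\mathrm{Head}_n$, since the structural lemma's recursion collapses any $\head_n(e^{(k)})$ with $k\ge n$ onto some $\head_n(e^{(k'')})$ with $k''<n$. Hence the map $q\colon\mathrm{Head}_{n+1}\to\mathrm{Head}_n$ between sets of sizes $n+1$ and $n$ has a unique fibre of cardinality at least $2$, and this fibre has cardinality exactly $2$. The unique special word is $w_n=\head_n(e^{(l)})$ for the unique pair $(i,l)$ with $n=m_i+l$, and its two continuations are $\head_{n+1}(e^{(l)})$ and $\head_{n+1}(\tilde d^{(i,l)})$.

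For P4, view $D$ as the set of infinite paths through the tree $T$ whose level-$n$ vertices are $\mathrm{Head}_n$. Because $\overline{1}=e^{(0)}=d^{(i,0)}$ for every $i\ge 0$, branchings occur along $\overline{1}$'s path at every level $n=2^i$. A Cantor-style coding then produces $2^{\aleph_0}$ distinct paths: given $\sigma\in\{0,1\}^{\N}$, construct the path that at the branching at level $2^k$ takes the ``new'' branch (into $\tilde d^{(k,0)}$'s tail $\overline{3^{2^k}}$) whenever $\sigma(k)=1$ and otherwise remains on $\overline{1}$'s path. If $\sigma\neq\sigma'$ first disagree at index $k$, the two coded paths differ in coordinate $2^k+1$ (one entry is $3^{2^k}$, the other is $1$), and by P1 this disagreement propagates to all larger coordinates. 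So $\sigma\mapsto z^\sigma$ is injective, giving $|D|\ge 2^{\aleph_0}$. I expect the main technical obstacle to lie in the orbit-disjointness half of P2: although the powers-of-$3$ structure morally forbids any nonzero integer shift, rigorously verifying this through every possible carry pattern in the mixed-radix expansion, while bookkeeping the interplay with P1, requires care.
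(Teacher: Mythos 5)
Your treatment of P1, P3 and the second clause of P2 is essentially sound (it fleshes out what the paper dismisses as ``direct inspection'' and the head-count $|\mathrm{Head}_m|=m$), but the induction you propose for the orbit half of P2 has a genuine gap at the inductive step. Knowing that $\head_{n-1}(z)=\head_{n-1}(z+t)$ and that both $z_n$ and $(z+t)_n$ lie in $\{3^0,\ldots,3^{n-1}\}$ does \emph{not} force $a_n=0$: precisely when the common head of length $n-1$ is the special word $w_{n-1}$, two distinct $n$-th digits are compatible with it (this is exactly the branching you exploit in P3 and P4), so the scenario $z_n=3^{k''}$, $(z+t)_n=3^{n-1}$, $a_n=3^{n-1}-3^{k''}\neq 0$, $c_n=0$ satisfies every constraint you cite. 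To rule it out you must use that $t$ is an integer: beyond the last nonzero digit of $t$ its digits vanish, and since every digit of an element of $D$ is at most $3^{n-1}<4^n-1$ the carries die out, so $z$ and $z+t$ agree on all sufficiently large coordinates; P1 then gives $\head_n(z)=\head_n(z+t)$ for all large $n$, i.e.\ $z=z+t$, hence $t=0$. This top-down tail-equivalence argument is the paper's route (distinct elements of $D$ cannot be tail equivalent, by P1, and adding a nonzero integer to a point of $D$ yields a tail-equivalent point); your bottom-up digit-forcing cannot close without that ingredient.

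The Cantor coding in P4 is also broken. By your own identification of the special words, every branching at a level of the form $2^k$ sits at the all-ones vertex $\head_{2^k}(\overline{3^0})$. Once the coded path has taken the ``new'' branch at the first index $k_0$ with $\sigma(k_0)=1$, its digit at coordinate $2^{k_0}+1$ is $3^{2^{k_0}}\neq 1$, and by P1 all of its later digits differ from those of $\overline{3^0}$; hence it never again passes through an all-ones head, and the instruction ``take the branch into $\tilde d^{(k,0)}$'s tail at level $2^k$'' cannot be executed at any later index. So $z^\sigma$ is not defined as an element of $D$ for any $\sigma$ containing two or more ones, and the claimed injection of $\{0,1\}^{\N}$ into $D$ degenerates to countably many points. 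After leaving $\overline{3^0}$'s path at level $n_j$, the next available branchings occur at levels $2^i+n_j$ with $2^i>n_j$, so a correct coding must follow these nested levels; alternatively, argue as the paper does that no point of any $D^i$ is isolated in $D$, so $D$ is a nonempty compact set without isolated points and therefore uncountable. (A cosmetic point: the appeal to the multiplicative order of $3$ modulo $4^n$ is unnecessary, and false for $n=1$; the inequality $3^0<3^1<\cdots<3^{n-1}<4^n$ already makes these powers distinct modulo $4^n$.)
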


\begin{proof}
P1 is established by direct inspection of the sets $D^i$.

As for the first part of P2, note that distinct elements of $D$ cannot be tail equivalent, due to P1. 
The second part is immediate.

To see P3, notice that $\mathrm{Head}_m$ has exactly $m$ elements, namely $\mathrm{Head}_{2^i+l}$ consists of the first $2^i+l$ elements of $D^{i+1}$, $l=0,\cdots, 2^i-1$. 
Hence $q$ is $1$-$1$ on all but one element.

Finally, P4 follows since no point of $D^i$ remains isolated (in $D$) so that
$D$ is a non-empty compact set without isolated points. 
Such a set is uncountable.
\end{proof}

\begin{lem}\label{lem: D+z is disjoint from D}
 For all $z\in \mc Z\setminus \Z$, there is at most one $d\in D$ and one $t\in \Z$ such that
 $d+t+z\in D$. In particular, for all $z\in \mc Z$, there is at most one $t\in\Z$ such that
 $(D+z+t)\cap D\neq\emptyset$.  
\end{lem}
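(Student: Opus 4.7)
The second assertion follows from the first together with the direct observation that for $z \in \Z$ the statement reduces to P2. For the first, the plan is to argue by contradiction: suppose $(d_1, t_1) \neq (d_2, t_2) \in D \times \Z$ both satisfy $d'_i := d_i + t_i + z \in D$, with $z \notin \Z$. Subtracting the two defining equations yields
\[
 d_1 - d'_1 - d_2 + d'_2 \;=\; t_2 - t_1 \;\in\; \Z.
\]
Three configurations are dispatched directly by P2: if $d_1 = d_2$ then $d'_1 - d'_2 \in \Z$, so by P2 $d'_1 = d'_2$ and $t_1 = t_2$, contradicting $(d_1, t_1) \neq (d_2, t_2)$; and if $d_1 = d'_1$ (or symmetrically $d_2 = d'_2$), the original equation gives $z = -t_1 \in \Z$ (resp.\ $z = -t_2 \in \Z$), contradicting $z \notin \Z$. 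So I may assume $d_1 \neq d_2$, $d_1 \neq d'_1$ and $d_2 \neq d'_2$, and the goal is to rule out this configuration.

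For $d \in \bigcup_j D^j$ I would introduce a \emph{tail exponent} $m(d) \in \N_0$: by construction $\tilde d^{(i,l)}$ has $d_n = 3^{m_i + l}$ (as an integer fitting in $\Z/\ell_n\Z$) at every position $n > m_i + l$, so set $m(\tilde d^{(i,l)}) := m_i + l$ and $m(\overline{3^0}) := 0$. A direct induction on the recursion shows that $m$ is a bijection onto $\N_0$. Writing $L_N = \prod_{k=1}^N \ell_k$ and $T_{m,N} := \sum_{k=m+1}^N L_{k-1}$, the mixed-radix expansion gives $d \bmod L_N = H(d) + 3^{m(d)} T_{m(d), N}$ for $N > m(d)$, with a fixed integer head contribution $H(d)$. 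Plugging this into the displayed identity taken modulo $L_N$, with $M := \max\{m(d_i), m(d'_i)\}$ and using $T_{m, N} = T_{M, N} + c_m$ for constants $c_m$ independent of $N$, one extracts
\[
 A \cdot T_{M, N} \;\equiv\; K \pmod{L_N}, \quad\text{where}\quad A = 3^{m(d_1)} + 3^{m(d'_2)} - 3^{m(d'_1)} - 3^{m(d_2)}
\]
and $K \in \Z$ is independent of $N$. Since $T_{M, N} \leq 2 L_{N-1}$ while $L_N = 4^N L_{N-1}$, the quantity $|A \cdot T_{M, N} - K|$ eventually drops below $L_N$, upgrading the congruence to the identity $A \cdot T_{M, N} = K$ for all large $N$; strict monotonicity of $T_{M, N}$ then forces $A = 0$. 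A $3$-adic valuation argument proves that any positive integer admits a unique representation as a sum of two powers of $3$, so $A = 0$ yields $\{m(d_1), m(d'_2)\} = \{m(d'_1), m(d_2)\}$ as multisets. Injectivity of $m$ then forces either $d_1 = d'_1$ or $d_1 = d_2$, both contradicting the standing assumptions.

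The main obstacle will be extending this argument to the uncountably many limit points $d^* \in D \setminus \bigcup_j D^j$. For such $d^*$, any approximating sequence $d^{(k)} \in \bigcup_j D^j$ with $d^{(k)} \to d^*$ necessarily has $m(d^{(k)}) \to \infty$, and the tail contribution $3^{m(d^{(k)})} R_{m(d^{(k)})}$ with $R_m := \sum_{j \geq m} L_j$ vanishes $2$-adically because $v_2(R_m) = m(m+1) \to \infty$; so $d^*$ is the $2$-adic limit of its integer head contributions and the clean tail-exponent formalism does not survive the passage to the limit. I would handle this case by a levelwise approximation: the modular identity holds for every $N$, and for each $N$ one can pick approximants from $\bigcup_j D^j$ agreeing with each $d_i, d'_i$ on the first $N$ digits and apply the computation above. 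The fact that every digit of every element of $D$ is a power of $3$ (a closed condition preserved under limits), combined with the restricted head structure from P3---there are only $n$ heads of length $n$---forces an analogous identity $3^{\kappa_1} + 3^{\kappa'_2} = 3^{\kappa'_1} + 3^{\kappa_2}$ in $\Z_2$ for suitable generalised exponents $\kappa \in \Z_2$, and the injectivity of the continuous map $k \mapsto 3^k$ on $\Z_2$ (a consequence of $v_2(3^{2^N} - 1) = N+2 \to \infty$) closes the argument. The careful $2$-adic bookkeeping of carries and the sum-of-two-powers uniqueness in $\Z_2$ is where the proof becomes genuinely delicate.
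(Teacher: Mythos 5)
Your reduction of the second assertion to the first, and your opening case analysis (disposing of $d_1=d_2$ and $d_i=d_i'$ via P2), are fine and match the paper. The arithmetic you then set up, however, only functions on the countable set $\bigcup_j D^j$ of eventually constant sequences, where a global tail exponent $m(d)$ exists; there your identity $A\cdot T_{M,N}\equiv K \pmod{L_N}$ with $N$-independent $A,K$, the growth of $L_N$, and uniqueness of two-term sums of powers of $3$ do force a contradiction. But the lemma concerns all of $D=\overline{\bigcup_i D^i}$, and the uncountably many limit points---precisely the points that make $D$ uncountable and the lemma nontrivial---carry no tail exponent. Your proposed repair does not close this gap: in the levelwise approximation the quantities $A$, $K$ and $M$ all depend on $N$ (the approximants' tail exponents tend to infinity), so the stabilisation step ``$A\,T_{M,N}=K$ for all large $N$, hence $A=0$'' no longer applies; and the assertion that the digits of a general $d\in D$ are of the form $3^{\kappa}$ for a single ``generalised exponent'' $\kappa\in\Z_2$, so that injectivity of $k\mapsto 3^k$ could be invoked, is unsubstantiated---nothing in the construction of $D$, nor in P3, produces such a $\kappa$. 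As written, the main case is a speculative sketch, so this is a genuine gap.

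The paper avoids global exponents entirely and argues digit by digit, which covers every element of $D$ at once. After reducing to $t,t'\ge 0$ (replace $z$ by $z-\min\{t,t'\}$), the digits of $t,t'$ vanish beyond some level $M$. Writing $d=(3^{p_n})$, $d'=(3^{p'_n})$, $d+t+z=(3^{q_n})$, $d'+t'+z=(3^{q'_n})$ and using the bound $d_n\le 3^{n-1}$ (valid for all of $D$, limit points included), one shows that beyond $M$ the carries of the two additions agree: a carry in one addition forces $z_{n-1}\ge 4^{n-1}-3^{n-2}-1$, which would make the corresponding digit of the other sum exceed $3^{n-2}$, impossible for an element of $D$. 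Then P1, together with $z\notin\Z$ and $d\neq d'$, guarantees that for all large $n$ one has $p_n\neq q_n$ and $p_n\neq p'_n$, and comparing the two digit equations at a single such $n$ gives $3^{p_n}+3^{q'_n}=3^{p'_n}+3^{q_n}$, contradicting the uniqueness of representations as a sum of two powers of $3$. This coordinatewise carry control is the ingredient your argument is missing; once you have it, the sum-of-two-powers trick you already use finishes the proof at the level of a single digit, with no need for $2$-adic exponentials or a passage to the limit.
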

\begin{proof} Let $z\in \mc Z\setminus \Z$. 
Suppose first that the condition $d+t+z \in D$ and $d+t'+z \in D$ can be satisfied by one $d\in D$ but perhaps two distinct $t,t'$. Then the orbit of $d+t+z$ intersects $D$ twice which contradicts Property~P2.
 
Now suppose there are distinct $d$ and $d'$ in $D$ 
and $t,t'\in \Z$ with both $d+t+z$ and $d'+t'+z\in D$.
Any $d\in D$ is a sequence of powers of $3$, $d=(3^{p_n})_{n\in \N}$
with $p_n\leq n-1$ and hence 
\begin{align}\label{eq: maximal coordinate}
d_n\leq 3^{n-1} \quad\text { for each } n\in \N \text{ and all } d\in D. 
\end{align}
We write
 \[d=(3^{p_n})_{n\in \N}, \,\, d'=(3^{p'_n})_{n\in \N}, \,\,d+t+z=(3^{q_n}),\,\,
 d'+t'+z=(3^{q_n'})\, .\]
Suppose first that $t,t'\geq 0$. Then there is $M$ such that for all $n\geq M$ we have 
$z_{n-1}(t)=z_{n-1}(t')=0$ (recall that $z_n(t)$ is the $n$-th entry of $t$ understood as an element of $\mc Z$). 
Let $c_n$ and $c'_n$ be the carry over from the $n-1$-th 
to the $n$-th coordinate in the addition of
$d$, $t$, $z$ and the addition of $d'$, $t'$, $z'$, respectively. We claim that $c_n=c'_n$ if $n\geq M$. 
Suppose for a contradiction that $c_n=1$ while $c'_n=0$. 
Then $c_n=1$ means that $d_{n-1}+z_{n-1}(t)+z_{n-1}+c_{n-1}\geq 4^{n-1}$ 
while $c'_n=0$ means that $d'_{n-1}+z_{n-1}(t')+z_{n-1}+c'_{n-1} = 3^{q'_{n-1}}$.
The first inequality together with \eqref{eq: maximal coordinate}
implies $z_{n-1}\geq 4^{n-1}-3^{n-2}-1$. We thus get
\[
3^{q_{n-1}'}=d_{n-1}'+z_{n-1}(t')+z_{n-1}+c_{n-1}'\geq d_{n-1}'+4^{n-1}-3^{n-2}-1>3^{n-2}
\]
which contradicts the assumption that $(3^{q'_n})\in D$ because of \eqref{eq: maximal coordinate}.
It follows that, indeed, $c_n=c'_n$ if $n\geq M$.

Since $z\not\in \Z$ we have $d\neq d+t+z$ for all $t\in\Z$.  
As also $d\neq d'$, 
Property~P1 implies that there is $M'$ such that $p_n\neq q_n$ and  $p_n\neq p'_n$
for all $n\geq M'$. 
Choose $n$ larger than $M$ and $M'$. 
As $z_{n}(t)=z_{n}(t')=0$, we get 
$ 3^{q_n} = d_n+z_n+c_n = 3^{p_n} + z_n + c_n$ and likewise
$ 3^{q'_n} = 3^{p'_n} + z_n + c'_n$. Using $c_n=c'_n$ this gives
\[
 3^{p_n}+3^{q_n'}=3^{p_n'}+3^{q_n}
\]
which contradicts the assumption that $p_n\notin \{q_n, p_n'\}$.
Hence, the first statement follows for $t,t'\geq 0$. 
Now if $s=\min\{t,t'\}<0$, we replace $z$ in the above argument by $z-s$ to conclude. 

To prove the second statement, suppose that  $(D+z+t)\cap D\neq\emptyset$. This means that there exists $d\in D$ such that $d+z+t\in D$. 
If $z\in\Z$, then Property~P2
implies $t=-z$. Otherwise $d$ and $t$ are uniquely determined by $z$ according to the first statement. In particular $(D+z+t')\cap D\neq\emptyset$ implies $t'=t$. 
\end{proof}

\subsection{The semicocycle and its extension}
We next turn to the construction and discussion of the desired semicocycle extension.
Define $ f\: \mc Z\setminus D \to \{a,b\}$ by
\begin{equation}\label{eq-semi-I}
 f(z) =
 \begin{cases}
  a & \text{if  $L(z,D)$ is odd}\\
  b & \text{otherwise}.
 \end{cases}
\end{equation}
In other words, $f(z)=a$ if the largest possible head-overlap an element $d\in D$ can have with $z$ has odd length.

\begin{lem}
The function $f$ is continuous.
Accordingly, given a point $z\in \mc Z$ with $z+\Z\cap D=\emptyset$, we have
that $f$ is a semicocycle over $(\mc Z,+1,z)$.

Furthermore, the set of discontinuities of this semicocycle coincides with $D$ so that $f$ is separating. 
\end{lem}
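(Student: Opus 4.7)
The plan is to prove the three sub-claims in turn, with all reductions controlled by the combinatorics of $D$ captured in Proposition~\ref{prop: essential properties of D} and Lemma~\ref{lem: D+z is disjoint from D}.

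First, I would establish continuity of $f$ on $\mc Z\setminus D$. The key observation is that $L(z,D)<\infty$ whenever $z\notin D$: otherwise some sequence in $D$ would share arbitrarily long heads with $z$, hence converge to $z$ in the product topology, forcing $z\in D$ by closedness. Setting $m:=L(z,D)$, the cylinder $U_{\head_{m+1}(z)}$ is a neighbourhood of $z$ on which $f$ is constant, because every $z'$ in it satisfies both $\head_{m+1}(z')=\head_{m+1}(z)\notin\mathrm{Head}_{m+1}$ (so $L(z',D)\le m$) and $\head_m(z')\in\mathrm{Head}_m$ (so $L(z',D)\ge m$). The semicocycle claim for any $z$ with $(z+\Z)\cap D=\emptyset$ is then immediate.

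Next I would identify the discontinuity set $D_f$ with $D$. Continuity automatically gives $D_f\subset D$. For the reverse inclusion, fix $w\in D$ and any $m\in\N$: by property P3 at most two extensions of $\head_m(w)$ lie in $\mathrm{Head}_{m+1}$, while the cylinder $U_{\head_m(w)}$ offers $4^{m+1}$ choices for the $(m+1)$-th coordinate. I would select $z'\in U_{\head_m(w)}\setminus D$ with $\head_{m+1}(z')\notin\mathrm{Head}_{m+1}$, yielding $L(z',D)=m$; choosing $m$ of the desired parity shows that both values of $f$ occur in every neighbourhood of $w$, so $w\in D_f$.

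Finally, for the separating property, suppose $y\in\mc Z$ stabilises the graph closure $F$. Then $D=D_f$ must be $y$-invariant, i.e.\ $y+D=D$. If $y\notin\Z$, Lemma~\ref{lem: D+z is disjoint from D} applied with $t=0$ allows at most one $d\in D$ with $d+y\in D$, contradicting the uncountability of $D$ (property P4). Hence $y\in\Z$, and property P2 (each $\Z$-orbit hits $D$ at most once) then forces $y=0$.

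The only mildly delicate point I foresee is the parity-controlled approximation used in the second step: it hinges on P3 ensuring that each cylinder $U_{\head_m(w)}$ around $w$ still contains continuity points of $f$ with $L(\cdot,D)$ of either prescribed parity. Once that combinatorial counting is secured, all three parts of the lemma fall out cleanly.
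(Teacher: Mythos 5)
Your proof is correct and takes essentially the same route as the paper's (much terser) argument: continuity and the identification of the discontinuity set with $D$ both come from the local behaviour of $L(\cdot,D)$ on cylinders, and the separating property is deduced from Lemma~\ref{lem: D+z is disjoint from D} together with P2 and P4, exactly as in the paper. Your parity-controlled approximants built via P3 merely spell out what the paper compresses into ``$L(\cdot,w)$ cannot be continuously extended''; the only step you leave tacit --- passing from arbitrary nearby continuity points to points of the dense orbit when concluding $w\in D_f$ --- is immediate because $f$ is locally constant off $D$.
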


\begin{proof}
Given $w\in \mc Z$, the function $z\mapsto L(z,w)$ is continuous on 
$\mc Z\setminus \{w\}$ and cannot be continuously extended to all of $\mc Z$. 
This gives that $f$ is continuous and likewise that $D$ 
is the set of discontinuities of the respective semicocycle over $(\mc Z,+1,z)$ for any appropriate $z$.

To prove that $f$ is separating, let $d, d'\in D$ and assume that $D+z=D$ with $z\notin\Z$. 
Then $d+z\in D$ and $d'+z\in D$ which by Lemma~\ref{lem: D+z is disjoint from D} 
implies $d=d'$. But $D$ is not a singleton, a contradiction.
By the same lemma $(D+t)\cap D \neq \emptyset$ with $t\in \Z$ implies $t=0$.
Hence the stabiliser in $\Z_{(4^n)}$ of $D$ is trivial.
\end{proof}
As dicussed in Section~\ref{sec: semicocycle extensions}, we hence obtain a semicocycle extension $(X_f,\sigma)$ of $(\mc Z,+1)$.
 
\subsection{Absence of independence}

By Remark~\ref{rem: tameness of shifts with finite alphabet}, $(X_f,\sigma)$ is forward tame if the cylinder sets $[a]$, $[b]$ do not have an infinite independence set in $\N$. This is what we now show.

As before, given $t\in \N$, we let $\ldots z_2(t)z_1(t)$ denote its representation in $\mc Z$.
Define
\begin{align*}
 m(t)&=\min\{n\in \N\: z_n(t)\neq 0\},\\
M(t) & = \min\{n\in \N\:  z_{m}(t)=0 \text{ for all } m\geq n \}.
 \end{align*}
\begin{lem}\label{lem:head-different}
Let $t\in \N$ and $z\in D$. For all $z'\in D$ we have $\head_{M(t)+1}(z+t) \neq \head_{M(t)+1}(z')$.
In particular, $L(z+t,D)\leq M(t)$.
\end{lem}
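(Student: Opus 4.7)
The plan is to track the coordinate-wise addition $z+t$ carefully around position $M(t)$, exploiting the structural information the paper has already extracted about $D$ (notably the bound $d_n \leq 3^{n-1}$ in \eqref{eq: maximal coordinate}, property P1, and the fact that $z_{M(t)-1}(t) \neq 0$ by minimality of $M(t)$). Let $c_n \in \{0,1\}$ denote the carry into position $n$ when computing $z+t$. For $n \geq M(t)$ we have $z_n(t)=0$, and since $z_n+c_n \leq 3^{n-1}+1 < 4^n$, no further carries are created; in particular $c_{M(t)+1}=0$, which yields
\[
(z+t)_{M(t)+1} \;=\; z_{M(t)+1}\quad\text{and}\quad (z+t)_{M(t)} \;=\; z_{M(t)}+c_{M(t)}.
\]

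Now assume, for contradiction, that some $z' \in D$ satisfies $\head_{M(t)+1}(z+t) = \head_{M(t)+1}(z')$. Comparing entries at position $M(t)+1$ gives $z'_{M(t)+1} = z_{M(t)+1}$, and property P1 then forces $\head_{M(t)+1}(z') = \head_{M(t)+1}(z)$. Combining, we obtain $\head_{M(t)+1}(z+t) = \head_{M(t)+1}(z)$, and in particular $(z+t)_{M(t)} = z_{M(t)}$.

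I will split on $c_{M(t)}$. If $c_{M(t)} = 1$, then $(z+t)_{M(t)} = z_{M(t)}+1 \neq z_{M(t)}$ (no modular reduction, as above), an immediate contradiction. If $c_{M(t)} = 0$, the head equality propagates down: in particular $(z+t)_{M(t)-1} = z_{M(t)-1}$, i.e.\ $z_{M(t)-1}(t) + c_{M(t)-1} \equiv 0 \pmod{4^{M(t)-1}}$. Since $z_{M(t)-1}(t) \neq 0$ and $c_{M(t)-1}\in\{0,1\}$, the only way this can hold is $z_{M(t)-1}(t)+c_{M(t)-1} = 4^{M(t)-1}$, which produces a carry $c_{M(t)}=1$, contradicting the case assumption. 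The ``In particular'' clause $L(z+t,D) \leq M(t)$ then follows at once from the definition of $L$.

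The argument is essentially pure bookkeeping; the only mildly delicate point is ensuring that carries cannot conspire to mask the discrepancy at position $M(t)$, and this is where the growth bound $d_n \leq 3^{n-1}$ (far below $4^n$) and property P1 of $D$ do all the work. No additional structural facts beyond those listed in Proposition~\ref{prop: essential properties of D} and the construction of $D$ are needed.
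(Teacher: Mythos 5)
Your proof is correct and follows essentially the same route as the paper's: you show the carry into position $M(t)+1$ vanishes (so $(z+t)_{M(t)+1}=z_{M(t)+1}$), compare $(M(t)+1)$-st digits and invoke P1 to conclude $\head_{M(t)+1}(z+t)=\head_{M(t)+1}(z)$. The only difference is cosmetic and lies in the endgame: the paper simply observes that this head equality, combined with $(z+t)_l=z_l$ for all $l\geq M(t)+1$, forces $z+t=z$ and hence $t=0$, a contradiction, whereas you extract the contradiction locally through a case analysis on the carry $c_{M(t)}$ at positions $M(t)$ and $M(t)-1$ (and use the bound $d_n\leq 3^{n-1}$ where the paper uses P2) --- both are valid.
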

\begin{proof}
When adding $t$ to $z$, denote by $c_l$ the carry over from the $l-1$-st entry to the $l$-th entry. These carry overs are determined successively, starting with $l=2$. However, whatever the carry over $c_{M(t)}$ is, as $z_{M(t)}(t)=0$ and $z_{M(t)}<4^{M(t)}-1$, 
the carry over
 $c_{M(t)+1}$ must be $0$. Hence $z_l = (z+t)_l$ for all $l\geq M(t)+1$.
 
Suppose for a contradiction that $\head_{M(t)+1}(z+t) = \head_{M(t)+1}(z')$ for some $z'\in D$. Then, by the above, $z_{M(t)+1} = (z+t)_{M(t)+1} = z'_{M(t)+1}$. As $z,z'\in D$ this implies that $\head_{M(t)+1}(z) = \head_{M(t)+1}(z')$ (Property~P1). Hence, $\head_{M(t)+1}(z) = \head_{M(t)+1}(z+t)$ so that $z_l = (z+t)_l$ for all $l\leq M(t)+1$. This implies $z+t=z$, a contradiction as $t>0$.
\end{proof}
\begin{cor}\label{cor-head1}
Let $t\in \N$ and $z,z'\in D$. If $\head_{M(t)}(z)=\head_{M(t)}(z')$ then $L(z+t,D) = L(z'+t,D)$.
\end{cor}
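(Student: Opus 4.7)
The plan is to deduce this corollary as a fairly direct consequence of Lemma~\ref{lem:head-different} together with the observation that adding $t$ only changes coordinates in a bounded range. More precisely, the key fact I want to extract from the proof of Lemma~\ref{lem:head-different} is that when one adds $t$ to any $z\in\mc Z$, no carry can propagate past position $M(t)$: indeed, the digit $z_n(t)$ is zero for all $n\geq M(t)$, and an element of $D$ never attains the value $4^n-1$ by Property~P2, so the carry $c_{M(t)+1}$ is forced to vanish (the same point as in Lemma~\ref{lem:head-different}). Consequently, the entries $(z+t)_l$ for $l\leq M(t)$ depend only on $t$ and on $\head_{M(t)}(z)$.

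Applying this to $z,z'\in D$ sharing the same head of length $M(t)$, I would conclude $(z+t)_l=(z'+t)_l$ for every $l\in\{1,\dots,M(t)\}$, i.e., $\head_{M(t)}(z+t)=\head_{M(t)}(z'+t)$.

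Now fix an arbitrary $w\in D$ and compare $L(z+t,w)$ with $L(z'+t,w)$. Lemma~\ref{lem:head-different} gives $L(z+t,w)\leq M(t)$ and $L(z'+t,w)\leq M(t)$, so each of these common head lengths is determined by how $w$ matches $z+t$ respectively $z'+t$ in positions $1,\dots,M(t)$. Since $z+t$ and $z'+t$ agree on precisely those positions, the two values coincide. Taking the supremum over $w\in D$ then yields $L(z+t,D)=L(z'+t,D)$.

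I do not expect any real obstacle here; the statement is essentially a direct corollary packaging the ``no propagation past $M(t)$'' observation already present in the proof of Lemma~\ref{lem:head-different}. The only bit of care is to handle uniformly the case $L(\,\cdot\,,w)=M(t)$ (equality is legitimate because Lemma~\ref{lem:head-different} leaves this possibility open) and the case $L(\,\cdot\,,w)<M(t)$ (where the mismatch position is the same on both sides because the heads of length $M(t)$ of $z+t$ and $z'+t$ coincide).
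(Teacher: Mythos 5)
Your proof is correct and follows essentially the same route as the paper: both rest on the agreement $\head_{M(t)}(z+t)=\head_{M(t)}(z'+t)$ (automatic since carries propagate leftward, so positions $\leq M(t)$ of $z+t$ depend only on $t$ and $\head_{M(t)}(z)$) together with the bound $L(\cdot+t,D)\leq M(t)$ from Lemma~\ref{lem:head-different}. The only cosmetic difference is that you compare $L(z+t,w)$ and $L(z'+t,w)$ pointwise for each $w\in D$ and then take the supremum, whereas the paper argues directly at the level of $L(\cdot+t,D)$ via a case split on whether this supremum is below $M(t)$.
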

\begin{proof}
Suppose first that $L(z+t,D)<M(t)$. 
Then for all $z''\in D$ there is $n< M(t)$ with $z''_n \neq (z+t)_n$.  
As $\head_{M(t)}(z+t)= \head_{M(t)}(z'+t)$, this implies $L(z+t,z'') = L(z'+t,z'')$
and thus $L(z+t,D)=L(z'+t,D)$.
Likewise $L(z'+t,D)<M(t)$ implies $L(z+t,D)=L(z'+t,D)$. 
By Lemma \ref{lem:head-different}, the only other possibility is $L(z+t,D)=L(z'+t,D)=M(t)$.
\end{proof}

\begin{prop}\label{lem-head1}
Suppose $0<t_1<t_2$ and $M(t_1)<m(t_{2})$. 
Then there exists a choice function $\varphi\: \{1,2\}\to \{a,b\}$ which cannot be realised along $t_1,t_2$ by elements $x\in X_f$ with $\pi(x)\in D$, that is, there is no $x\in \pi^{-1}(D)$ with $x_{t_i}\in [\varphi(i)]$ for $i=1,2$ where $\pi$ denotes the factor map onto $(\mc Z,+1)$.
\end{prop}
\begin{proof}
We assume that $m(t_2)$ is even; the other case has a similar proof.

Let $z\in D$ and suppose that $w := \head_{m(t_2)-1}(z)$ is not the special element of $\mathrm{Head}_{m(t_2)-1}$. Let $z'\in D$. If $L(z,z')< m(t_2)-1$ then also  $L(z+t_2,z')< m(t_2)-1$ (as $\head_{m(t_2)-1}(z)=\head_{m(t_2)-1}(z+t_2)$). On the other hand, if 
$L(z,z')\geq m(t_2)-1$ then we must even have $L(z,z')\geq m(t_2)$. Indeed, 
as $w$ is not special, $\head_{m(t_2)-1}(z)=\head_{m(t_2)-1}(z')$ implies $\head_{m(t_2)}(z)=\head_{m(t_2)}(z')$. Now  
$L(z,z')\geq m(t_2)$ implies that $z'_{m(t_2)}\neq (z+t_2)_{m(t_2)}$ and hence
$L(z+t_2,z') \leq m(t_2)-1$. Hence the largest possible head-overlap an element of $D$ can have with $z+t_2$ is $m(t_2)-1$. 
On the other hand this overlap can be obtained by using $z'=z$, as $L(z,z+t_2)=m(t_2)-1$. 
Since  $m(t_2)-1$ is an odd number, we have $f(z+t_2)=a$ (observe that $f(z+t_2)$ is well-defined
as $z+t_2\notin D$ due to P2).
Therefore $f(z+t_2)=b$ implies that  $\head_{m(t_2)-1}(z) = w_{m(t_2)-1}$, the special element of $\mathrm{Head}_{m(t_2)-1}$.

Let $z,z'\in D$. Suppose that $f(z+t_2) = f(z'+t_2) = b$. We just saw that this implies that 
the heads of length $m(t_2)-1$ of
$z$ and $z'$ are both equal to the special element $w_{m(t_2)-1}$ so that, in particular, 
$\head_{m(t_2)-1}(z)= \head_{m(t_2)-1}(z')$. As $M(t_1)<m(t_2)$ this implies $\head_{M(t_1)}(z)=\head_{M(t_1)}( z')$
and hence, by Cor.~\ref{cor-head1},  
$f(z+t_1) = f(z'+t_1)$. 
This means that we have no choice for the first element: One of the two choice functions $\varphi(1)=a$, $\varphi(2)=b$, or $\varphi'(1)=b$, $\varphi'(2)=b$
cannot be realised along $t_1,t_2$ by elements of $\pi^{-1}(D)$.
\end{proof}

\begin{thm}\label{thm:tame-toeplitz}
 $(X_f,\sigma)$ is tame.
\end{thm}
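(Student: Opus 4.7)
The plan is to argue by contradiction: assume $(X_f,\sigma)$ is non-tame, so it is either forward or backward non-tame (Remark~\ref{rem: tameness of shifts with finite alphabet} lets me focus on the single pair of cylinder sets $[a],[b]$). First I would bring the independence sequence into a usable form by invoking Theorem~\ref{thm-reduction}(2), whose hypothesis is satisfied thanks to Lemma~\ref{lem: D+z is disjoint from D} (in fact with ``at most one'' in place of ``finitely many''). By the \emph{moreover} clause, the resulting independence sequence $(t_n)_{n\in\N}$ can be chosen \emph{strictly positive} with $t_n g\to 0$ in $\mc Z$, and with every choice function $\varphi\in\{a,b\}^\N$ realised along $(t_n)$ by some $y^\varphi\in\pi^{-1}(D)$. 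The positivity is essential because Proposition~\ref{lem-head1} is formulated only for positive times, and this flexibility is what avoids having to duplicate the combinatorial analysis for the backward case.

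Next I would translate the convergence $t_n g\to 0$ in $\Z_{(4^n)}$ into $m(t_n)\to\infty$ (valid because every base is strictly greater than $1$), and then extract a subsequence---still denoted $(t_n)$---satisfying $M(t_n)<m(t_{n+1})$ for each $n$; this is always possible because each $M(t_n)$ is finite. Property~P2 of Proposition~\ref{prop: essential properties of D} ensures that $\pi(y^\varphi)+t\notin D$ whenever $\pi(y^\varphi)\in D$ and $t\in\N\setminus\{0\}$, so $y^\varphi_{t_n}=f(\pi(y^\varphi)+t_n)=\varphi(n)$ is an unambiguous statement about the semicocycle $f$ itself.

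The contradiction then comes from applying Proposition~\ref{lem-head1} to the pair $J=\{t_1,t_2\}$, which by construction satisfies $0<t_1<t_2$ and $M(t_1)<m(t_2)$: this yields a choice function $\tilde\varphi\colon J\to\{a,b\}$ with the property that no $z\in D$ admits both $f(z+t_1)=\tilde\varphi(t_1)$ and $f(z+t_2)=\tilde\varphi(t_2)$. Extending $\tilde\varphi$ arbitrarily to some $\varphi\in\{a,b\}^\N$, the realising element $y^\varphi$ would supply exactly such a point $z=\pi(y^\varphi)\in D$, yielding the desired contradiction.

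The main obstacle I anticipate is organisational rather than mathematical: without the \emph{moreover} clause of Theorem~\ref{thm-reduction}(2) that allows prescribing the sign of $(t_n)$, one would have to prove a mirror version of Proposition~\ref{lem-head1} for negative times by redoing the carry analysis relative to $-D$. Given access to that clause, the remaining steps---the subsequence extraction and the final unwinding---reduce to routine bookkeeping.
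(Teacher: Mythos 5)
Your argument is correct and is essentially the paper's own proof: the same reduction via Theorem~\ref{thm-reduction}(2) (with its hypothesis supplied by Lemma~\ref{lem: D+z is disjoint from D}), the same observation that convergence of the independence times to $0$ in $\mc Z$ forces $m(t_n)\to\infty$, and the same contradiction with Proposition~\ref{lem-head1}. The one point where you diverge is the handling of the two time directions: the paper assumes backward non-tameness, uses the sign-choice in Theorem~\ref{thm-reduction}(2) to produce a strictly positive independence sequence and thus gets backward tameness, and then deduces forward tameness by flipping time, which replaces $D$ by $-D$ and needs the remark that the fundamental properties of $D$ (Proposition~\ref{prop: essential properties of D}, Lemma~\ref{lem: D+z is disjoint from D}) are preserved under inversion so that the analysis behind Proposition~\ref{lem-head1} still applies. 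You instead feed forward and backward non-tameness through the same \emph{moreover} clause so as to land on strictly positive times in either case, whence a single application of Proposition~\ref{lem-head1} covers both directions and the $d\mapsto -d$ symmetry is never invoked. This is legitimate --- the clause is established in Lemma~\ref{lem: existence 0-independence set} independently of the sign of the original independence set, and the paper itself uses it in exactly this way in the backward case --- and it is a mild streamlining; the only thing the paper's route buys is making the inversion symmetry of the construction explicit.
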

\begin{proof}
Suppose that $(X_f,\sigma)$ is non-tame.
Lemma~\ref{lem: D+z is disjoint from D} guarantees that the assumptions of the second part of   Theorem~\ref{thm-reduction} are satisfied so that we can then find an independence set $(\tau_n)\subset \N$ such that $\tau_n\to 0$ in $\mc Z$ and every choice function is realised 
along $(\tau_n)$ by elements of $D$. As $\tau_n\to 0$ we have $m(\tau_n) \to +\infty$. Hence we can find two elements $t_1 = \tau_{n_1}$ and $t_2 = \tau_{n_2}$ which satisfy $M(t_1)<m(t_{2})$. Proposition~\ref{lem-head1} says that not all choice functions for $(t_1,t_2)$ can be realised by elements of $D$. This contradicts the fact that all choice functions can be realised along $(\tau_n)$ by elements of $D$. Hence $(X_f,\sigma)$ is tame.
\end{proof}

\section{Toeplitz shifts with a unique singular orbit}
\label{sec: Toeplitz shifts with finitely many singular fibres}
In this section, we study a class of Toeplitz shifts 
which have a single orbit of singular fibres. 
While the set of singular points in the maximal equicontinuous factor is thus countable, the singular fibres themselves will be uncountable and this will give rise to the possibility that the shift is non-tame. 
In fact, this was already observed in \cite{FuhrmannKwietniak2020}.
However, by refining the construction from \cite{FuhrmannKwietniak2020}, we will show that
an uncountable singular fibre does not ensure non-tameness, just as uncountably many singular orbits don't ensure non-tameness, as we saw in Theorem \ref{thm:tame-toeplitz}.
Further, we will see  that minimal non-tame systems can still be forward (or likewise backward) tame.

Specifically, given any language $\mathcal L$ on a two-letter alphabet, 
we construct a binary Toeplitz shift $(X,\sigma)$, whose singular points are $\Z$, and  such that there is a positive sequence $(t_n)$ of integers with $\Sigma_{(t_n)}(\{0\})=\mathcal L$.  
Then, taking $\mathcal L$ to be the language of a Sturmian shift, or alternatively 
$\{a,b\}^{\N}$, we find that $X$ can be either forward tame, or not.
Independently of $\mc L$, however, all of the constructed systems will be backward tame.
\subsection{The semicocycle extension}
Consider a double sequence $(l^n_i)_{n\in \N,i\in\N_0}\subset \N_0$ with the following properties
\begin{itemize}
 \item[(R1)] \label{cond:R1} $(l_0^n)_{n\in \N}$ is strictly increasing.
 \item[(R2)] \label{cond:R2} For all $n\in \N$, $(l^n_i)_{i\in\N_0}$ is  strictly increasing.
 \item[(R3)] \label{cond:R3} For all $n\in\N$ there is $i_{n}\in \N_0$ such that $l^n_{i+i_{n}}=l^{n+1}_{2i}$.
 \end{itemize}
An example is given by
$$ l^1_i = 2^i-1,\quad l^{n+1}_{2i} = l^n_{i+2^{n-1}},\quad 
   l^{n+1}_{2i+1}  = \frac12(l^n_{i+2^{n-1}} + l^n_{i+1+2^{n-1}})  $$
with first values  
$$\begin{array}{cccccccccccc}
0 & 1 & 2 & 3 & 4 & 5 & 6 & 7 & 8 & 9 & 10 & 11\\
\hline
l^1_0 & l^1_1 &  & l^1_2  & & & & l^1_3 & & & & \\
& l^2_0 & l^2_1 & l^2_2 & &  l^2_3  & & l^2_4 & & & & l^2_5 \\
& & & l^3_0 & l^3_1 & l^3_2 &   l^3_3  &  l^3_4 & & l^3_5 & & l^3_6\\
& & & & & & & l^4_0 & l^4_1 & l^4_2 &   l^4_3  &  l^4_4 \\
\end{array} 
$$
Consider a right-extendable language $\mc L=\bigcup_{n\in \N} \mc L^n\subset \{a,b\}^\ast$ over the alphabet $\{a,b\}$ where $\mc L^n =\mc L\cap \{a,b\}^n$ and $\mc L^1=\{a,b\}$ (i.e., $\mc L$ is not trivial). 
Right-extendable means that any word $w\in \mc L^n$ has an extension to the right
$wc\in \mc L^{n+1}$ where $c\in \{a,b\}$.  
Similarly as in the previous section, if $w$ has two extensions in $\mc L^{n+1}$ it is called \emph{right special}. We denote the set $\{n,n+1,\cdots,m-1\}$  by $[n,m)$. 
\begin{lem}\label{lem-G1}
Let $(l^n_i)_{n\in \N,i\in\N_0}$ satisfy the conditions (R1)--(R3) above, and let $\mathcal L$ be a right-extendable binary language. 
Then there exists a family $(f^n)_{n\in \N}$ of functions $f^n:\N_0\to \{a,b\}$ which satisfy
the following conditions for all $N\in\N$.
\begin{enumerate}
 \item \label{item: defn w Ni}
 For each $i\in \N_0$ the function 
 $$ x\mapsto  f^1(x)f^2(x)\ldots f^N(x) \in \{a,b\}^{N}$$ is constant 
 on $[l^N_{i}, l^N_{i+1})$ and the constant value is a word $w^{(N,i)}\in \mc L^N$. 
 \item \label{item: L2} Each word $w\in \mc L^N$ arises infinitely often in the above way, that is, for each $i_0\in \N$ there is $i\geq i_0$ with 
 $w=w^{(N,i)}:=f^1(x)f^2(x)\ldots f^N(x)$ for $x\in [l^N_{i}, l^N_{i+1})$.
\end{enumerate}
\end{lem}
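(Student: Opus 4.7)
I will construct the sequence $(f^n)_{n\in \N}$ inductively on $n$, defining $f^{n+1}$ once $f^1,\ldots,f^n$ have been fixed and conditions (1)--(2) have been verified up to level $N=n$. The base case is immediate: pick any sequence $(c_i)_{i\geq 0}\in \{a,b\}^{\N_0}$ in which both letters appear infinitely often, set $f^1(x)=c_i$ on each $[l^1_i,l^1_{i+1})$, and let $f^1$ be arbitrary on $[0,l^1_0)$; since $\mc L^1=\{a,b\}$, conditions (1) and (2) at level $1$ hold trivially.

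The first step of the inductive construction is to read off the refinement structure implied by (R1)--(R3). By (R3) together with (R1), $l^{n+1}_0=l^n_{i_n}\geq l^n_0$; while (R2) combined with (R3) forces each level-$n$ interval $[l^n_{i_n+k},l^n_{i_n+k+1})$ to split into exactly two level-$(n+1)$ intervals, those indexed by $j=2k$ and $j=2k+1$, whereas the level-$n$ intervals $[l^n_i,l^n_{i+1})$ with $i<i_n$ lie inside $[l^n_0,l^{n+1}_0)$ and contain no level-$(n+1)$ subinterval. Thus, for every $j\geq 0$ there is a unique $i=i(j)\geq i_n$ with $[l^{n+1}_j,l^{n+1}_{j+1})\subseteq [l^n_{i(j)},l^n_{i(j)+1})$, and prescribing $f^{n+1}$ amounts to choosing a constant value $c_j\in\{a,b\}$ on each $[l^{n+1}_j,l^{n+1}_{j+1})$ -- the values on $[0,l^{n+1}_0)$ are immaterial, since this region meets no level-$N$ interval for $N\geq n+1$. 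Condition (1) at level $n+1$ translates into the combinatorial requirement $c_j\in E(w^{(n,i(j))})$, where $E(w):=\{c\in\{a,b\}:wc\in \mc L\}$; right-extendability of $\mc L$ guarantees $E(w)\neq\emptyset$ for every $w\in \mc L$, so an admissible $c_j$ always exists.

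The main obstacle is satisfying condition (2) at the new level, that is, ensuring that every $w'\in \mc L^{n+1}$ is realised as $w^{(n+1,j)}$ for infinitely many $j$. Here I would exploit the two independent choices of $f^{n+1}$ spawned by each split level-$n$ interval: by the inductive hypothesis $I_w:=\{i\geq i_n: w^{(n,i)}=w\}$ is infinite for every $w\in \mc L^n$, so one may partition $I_w$ into $|E(w)|$ infinite subsets $(I_{w,c})_{c\in E(w)}$ and, for each $i\in I_{w,c}$, assign $c_{2(i-i_n)}=c_{2(i-i_n)+1}=c$. Every word $wc\in \mc L^{n+1}$ with $w\in \mc L^n$ and $c\in E(w)$ is then realised as $w^{(n+1,j)}$ on the infinite index set $\{2(i-i_n),\,2(i-i_n)+1:i\in I_{w,c}\}$, delivering condition (2). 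Conditions at levels $N\leq n$ remain intact since they depend only on $f^1,\ldots,f^N$. Once (R1)--(R3) has been unpacked geometrically, the only remaining delicacy is this coordinated labelling of subintervals, and it is a routine distribution argument that uses only the infinitude of each $I_w$ and the non-emptiness of each $E(w)$.
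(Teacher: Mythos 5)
Your proposal is correct and follows essentially the same route as the paper: induction on the level, using (R3) to see that each level-$n$ interval beyond $l^{n}_{i_n}$ splits into exactly two level-$(n+1)$ intervals, and right-extendability to keep the concatenated values in $\mc L$. The only (cosmetic) difference is how condition (2) is arranged: the paper realises \emph{both} extensions of a right-special word within each split interval (putting $a$ on the first half and $b$ on the second), whereas you assign a single extension per occurrence and secure ``infinitely often'' by partitioning the infinite set of occurrences $I_w$ among the extensions — both schemes work and rest on the same observations.
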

\begin{proof}
Let $f^1$ be any function which takes infinitely often each of the values $a$ and  $b$ and is constant on all $[l^1_{i}, l^1_{i+1})$.
 Then the above is satisfied for $N=1$.

Now, suppose we have already defined $f^1,\ldots,f^N$ satisfying the above properties. Then define $f^{N+1}$ as follows: 
given $i\geq i_N\in \N_0$, let $w^{N,i}\in\mc L^N$ be the word such that
$w^{(N,i)}=f^1(x)f^2(x)\ldots f^N(x)$ for $x\in [l^N_{i}, l^N_{i+1})$. 
Let $j = i-i_N$.
If $w^{(N,i)}$ is right special, set
$$
 f^{N+1}(x)=
 \begin{cases}
  a & \text{ if } x\in [l_{2j}^{N+1},l_{2j+1}^{N+1}),\\
  b & \text{ if } x\in [l_{2j+1}^{N+1},l_{2j+2}^{N+1}),
 \end{cases}
$$
otherwise
$$ f^{N+1}(x)=  c \quad\text{ if } x\in [l_{2j}^{N+1},l_{2j+2}^{N+1})=[l_{i}^{N},l_{i+1}^{N}),$$
where $c$ is the unique letter extending $w^{(N,i)}$ to the right. 
This way we have defined $f^{N+1}(x)$ for all $x\geq l^N_{i_N}$. 
We extend it arbitrarily for lower values of $x$. 
It follows from condition (R3) and the assumptions on $f^1,\ldots, f^N$ that 
$x\mapsto f^1(x)f^2(x)\ldots f^N(x)f^{N+1}(x)$ is constant on the intervals 
$[l^{N+1}_{j}, l^{N+1}_{j+1})$ and takes a value in $\mc L^{N+1}$ there.
Furthermore, by induction, any word of $\mc L^{N+1}$ arises in this way; indeed
 $w^{(N,i)}c=f^1(x)f^2(x)\ldots f^N(x)f^{N+1}(x)$ for 
 $x\in [l^{N+1}_{2j}, l^{N+1}_{2j+2})$ where $c$ is the unique extension of $w^{(N,i)}$, if that exists, or $c=a$ ($c=b$) if $w^{(N,i)}$ is right special and 
 $x\in [l^{N+1}_{2j}, l^{N+1}_{2j+1})$ ($x\in [l^{N+1}_{2j+1}, l^{N+1}_{2j+2})$).
\end{proof}

We can take any odometer $(\mc Z,+1)$, but for concreteness we take $\mc Z=\Z_2$. 
Consider a strictly increasing sequence $(t_n)_{n\in \N}$ in $\Z$ such that $t_n \to 0$ in $\Z_2$. For instance, $t_n = 2^{l^n_0}$ is a good choice which we take.
 Its expansion in $\Z_2$ has zeros everywhere except at position $l^n_0$ where we have an entry $1$.
 Define the clopen neighbourhood
$$U_n(z) := U_{\head_{l^n_0}(z)}= \{z'\in\Z_2:\head_{l^n_0}(z) = \head_{l^n_0}(z')\}.$$
It follows from (R1) and our choice of $(t_n)$ that
\begin{align}\label{eq: disjoint supports}
 U_n(t_n)\cap U_{n'}(t_{n'})=\emptyset \text{ for all } n\neq n'\in \N.
\end{align} 
We set $D=\{t_n\:n\in \N\}\cup\{0\}$, a closed set, and define
$f\: \mc Z \setminus D \to \{a,b\}$ with the help of the maps $(f^n)$ from Lemma~\ref{lem-G1} by 
\begin{align}\label{eq: computing semicocycle no 2}
 f(z)=
 \begin{cases}
  f^n(L(z,t_n)) & \text { if } z \in U_n (t_n),\\
  a & \text{ otherwise},
 \end{cases}
\end{align}
where, as before, $L(z,t_n)$ is the length of the common head between $z$ and $t_n$. By \eqref{eq: disjoint supports}, $f$ is well-defined. 
Since all functions $f^n$ are continuous (trivially)
and $L$ is continuous away from the diagonal, $f$ is continuous. 
We fix $\hat z\notin \Z$ (so that $\hat z+\Z\ssq D^c$) and consider the restriction of $f$ to $\hat z+\Z$ an $\{a,b\}$-valued semicocycle over $(\Z_2,+1,\hat z)$. 
As $D$ is not periodic, its stabiliser in $\Z_2$ is trivial and therefore $f$ is separating.
Altogether, $f$ defines a semicocycle extension $(X_f,\sigma)$ of $(\Z_2,+1)$.

\subsection{Tameness or otherwise}
The set of discontinuity points of the semicocyle is $D_{f}=D\subset\Z$, and consequently, all non-integer fibres are regular. The question of whether $(X_f,\sigma)$ is tame or not is therefore a question about its fibre $\pi^{-1}(0)$. We now show that this fibre contains $X_\mathcal L$, the set of all unilateral sequences allowed by the language $\mc L$. 
More precisely, these unilateral sequences are precisely the subsequences of the sequences of $\pi^{-1}(0)$ along the times $(t_n)_n$.
\begin{prop}\label{prop-G2}
Consider the dynamical system $(X_f,\sigma)$ from above. 
For any $y\in  X_{\mathcal L}$ there exists
$ x\in \pi^{-1}(0)$ such that 
\begin{equation}\label{eq: defn x-to-the-y}
x_{t_n}=y_n \quad \text{ for all } n\in \N.
\end{equation}
Conversely, for any $ x\in \pi^{-1}(0)$ there is $y\in X_\mathcal L$ such that (\ref{eq: defn x-to-the-y}) holds true. 
\end{prop}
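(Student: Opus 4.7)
The entire argument should hinge on the observation that, whenever $\zeta\in\Z_2$ satisfies $L(\zeta,0)\geq l^n_0$, the single-bit $2$-adic addition gives
\[
L(\zeta+t_n,t_n)=L(\zeta,0),
\]
in which case \eqref{eq: computing semicocycle no 2} reduces to $f(\zeta+t_n)=f^n(L(\zeta,0))$. The rest of the proof will then be a dictionary between approximating sequences $\zeta_k\to 0$ in $\Z_2$ and the corresponding common-head lengths $L(\zeta_k,0)\to\infty$, with Lemma~\ref{lem-G1} translating these lengths into words of $\mc L$.

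For the forward direction I start with $y\in X_\mc L$. For each $N\in\N$, part~(\ref{item: L2}) of Lemma~\ref{lem-G1} furnishes arbitrarily large integers $L_N$ with $f^1(L_N)\cdots f^N(L_N)=y_1\cdots y_N$; since $\hat z\notin\Z$ I can pick $m_N\in\Z$ with $L(\hat z+m_N,0)=L_N$, and then $\zeta_N:=\hat z+m_N\to 0$ in $\Z_2$. Passing to a convergent subsequence of $(\sigma^{m_N}\hat f)_N$ in the compact space $X_f$ yields some $x\in X_f$; continuity of $\pi$ combined with $\pi(\sigma^{m_N}\hat f)=\zeta_N\to 0$ gives $\pi(x)=0$, and the key identity applied at each $n\leq N$ gives $f(\zeta_N+t_n)=f^n(L_N)=y_n$, so in the limit $x_{t_n}=y_n$.

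The converse runs in reverse. Any $x\in\pi^{-1}(0)$ arises as $x=\lim_k\sigma^{m_k}\hat f$ with $\zeta_k:=\hat z+m_k\to 0$, hence $L_k:=L(\zeta_k,0)\to\infty$. The key identity again yields $x_{t_n}=\lim_k f^n(L_k)$ for every $n$. Fixing $N$ and invoking part~(\ref{item: defn w Ni}) of Lemma~\ref{lem-G1}, the tuple $(f^1(L_k),\ldots,f^N(L_k))$ belongs to the finite set $\mc L^N$ for all sufficiently large $k$; its limit $(y_1,\ldots,y_N)$ therefore also lies in $\mc L^N$, and so $y\in X_\mc L$.

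The only non-bookkeeping step I expect to require care is the key identity $L(\zeta+t_n,t_n)=L(\zeta,0)$. The point is that $t_n=2^{l^n_0}$ has a single nonzero bit, so whenever $\head_{l^n_0+1}(\zeta)$ consists of zeros, adding $t_n$ flips exactly the bit at position $l^n_0+1$ without propagating a carry, and $\zeta+t_n$ agrees with $t_n$ on precisely those positions on which $\zeta$ agrees with $0$. I will also need to check that $\zeta_N+t_n$ lands in the correct neighbourhood $U_n(t_n)$ so that \eqref{eq: computing semicocycle no 2} triggers the $f^n$-branch, but this is immediate from $L(\zeta_N,0)\geq l^N_0\geq l^n_0$ for $n\leq N$ combined with the disjointness recorded in \eqref{eq: disjoint supports}.
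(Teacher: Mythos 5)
Your proposal is correct and follows essentially the same route as the paper: translate $\hat f$ by integers $m$ chosen so that $\hat z+m$ has a prescribed common head length with $0$, use Lemma~\ref{lem-G1} to read off the prescribed word at the positions $t_1,\dots,t_N$, pass to a limit point for the forward direction, and use part (1) of Lemma~\ref{lem-G1} together with $L(\zeta_k,0)\to\infty$ for the converse. The only cosmetic remark is that your ``key identity'' $L(\zeta+t_n,t_n)=L(\zeta,0)$ needs no case analysis on carries: agreement of heads of length $i$ is just congruence modulo $2^i$, which is translation invariant, so the hypothesis $L(\zeta,0)\geq l^n_0$ is only needed to place $\zeta+t_n$ in $U_n(t_n)$.
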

\begin{proof}
Fix $\hat z\in\Z_2\backslash \Z$ and consider the sequence $\hat f \in X_f$ given by 
$\hat f_n = f(\hat z + n)$. 
Given $N\in\N$ and $w\in \mc L^N$, 
pick $i\in\N_0$ such that $w=w^{(N,i)}$ as in Lemma~\ref{lem-G1} (\ref{item: L2}).
Choose $t_w \in \Z$ such that $L(t_w +\hat z,0) = l^N_{i}$ ($t_w$ has the same first $l^N_i$ digits as $-\hat z$ and then disagrees). Then also  $L(t_w +\hat z+t,t) = l^N_{i}$ for all $t\in \Z$. 
Hence, taking $t=t_n$ we deduce with \eqref{eq: computing semicocycle no 2} that
$$\hat f_{t_w+t_n} = f(t_w+ \hat z+t_n)= f^n(L(t_w+\hat z,0))=f^n(l_i^N) = w_n$$
for all $n=1,\ldots,N$.

In other words, the sequence $x=\sigma^{t_w}(\hat f)$ verifies
\begin{align}\label{eq: x along t1 t2...}
 x_{t_1}x_{t_2}\ldots x_{t_N}=f(t_w +\hat z +t_1)f(t_w +\hat z +t_2)\ldots f(t_w +\hat z +t_N)=w.
\end{align}
Now, given $y\in X_\mathcal L$, let $x^y$ be a limit 
point of $\{\sigma^{t_{y_1 y_2\ldots y_N}}(\hat f)\: N\in \N\}$. As
$L(t_{y_1 y_2\ldots y_N}+\hat z,0)\geq l^N_0 \stackrel{N\to +\infty}\longrightarrow +\infty$, 
the sequence $t_{y_1 y_2\ldots y_N}+\hat z$ tends to $0$ and thus $\pi(x^y) = 0$. 
Due to \eqref{eq: x along t1 t2...}, we further have
$ x^y_{t_n}=y_n$ for all $n=1,\cdots,N$ and $N\in \N$, i.e.\ (\ref{eq: defn x-to-the-y}). As $y\in X_\mathcal L$ was arbitrary, this shows the first statement. 
The second statement now follows with Lemma~\ref{lem-G1} 
(\ref{item: defn w Ni}).
\end{proof}
\begin{remark}
 The above idea of embedding a subshift $X_{\mc L}$ in the almost automorphic shift $X_f$ (as in the previous  proposition) is similar in spirit to Williams' classical constructions in \cite{Williams} despite the considerable differences on a technical level.
\end{remark}

\begin{thm}\label{thm: tame in forward but non-tame in general}
 $(X_f,\sigma)$ is backward tame.
 Further $(X_f,\sigma)$ is forward tame if and only if the $\N$-action $(X_{\mc L},\sigma)$ is tame.

\end{thm}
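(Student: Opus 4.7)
For the backward tameness claim, I would apply the backward analogue of Corollary \ref{cor: sufficient criterion for tameness}---whose proof carries over verbatim with $\N$ replaced by $-\N$---asserting that if $(z - \N) \cap D$ is finite for every $z \in \Z_2$, then $(X_f,\sigma)$ is backward tame. In our setting $D = \{0\} \cup \{t_n : n \in \N\} \subseteq \N_0$ and $t_n \to \infty$ in $\R$, so the intersection is empty for $z \notin \Z$ and finite for $z \in \Z$, which establishes backward tameness.

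For the easy direction of the forward equivalence, namely ``$X_{\mc L}$ non-tame $\Rightarrow$ $X_f$ forward non-tame'', I would use Proposition \ref{prop-G2} directly: given a forward independence sequence $(n_k) \subseteq \N$ for $([a],[b])$ in $X_{\mc L}$ and any $\varphi \in \{a,b\}^\N$, I pick a realiser $y^\varphi \in X_{\mc L}$ with $y^\varphi_{n_k} = \varphi(k)$ and lift it to $x^\varphi \in \pi^{-1}(0)$ via Proposition \ref{prop-G2} to obtain $x^\varphi_{t_{n_k}} = \varphi(k)$. Hence $(t_{n_k})$ is a forward independence sequence in $X_f$.

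For the converse, which is the main work, suppose $(\tau_n) \subseteq \N$ is a forward independence sequence for $([a],[b])$ in $X_f$; after passing to a subsequence I assume that $\tau_n \to z$ in $\Z_2$ and $\tau_n \to \infty$ in $\R$. The derived set of $D = \{0\} \cup \{t_n\}$ is $D' = \{0\}$, so Proposition \ref{prop: only derived set of discontinuities matters} forces all but at most countably many $\varphi$ to be realised by elements of $\pi^{-1}(-z)$. Since regular fibres are singletons I deduce $z \in \Z$, and shifting each realiser by $z$ produces $x'^\varphi \in \pi^{-1}(0)$ with $x'^\varphi_{s_n} = \varphi(n)$, where $s_n := \tau_n - z \in \N$ for $n$ large. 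Because $x'_m$ is uniquely determined for $x' \in \pi^{-1}(0)$ whenever $m \notin D$, the $s_n$ must lie in $D$ infinitely often---otherwise all but finitely many $\varphi(n)$ would be forced. A further passage to a subsequence gives $s_n = t_{k_n}$ with $(k_n)$ strictly increasing, and Proposition \ref{prop-G2} then associates to each $x'^\varphi$ a sequence $y^\varphi \in X_{\mc L}$ with $y^\varphi_{k_n} = \varphi(n)$ for all but countably many $\varphi$.

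The main obstacle will be upgrading ``co-countably many $\varphi$ realised in $X_{\mc L}$ along $(k_n)$'' to ``every $\varphi$ realised''. I would observe that the set $\tilde C$ of realisable choice functions is closed in $\{a,b\}^\N$---a pointwise limit of realised sequences is realised by compactness of $X_{\mc L}$---and then invoke the topological fact that a closed co-countable subset of the Cantor space $\{a,b\}^\N$ must equal the whole space, since every nonempty open subset of $\{a,b\}^\N$ is uncountable. Thus $(k_n)$ is an independence sequence for the $\N$-action on $X_{\mc L}$, contradicting its tameness.
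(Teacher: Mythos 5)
Your proposal is correct and follows essentially the same route as the paper: backward tameness via the backward analogue of Corollary~\ref{cor: sufficient criterion for tameness}, the easy direction via Proposition~\ref{prop-G2}, and the converse via the derived-set argument (Proposition~\ref{prop: only derived set of discontinuities matters}), the deduction $z\in\Z$, the ``infinitely many hits in $D$'' step (the paper's Lemma~\ref{lem-finite}), and Proposition~\ref{prop-G2}(2). The only cosmetic difference is that you upgrade from co-countably many to all choice functions at the end, inside $X_{\mc L}$, via the closed-plus-co-countable argument, whereas the paper performs this upgrade first in $X_f$ through Corollary~\ref{cor: realization only needs Df} and Theorem~\ref{thm-reduction}(1), using the same compactness/closedness mechanism (Proposition~\ref{prop: co-countable implies all sequences}).
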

\begin{proof}

To prove the first statement,
let $z\in \Z_2$. 
Its backward orbit $\{z-n : n\in\N_0\}$ can intersect at most finitely many positive integers $\Z^+\subset\Z_2$, hence only finitely many points of $D\subset \Z^+$.
It therefore follows from Corollary~\ref{cor: sufficient criterion for tameness} that if we restrict to the backward dynamics, then $(X_f,\sigma)$ is tame.

Suppose the $\N$-action $(X_{\mathcal L},\sigma)$ is non-tame. 
Then by Remark~\ref{rem: tameness of shifts with finite alphabet},
 there is a sequence of natural numbers $(\nu_n)$ such that for every choice function $\varphi\in \{a,b\}^\N$ there
 is $y^\varphi\in X_\mathcal L$ with ${y^\varphi}_{\nu_n}=\varphi(n)$ for each $n\in \N$.
By Proposition~\ref{prop-G2}, we find for every choice function $\varphi$ an element $x^\varphi\in X_f$ such that  $y^\varphi$ is the subsequence of $x^\varphi$ corresponding to the times $(t_n)_n$. Hence $(t_{\nu_n})_n$ is an independence set of $(X_f,\sigma)$ for the pair of cylinder sets $[a]$ and $[b]$.

Conversely, suppose that $(X_f,\sigma)$ is  forward non-tame. As $D'=\{0\}$, the first part of Theorem~\ref{thm-reduction} implies that 
there is an independence sequence $(\tau_n)$ for $\{a\},\{b\}$ which converges to $z\in\mc Z$ and such that all choice functions are realised by elements of the fibre $\pi^{-1}(-z)$, that is,  
 $\Sigma_{(\tau_n)}(\{0\})=\{a,b\}^\N$. In particular, $\pi^{-1}(-z)$ must be infinite and hence $z\in \Z$.

Now, Lemma~\ref{lem-finite} gives that if  
$\{\tau_n-z\:n\in \N\}\cap D$ was finite, then $\Sigma_{(\tau_n)}(\{z-z\})$ was finite, which is a contradiction.
Therefore, $\{\tau_n-z\:n\in \N\}\cap D$ is infinite. 
Thus there is a subsequence $(t_{n_j})_j$ such that $\{(t_{n_j})_j\}=\{\tau_n-z\:n\in \N\}\cap D$. 
Together with the second part of Proposition~\ref{prop-G2}, we see that 
$(n_j)$ is an independence set for $(X_{\mathcal L},\sigma)$.
\end{proof}

\bibliography{tameness-in-semi-cocycles}{}
\bibliographystyle{amsplain}

\end{document}